\title[Cobordism of Morse functions on surfaces and applications]
{Cobordism of Morse functions on surfaces,\\
the universal complex of singular fibers\\
and their application to map germs}
\author{Osamu Saeki}
\address{Faculty of Mathematics\\
Kyushu University\\\newline
Hakozaki\\Fukuoka 812-8581\\Japan}
\email{saeki@math.kyushu-u.ac.jp}
\urladdr{http://www.math.kyushu-u.ac.jp/~saeki/}
\def\S{Section }
\def\cnewtheorem#1[#2]#3{\newtheorem{#1}{#3}[section]
\expandafter\let\csname c@#1\endcsname\c@thm}
\newtheorem{thm}{Theorem}[section]
\theoremstyle{definition}
\theoremstyle{remark}
\numberwithin{equation}{section}
\newcommand{\Int}{\mathop{\mathrm{Int}}\nolimits}
\newcommand{\id}{\mathop{\mathrm{id}}\nolimits}
\def\spmapright#1{\smash{%
 \mathop{\hbox to 1.3cm{\rightarrowfill}}
  \limits^{#1}}}
\begin{document}

\begin{asciiabstract}
We give a new and simple proof for the computation of the oriented and
the unoriented fold cobordism groups of Morse functions on
surfaces. We also compute similar cobordism groups of Morse functions
based on simple stable maps of 3-manifolds into the plane.
Furthermore, we show that certain cohomology classes associated with
the universal complexes of singular fibers give complete invariants
for all these cobordism groups.  We also discuss invariants derived
from hypercohomologies of the universal homology complexes of singular
fibers. Finally, as an application of the theory of universal
complexes of singular fibers, we show that for generic smooth map
germs g: (R^3, 0) --> (R^2, 0) with R^2 being oriented, the algebraic
number of cusps appearing in a stable perturbation of g is a local
topological invariant of g.
\end{asciiabstract}

\begin{htmlabstract}
We give a new and simple proof for the computation of the oriented and
the unoriented fold cobordism groups of Morse functions on
surfaces. We also compute similar cobordism groups of Morse functions
based on simple stable maps of 3&ndash;manifolds into the
plane. Furthermore, we show that certain cohomology classes associated
with the universal complexes of singular fibers give complete
invariants for all these cobordism groups.  We also discuss invariants
derived from hypercohomologies of the universal homology complexes of
singular fibers. Finally, as an application of the theory of universal
complexes of singular fibers, we show that for generic smooth map
germs <i>g</i>: (<b>R</b>&sup3;, 0) &#x2192; (<b>R</b>&sup2;, 0) with
<b>R</b>&sup2; being oriented, the algebraic number of cusps appearing
in a stable perturbation of <i>g</i> is a local topological invariant of
<i>g</i>.
\end{htmlabstract}

\begin{abstract}
We give a new and simple proof for the computation of the oriented and
the unoriented fold cobordism groups of Morse functions on
surfaces. We also compute similar cobordism groups of Morse functions
based on simple stable maps of $3$--manifolds into the
plane. Furthermore, we show that certain cohomology classes associated
with the universal complexes of singular fibers give complete
invariants for all these cobordism groups.  We also discuss invariants
derived from hypercohomologies of the universal homology complexes of
singular fibers. Finally, as an application of the theory of universal
complexes of singular fibers, we show that for generic smooth map
germs $g \colon\, (\mathbb{R}^3, 0) \to (\mathbb{R}^2, 0)$ with
$\mathbb{R}^2$ being oriented, the algebraic number of cusps appearing
in a stable perturbation of $g$ is a local topological invariant of
$g$.
\end{abstract}

\maketitle

\section{Introduction}\label{section1}

In \cite{RS}, Rim\'anyi and Sz\H{u}cs
introduced the notion of a
cobordism for singular maps.
In fact, in the classical work
of Thom \cite{Thom}, one can find
the notion of a cobordism for
embeddings, and they naturally generalized
this concept to differentiable maps with
prescribed local and global singularities.
In particular, when the dimension of
the target manifold is greater than
or equal to that of the source manifold,
they described the cobordism groups in
terms of the homotopy groups of
a certain universal space by means of a
Pontrjagin--Thom type construction.

However, when the dimension of
the target is strictly smaller
than that of the source, their
method cannot be directly applied.
In \cite{Saeki932}, the author
defined the cobordism group
for maps with only definite fold
singularities, and used a geometric
argument in \cite{Saeki021} to show
that the cobordism group of
such \emph{functions} is isomorphic to
the $h$--cobordism
group of homotopy spheres (Kervaire--Milnor \cite{KM}). 
Recently this result was generalized for
\emph{maps} by Sadykov \cite{Sadykov}
with the aid of a Pontrjagin--Thom type construction.
This was possible, since the class of
singularities is quite restricted and the structure
of regular fibers of such maps is well-understood.

On the other hand, Ikegami and the
author \cite{IS} defined and studied the oriented
(fold) cobordism group of Morse
functions on surfaces. Since a (fold)
cobordism between Morse functions on
closed surfaces does not allow cusp singularities,
this cobordism group is rather bigger
than the usual cobordism
group of $2$--dimensional manifolds.
In fact, Ikegami and the author employed
a geometric argument using functions
on finite graphs to show that the
group is in fact an infinite cyclic group.
Recently, the structure of the unoriented cobordism
group of Morse functions on surfaces
was determined by Kalm\'ar \cite{Kalmar}
by using a similar method. In \cite{Ikegami}
Ikegami determined the structures of the oriented
and the unoriented cobordism groups
of Morse functions on manifolds of
arbitrary dimensions by using
an argument employing the cusp elimination technique
based on Levine \cite{Levine0}.

On the other hand, in an attempt to
construct a rich family of
cobordism invariants for 
maps with prescribed local
and global singularities in the
case where the dimension of the
target is strictly smaller than
that of the source, the author
considered singular fibers of
such maps and developed the theory
of universal complexes of singular
fibers \cite{Saeki04}.
The terminology ``singular fiber'' here
refers to a certain right-left
equivalence class of a map germ
along the inverse image of a point
in the target. The equivalence
classes of such singular fibers
together with their adjacency relations
lead to a cochain complex, and
in \cite{Saeki04} it was shown that
its cohomology classes give rise
to cobordism invariants for
singular maps. In fact, the
isomorphism between the
oriented cobordism group of Morse
functions on surfaces and the infinite
cyclic group was reconstructed by
using an invariant derived from
the universal complex of singular fibers
\cite[\S14.2]{Saeki04}.

This paper has three purposes.
The first one (see Sections \ref{section2} and \ref{section3})
is to give a new
and simple proof for the
calculation of the oriented
and the unoriented fold cobordism
groups of Morse functions on surfaces
(\fullref{thm:main1}).
In Ikegami--Saeki \cite{IS} and Kalm\'ar \cite{Kalmar}, the calculation
was done by simplifying a given
function on a graph by employing certain
moves. In this paper,
we also use the same moves, but
the simplification is drastically simple.

Furthermore, we will introduce the notion
of the oriented
and the unoriented \emph{simple} fold cobordism groups
of Morse functions on surfaces
by restricting the fold cobordisms
to simple ones (for simple maps,
the reader is referred to \cite{Saeki2.5,Saeki4} or \fullref{dfn:simple}
of the present paper). We will show that the
simple cobordism groups are in fact isomorphic
to the corresponding cobordism groups
(\fullref{thm:main2}).

The second purpose of this paper
is to construct cobordism invariants
for the four cobordism groups of
Morse functions on surfaces considered
above by using cohomology classes of the universal
complexes of singular fibers as developed
in \cite{Saeki04} (see
Sections \ref{section4} and \ref{section5}). 
By combining
the universal complex of co-orientable
singular fibers (with coefficients in $\Z$)
and that of usual (not necessarily
co-orientable) singular fibers
(with coefficients in $\Z_2$), we will
obtain complete cobordism invariants
for all the four cases above.

In \cite{Kazarian}, Kazarian constructed
the universal homology complex of singularities
by combining the universal complex
of co-orientable singularities and
that of usual (not necessarily
co-orientable) singularities
defined by Vassiliev \cite{V},
and studied their hypercohomologies.
In \fullref{section5}, we will consider the
analogy of Kazarian's construction
in our situation of singular fibers.
We will see that the hypercohomology classes
give rise to cobordism invariants,
but for cobordism groups of
Morse functions on surfaces, we
obtain the same invariants as
those obtained by using the usual 
universal complex of singular fibers.
It would be interesting to
study the hypercohomologies of higher
dimensional analogues to see if
there is a ``hidden singular
fiber'' in a sense similar to
Kazarian's \cite{Kazarian}.

The third purpose of this paper
is to give an application of the
theory of universal complexes of
singular fibers developed in Sections \ref{section4}
and \ref{section5} to the theory
of stable perturbations of map germs.
More precisely, we will consider
a smooth map germ $g \co (\R^3, 0)
\to (\R^2, 0)$ which is generic
in the sense of Fukuda and Nishimura \cite{Fukuda1,Nishimura}.
Then a stable
perturbation $\tilde{g}$ of a representative
of $g$ has
isolated cusps, and for each
cusp singular point we can define a sign $+1$
or $-1$ by using the indices of the
nearby fold points together with
the orientation
of the target $\R^2$.
Then by using the theory of singular fibers
(of generic maps of $3$--dimensional manifolds with
boundary into the $2$--dimensional disk),
we will show that the algebraic number of
cusps of $\tilde{g}$ is a local topological
invariant of $g$ with $\R^2$ being
oriented (\fullref{thm:new}). 
We also describe
this integer by a certain
cobordism invariant of a $C^\infty$
stable map of a compact surface
with boundary into $S^1$ associated
with $g$. This invariant is strongly
related to the isomorphism between
the oriented cobordism group
of Morse functions on surfaces and
the infinite cyclic group
constructed in Sections \ref{section3}
and \ref{section4}. It would be
interesting to compare our result
with those obtained by Fukuda and Ishikawa \cite{FI},
Fukui, Nu{\~n}o~Ballesteros and Saia \cite{FBS},
Nu{\~n}o~Ballesteros and Saia \cite{BS}, and Ohsumi \cite{Ohsumi},
about the number of certain singularities
appearing in a stable perturbation 
(or a generic deformation) of
a given map germ.

Throughout the paper, manifolds and maps
are differentiable of class $C^\infty$
unless otherwise indicated.
For a topological space $X$, $\id_X$ denotes
the identity map of $X$. 

The author would like to thank Boldizs\'ar Kalm\'ar
and Toshizumi Fukui
for stimulating discussions.
The author has been supported in part by
Grant-in-Aid for Scientific Research
(No.~16340018), Japan Society for the Promotion of Science.

\section{Preliminaries}\label{section2}

In this section, we recall some basic notions
about smooth functions and maps, and state two
theorems about cobordism groups of
Morse functions on surfaces.

A smooth real-valued function on a smooth manifold 
is called a \emph{Morse function} if its critical
points are all non-degenerate.
We do not assume that the values at the
critical points are all distinct: distinct critical
points may have the same value. If the
critical values are all distinct, then such a Morse
function is said to be ($C^\infty$)
\emph{stable}.
For details, see Golubitsky and Guillemin \cite[Chapter III, \S2]{GG}.

For a positive integer $n$, we denote by $M^{SO}(n)$ 
(or $M(n)$) the
set of all Morse functions on closed
oriented (resp.\ possibly nonorientable)
$n$--dimensional manifolds.
We adopt the convention that the function on the empty
set $\emptyset$ is an element of $M^{SO}(n)$ and of
$M(n)$ for all $n$. 

Before defining the cobordism groups of Morse functions,
let us recall the notion of fold singularities.
Let $f \co M \to N$ be a smooth map between smooth
manifolds with $n = \dim M \geq \dim N = p$.
A \emph{singular point} of $f$ is a point $q \in M$
such that the rank of the differential $df_q \co T_qM \to T_{f(q)}N$
is strictly smaller than $p$. We denote
by $S(f)$ the set of all singular points of $f$ and
call it the \emph{singular set} of $f$.
A singular point $q \in S(f)$ is a \emph{fold singular point}
(or a \emph{fold point})
if there exist local coordinates $(x_1, x_2, \ldots, x_n)$
and $(y_1, y_2, \ldots, y_p)$ around $q$ and $f(q)$
respectively such that $f$ has the form
$$y_i \circ f = \left\{\begin{array}{l}
x_i, \quad 1 \leq i \leq p-1, \\
\pm x_p^2 \pm x_{p+1}^2 \pm \cdots \pm x_n^2, \quad i=p.
\end{array}\right.$$
If the signs appearing in $y_p \circ f$ all
coincide, then we say that $q$ is a \emph{definite
fold singular point} (or a \emph{definite fold point}), 
otherwise an \emph{indefinite fold singular point}
(or an \emph{indefinite fold point}).

If a smooth map $f$ has only fold points as its
singularities, then we say that $f$ is a \emph{fold
map}.

\begin{dfn}\label{dfn:cob}
Two Morse functions $f_0 \co M_0 \to \R$ and $f_1 \co M_1 \to \R$
in $M^{SO}(n)$ are said to be \emph{oriented cobordant} (or
\emph{oriented fold cobordant}) if
there exist a compact oriented $(n+1)$--dimensional
manifold $X$ and a fold map 
$F\co X \to \R \times [0, 1]$ such that
\begin{itemize}
\item[(1)] the oriented boundary $\partial X$ of $X$
is the disjoint union $M_0 \amalg (-M_1)$, where $-M_1$ denotes
the manifold $M_1$ with the orientation reversed, and
\item[(2)] we have
\begin{align*}
F|_{M_0 \times [0, \varepsilon)} & = f_0 \times \id_{[0,
\varepsilon)} \co M_0 \times [0, \varepsilon) \to \R \times
[0, \varepsilon), \quad \text{\rm and} \\
F|_{M_1 \times (1-\varepsilon, 1]} 
& = f_1 \times \id_{(1-\varepsilon,
1]} \co M_1 \times (1-\varepsilon, 1] \to \R \times
(1-\varepsilon, 1]
\end{align*}
for some sufficiently small $\varepsilon > 0$, where
we identify the open 
collar neighborhoods of $M_0$ and $M_1$ in
$X$ with $M_0 \times [0, \varepsilon)$ and $M_1 
\times (1-\varepsilon,
1]$ respectively.
\end{itemize}
In this case, we call $F$ an \emph{oriented cobordism} between
$f_0$ and $f_1$.

If a Morse function in $M^{SO}(n)$ is oriented cobordant to
the function on the empty set, then we say that it
is \emph{oriented null-cobordant}.

It is easy to show that the above relation 
defines an equivalence
relation on the set $M^{SO}(n)$ for each $n$. 
Furthermore, we see easily that the set of all
oriented cobordism classes forms an additive group
under disjoint union: the neutral element is the class
corresponding to oriented null-cobordant Morse functions, and the
inverse of a class represented by a Morse function 
$f \co M \to \R$ is given by
the class of $-f \co -M \to \R$.
We denote
by $\mathcal{M}^{SO}(n)$ the group of all oriented (fold) cobordism classes
of elements of $M^{SO}(n)$ and call it the \emph{oriented}
(\emph{fold}) \emph{cobordism
group of Morse functions on manifolds of dimension $n$},
or the \emph{$n$--dimensional oriented cobordism group of Morse functions}.

We can also define the unoriented versions of all the
objects defined above by forgetting the orientations and
by using $M(n)$ instead of $M^{SO}(n)$.
For the terminologies,
we omit the term ``oriented'' (or use ``unoriented''
instead) for the corresponding unoriented versions. 
The unoriented cobordism group of Morse functions
on manifolds of dimension $n$ is denoted by 
$\mathcal{M}(n)$ by omitting the superscript $SO$.
\end{dfn}

\begin{rmk}
The oriented cobordism group $\mathcal{M}^{SO}(n)$
is denoted by $\mathcal{M}(n)$ in Ikegami--Saeki \cite{IS}
and by $\mathcal{M}_n$ in Ikegami \cite{Ikegami}.
Furthermore, the unoriented cobordism group
$\mathcal{M}(n)$ is denoted by $\mathcal{N}_n$ in
\cite{Ikegami} and by
$Cob_f(n, 1-n)$ in Kalm\'ar \cite{Kalmar}.
\end{rmk}

\begin{rmk}\label{rmk:stable}
Let $M$ be a closed (oriented) $n$--dimensional manifold.
It is easy to see that
if two Morse functions $f$ and $g$ on $M$ are connected
by a one-parameter family of Morse functions, then they
are (oriented) cobordant. In particular, every Morse function
is (oriented) cobordant to a stable Morse function. 
\end{rmk}

The following isomorphisms have been proved in
\cite{Ikegami,IS,Kalmar}.

\begin{thm}\label{thm:main1}
$(1)$ The $2$--dimensional oriented cobordism group of Morse
functions $\mathcal{M}^{SO}(2)$ is isomorphic to $\Z$,
the infinite cyclic group.

$(2)$ The $2$--dimensional unoriented cobordism group of Morse
functions $\mathcal{M}(2)$ is isomorphic to
$\Z \oplus \Z_2$.
\end{thm}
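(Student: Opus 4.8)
The plan is to prove both isomorphisms simultaneously by a combinatorial reduction of Morse functions to a normal form, in the spirit of \cite{IS,Kalmar} but with a streamlined simplification. First I would invoke \fullref{rmk:stable} to replace a given Morse function by a stable one, so that all critical values are distinct. To a stable Morse function $f \co M \to \R$ I attach its Reeb graph $W_f$ (the quotient of $M$ identifying each connected component of a level set to a point), decorated with the induced height function $\bar{f} \co W_f \to \R$: the univalent vertices correspond to the definite folds (maxima and minima) and the trivalent vertices to the indefinite folds (saddles). In the oriented case I further decorate the edges by the orientation that the regular fibers inherit from the orientations of $M$ and of $\R$; in the unoriented case I retain only the $\Z_2$--information. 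The point is that a fold cobordism between two such functions is generated by a short list of local moves on these decorated graphs --- precisely the moves already isolated in \cite{IS,Kalmar} --- so the whole problem becomes one of simplifying decorated graphs under these moves.

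The heart of the argument is the normal-form reduction, and this is where the promised simplification enters. Using the moves I would cancel each trivalent vertex against an adjacent univalent vertex (a saddle against a neighbouring maximum or minimum), sliding critical values past one another as needed to bring cancelling pairs into position. Choosing a good order of cancellations, every connected component of $W_f$ collapses to a single edge joining one minimum to one maximum --- the graph of the standard height function on $S^2$ --- except for a possible residual feature recording non-orientability in the unoriented case. Thus every class is represented by a disjoint union of copies of the $S^2$--model, together with at most one elementary non-orientable model.

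It then remains to identify complete invariants and match the normal forms with the asserted groups. In the oriented case I would use the Euler characteristic, which is even on oriented surfaces and defines a homomorphism $\mathcal{M}^{SO}(2) \to \Z$, $[M,f] \mapsto \chi(M)/2$; the standard height function on $S^2$ maps to $1$, giving surjectivity, and the normal-form reduction gives injectivity, so $\mathcal{M}^{SO}(2) \cong \Z$ as in \fullref{thm:main1}(1). In the unoriented case I would pair $\chi$ (which now realizes the $\Z$--summand) with a $\Z_2$--valued orientability invariant detecting the residual non-orientable model; exhibiting explicit generators (the $S^2$--model and a Klein-bottle / $\R P^2$ model) and checking that two copies of the non-orientable generator cobound yields the isomorphism $\mathcal{M}(2) \cong \Z \oplus \Z_2$ of \fullref{thm:main1}(2).

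The main obstacle is completeness, in two senses. First, one must verify that the listed moves really generate every fold cobordism: a cobordism is a fold map on a $3$--manifold $X$, whose level surfaces change topology at the critical points of the projection to $[0, 1]$, and one has to check that the no-cusp (fold) condition forces each such surgery event to be one of the elementary moves, with no further possibilities. Second, one must show that the normal-form reduction always terminates and that the Euler characteristic together with the $\Z_2$--invariant are genuinely complete --- that no additional invariant survives the reduction. Disentangling the order-two $\Z_2$--summand in the unoriented case from the free $\Z$--summand, and confirming that it is exactly $2$--torsion, is the most delicate part of the bookkeeping.
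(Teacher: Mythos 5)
Your overall strategy---stabilize, pass to the Reeb graph, and simplify under the moves of Ikegami--Saeki and Kalm\'ar---is exactly the paper's, but the proposal goes wrong at the two decisive points: the normal form and the invariant. You claim that every component of $W_f$ collapses to a single edge joining a minimum to a maximum, i.e.\ to the Reeb graph of the standard height function on $S^2$. That function is oriented null-cobordant (this is precisely what move (d) encodes: the piece of type (1) in \fullref{fig3} can be deleted), so if that were the normal form the oriented group would be trivial, contradicting the statement you are trying to prove. The correct reduction runs the other way: cut the graph into the elementary pieces of \fullref{fig3}, cancel each pair consisting of a $+1$ tripod and a $-1$ tripod into type-(1) pieces, delete all type-(1) pieces by move (d), and end with a disjoint union of tripods of a single sign. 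The generator of $\mathcal{M}^{SO}(2)$ is the tripod --- a sphere carrying two maxima, one saddle and one minimum --- not the standard sphere.

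Correspondingly, $\chi(M)/2$ is not a fold cobordism invariant and cannot realize the $\Z$: the standard height functions on $S^2$ and on $T^2$ each have one maximum and one minimum, hence are both null-cobordant, yet $\chi(S^2)=2\neq 0=\chi(T^2)$ while $\chi(\emptyset)=0$. The whole point of the theorem (emphasized in \fullref{section1}) is that $\mathcal{M}^{SO}(2)$ is much larger than $\Omega_2^{SO}=0$, so the complete invariant must depend on the function, not just on the surface: it is $\max(f)-\min(f)$, the difference between the numbers of local maxima and local minima, equivalently the signed count of trivalent vertices of the Reeb graph. Only the parity of $\chi(M)$ survives as a cobordism invariant, and it furnishes exactly the $\Z_2$ summand in the unoriented case (via the number of degree-two vertices, see \cite[Corollary~2.4]{Saeki04}); your proposed ``orientability'' $\Z_2$--invariant also fails, since a Morse function on the Klein bottle with equal numbers of maxima and minima is null-cobordant even though the Klein bottle is nonorientable. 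As it stands the proposal would ``prove'' $\mathcal{M}^{SO}(2)=0$, so the gap is not a matter of bookkeeping but of identifying the right generator and the right invariant.
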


In \fullref{section3}, we will give a new and simple proof
for the above isomorphisms.
We will also describe explicit isomorphisms.

In order to define new cobordism groups of
Morse functions, we need the following.

\begin{dfn}\label{dfn:simple}
Let $f \co M \to N$ be a smooth map between smooth manifolds.
We say that $f$ is \emph{simple} if for every $y \in N$,
each connected component
of $f^{-1}(y)$ contains at most one singular point.
\end{dfn}

Note that a stable Morse function is always
simple.

If $\dim{N} = 2$ and $f \co M \to N$
is a fold map which is $C^\infty$ stable (for details, see
Golubitsky--Guillemin, Levine, Saeki 
\cite{GG,Levine1,Saeki04}), then
$S(f)$ is a regular $1$--dimensional
submanifold of $M$ and the map
$f|_{S(f)}$ is an immersion with normal
crossings.
Therefore, for every $y \in N$,
$f^{-1}(y)$ contains at most two singular points.
Fold maps of closed $3$--manifolds into the plane
which are $C^\infty$ stable and simple
have been studied by the author 
\cite{Saeki2.5,Saeki4}. 

\begin{dfn}
For a positive integer $n$, we denote by $SM^{SO}(n)$ 
(or $SM(n)$) the
set of all \emph{stable} 
Morse functions on closed
oriented (resp.\ possibly nonorientable)
$n$--dimensional manifolds.

Two stable Morse functions 
$f_0 \co M_0 \to \R$ and $f_1 \co M_1 \to \R$
in $SM^{SO}(n)$
are said to be \emph{simple oriented} (\emph{fold})
\emph{cobordant} if
there exist an oriented cobordism $F$ between
$f_0$ and $f_1$ as in \fullref{dfn:cob}
such that $F$ is simple and $F|_{S(F)}$
is an immersion with normal crossings.
In this case, we call $F$ a \emph{simple
oriented cobordism} between
$f_0$ and $f_1$.

It is easy to show that the above relation 
defines an equivalence
relation on the set $SM^{SO}(n)$ for each $n$. 
Furthermore, we see easily that the set of all
simple oriented cobordism classes forms an additive group
under disjoint union.
We denote
by $\mathcal{SM}^{SO}(n)$ the group of all simple
oriented (fold) cobordism classes
of elements of $SM^{SO}(n)$ and call it the \emph{simple oriented}
(\emph{fold}) \emph{cobordism
group of Morse functions on manifolds of dimension $n$},
or the \emph{$n$--dimensional simple oriented 
cobordism group of Morse functions}.

We can also define the unoriented versions of all the
objects defined above.
The simple unoriented cobordism group of Morse functions
on manifolds of dimension $n$ is denoted by 
$\mathcal{SM}(n)$.
\end{dfn}

We will prove the following in \fullref{section3}.

\begin{thm}\label{thm:main2}
$(1)$ The $2$--dimensional simple oriented cobordism group of Morse
functions $\mathcal{SM}^{SO}(2)$ is isomorphic to $\Z$,
the infinite cyclic group.

$(2)$ The $2$--dimensional simple unoriented cobordism group of Morse
functions $\mathcal{SM}(2)$ is isomorphic to
$\Z \oplus \Z_2$.
\end{thm}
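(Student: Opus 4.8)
The plan is to compare the simple cobordism groups with the ordinary ones of \fullref{thm:main1} through the forgetful homomorphism. Since a stable Morse function is simple (as noted after \fullref{dfn:simple}) and a simple oriented cobordism is in particular an oriented cobordism, sending the simple cobordism class of $f \in SM^{SO}(2)$ to its ordinary class in $\mathcal{M}^{SO}(2)$ gives a well-defined homomorphism
\[
\Phi \co \mathcal{SM}^{SO}(2) \to \mathcal{M}^{SO}(2),
\]
and likewise $\mathcal{SM}(2) \to \mathcal{M}(2)$ in the unoriented case. As $\mathcal{M}^{SO}(2) \cong \Z$ and $\mathcal{M}(2) \cong \Z \oplus \Z_2$ by \fullref{thm:main1}, it suffices to prove that $\Phi$ and its unoriented analogue are isomorphisms. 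I would treat the oriented case in detail; the unoriented one is identical with $\Z$ replaced by $\Z_2$ and orientations forgotten throughout.

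Surjectivity is immediate from \fullref{rmk:stable}: every class of $\mathcal{M}^{SO}(2)$ is represented by some stable Morse function $f$, and then $\Phi$ carries the simple cobordism class of $f$ onto the ordinary class of $f$. Hence the whole content lies in injectivity, namely in showing that if two stable Morse functions $f_0, f_1$ are oriented cobordant, then they are already \emph{simple} oriented cobordant.

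So let $F \co X \to \R \times [0,1]$ be an ordinary oriented fold cobordism between $f_0$ and $f_1$. Since $F$ is a fold map, $S(F)$ is automatically a regular $1$--dimensional submanifold and $F|_{S(F)}$ is automatically an immersion; moreover, near $\partial X$, where $F$ coincides with the collar products of the \emph{stable} functions $f_0, f_1$, the fold image already has only normal crossings because the critical values of $f_0, f_1$ are distinct. I would therefore perturb $F$ slightly in the interior, staying within the class of fold maps, so as to put this immersion in general position; this creates no cusps and leaves $\partial X$ untouched. After this step the only remaining obstruction to simplicity is the possible presence of points $y$ whose fiber $F^{-1}(y)$ has a connected component carrying two fold points, and such points lie among the finitely many normal crossings of the fold image.

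The main obstacle is to remove these non-simple fibers by a modification of $F$ supported in the interior of $X$, leaving $\partial X$ fixed, so that the result is again a fold cobordism between the same $f_0$ and $f_1$. Here I would use the local classification of singular fibers of fold maps of $3$--manifolds into surfaces, following the author's study of simple stable maps \cite{Saeki2.5,Saeki4}: whenever two fold points lie on a single connected fiber component, one inserts a local cobordism that slides the two fold arcs past one another so that afterwards no fiber component contains more than one singular point, and this move can be arranged to respect the orientation of $X$. Applying such moves to each of the finitely many offending fibers converts $F$ into a simple oriented cobordism, which proves injectivity of $\Phi$; together with \fullref{thm:main1} this yields \fullref{thm:main2}(1), and the unoriented statement (2) follows by the same argument.
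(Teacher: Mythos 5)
Your reduction to the forgetful homomorphisms $\mathcal{SM}^{SO}(2)\to\mathcal{M}^{SO}(2)$ and $\mathcal{SM}(2)\to\mathcal{M}(2)$ is the right framing (the paper states exactly this reformulation right after \fullref{thm:main2}), and your surjectivity argument via \fullref{rmk:stable} is fine. The gap is in injectivity. You propose to take an arbitrary oriented fold cobordism $F\co X\to\R\times[0,1]$ between $f_0$ and $f_1$ and convert it into a simple one by modifications supported in the interior of the \emph{same} manifold $X$, ``sliding the two fold arcs past one another'' at each offending crossing. This step is precisely the entire content of the theorem, no argument is given for it, and as stated it should not be expected to work. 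Whether the two fold points over a normal crossing of $F(S(F))$ lie on the same connected component of the fiber is a global property of that fiber, not something a small perturbation can change; and a large homotopy through fold maps is not available for free, since a generic homotopy of maps of $3$--manifolds into surfaces passes through maps with cusps, which are forbidden in a fold cobordism. More decisively, by the author's own results on simple stable maps \cite{Saeki2.5,Saeki4}, a closed orientable $3$--manifold admits a simple stable map into a surface only if it is a graph manifold; so the existence of a simple map on a \emph{given} source $3$--manifold is genuinely obstructed, and injectivity must be proved by producing a \emph{new} simple cobordism (on a possibly different manifold), not by repairing the given one in place.

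That is what the paper does. It encodes a stable Morse function by its Reeb function $\bar f\co W_f\to\R$ and uses the moves (a)--(k) of \cite{Kalmar} on Reeb functions, each of which is realized by an explicit cobordism. The observation driving the proof is that the reduction of an arbitrary Reeb function to a standard form can be accomplished using only the moves (a)--(d) and (i), and these particular moves are realized by \emph{simple} cobordisms. Hence every stable Morse function is simple cobordant to a standard form classified by the invariants of \fullref{thm:main1}, which yields injectivity of the forgetful maps and hence \fullref{thm:main2}. To salvage your outline you would need to replace the ``slide the fold arcs'' step by such a construction of a simple cobordism to a normal form, rather than an interior modification of the given $F$.
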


In fact, we will see that
the natural homomorphisms
$$\mathcal{SM}^{SO}(2) \to \mathcal{M}^{SO}(2)
\quad \text{ and } \quad
\mathcal{SM}(2) \to \mathcal{M}(2)$$
are isomorphisms.

\section{Proofs of the theorems}\label{section3}

In this section, we will prove Theorems~\ref{thm:main1}
and \ref{thm:main2}.

Let us first recall
the following notion
of a Stein factorization
(for more details, see \cite{KLP,Levine1,Saeki04},
for example).

\begin{dfn}
Suppose that a smooth map $f \co M \to N$ with $n = \dim M 
\geq \dim N = p$
is given. Two points in $M$ are \emph{equivalent} with respect to $f$
if they lie on the same component of an $f$--fiber. Let $W_f$ denote
the quotient space of $M$ with respect to this equivalence relation and
$q_f \co M \to W_f$ the quotient map. It is easy to see that then
there exists a unique continuous map $\bar{f} \co W_f \to N$ such that
$f = \bar{f} \circ q_f$. The space $W_f$ or the commutative diagram
\begin{eqnarray*}
& M \spmapright{f} N & \\
& \scriptstyle{q_f} \searrow \,\quad \nearrow \scriptstyle{\bar{f}} & \\
& W_f &
\end{eqnarray*}
is called the \emph{Stein factorization} of $f$.
\end{dfn}

If $f \co M \to \R$ is a Morse function on a closed
manifold $M$, then $W_f$ has the natural structure
of a $1$--dimensional CW complex. In this case, we call
$W_f$ the \emph{Reeb graph} of $f$ (for example, see \cite{Fomenko}).
Furthermore, we call the continuous map $\bar{f} \co W_f \to \R$
a \emph{Reeb function}.

Let $f \co M \to \R $ be a stable Morse function on a closed
(possibly nonorientable) surface $M$. 
Then by \cite[Lemma~3.2]{IS} and \cite[\S3]{Kalmar},
its Reeb graph $W_f$ is a finite graph whose
vertices are the $q_f$--images of
the critical points of $f$ such that
\begin{itemize}
\item[$(1)$] the vertices corresponding to critical points
of index $0$ or $2$ have degree $1$, and those of index $1$
have degree $2$ or $3$,
\end{itemize}
and the Reeb function $\bar{f} \co W_f \to \R$ satisfies
the following:
\begin{itemize}
\item[$(2)$] around each vertex of $W_f$, $\bar{f}$ is 
equivalent to one of the functions as
depicted in \fullref{fig1}, and
\item[$(3)$] $\bar{f}$ is an embedding on each edge.
\end{itemize}
Furthermore, a degree $2$ vertex occurs only if $M$ is
nonorientable.

\begin{figure}[ht!]\small
\begin{center}
\labellist
\pinlabel {$\R$} [lB] at 255 743
\pinlabel {$\R$} [lB] at 536 734
\pinlabel {$\R$} [lB] at 67 311
\pinlabel {$\R$} [lB] at 381 305
\pinlabel {$\R$} [lB] at 671 308
\pinlabel {index $0$} at 192 469
\pinlabel {index $2$} at 477 469
\pinlabel {index $1$ $(+1)$} at -11 30
\pinlabel {index $1$ $(-1)$} at 283 30
\pinlabel {index $1$} at 613 30
\endlabellist
\includegraphics[width=0.6\linewidth]{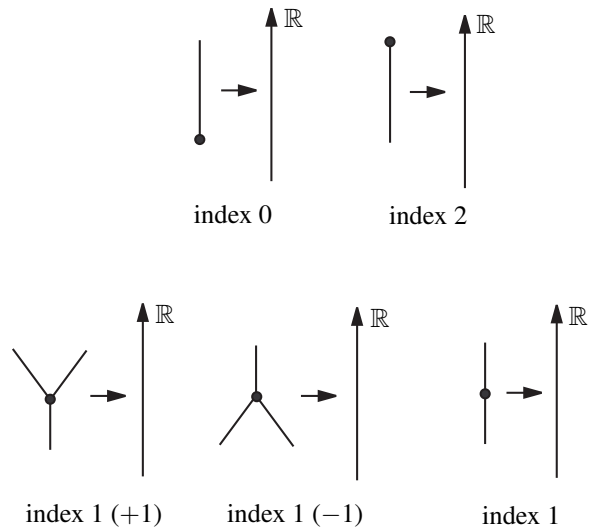}
\end{center}
\caption{Behavior of $\bar{f}$ around each vertex of the Reeb graph $W_f$}
\label{fig1}
\end{figure}

To each vertex of degree three of a Reeb graph
we associate the sign $+1$ or $-1$ as in \fullref{fig1}.

For the proof of \fullref{thm:main1},
in \cite{IS,Kalmar}, certain moves for
Reeb functions have been considered
(for details, see \cite[Figure~3]{IS} and
\cite[Figure~3]{Kalmar}). Recall that
these moves correspond to the Stein
factorizations of cobordisms between
Morse functions.
We will refer to the types of these
moves according to \cite[Figure~3]{Kalmar}
(types (a)--(k)). 
We call the following
types \emph{admissible moves}
for each of the four situations:
\begin{itemize}
\item[$(1)$] oriented cobordism: (a)--(g),
\item[$(2)$] unoriented cobordism: (a)--(k),
\item[$(3)$] simple oriented cobordism: (a)--(d),
\item[$(4)$] simple unoriented cobordism:
(a)--(d), (h), (i).
\end{itemize}
According to \cite{IS,Kalmar},
we have only to show that the Reeb
function $\bar{f} \co W_f \to \R$ associated
with an arbitrary stable Morse function
$f \co M \to \R$ on a closed (orientable) surface 
can be deformed to a standard form
(see \cite[Figure~5]{IS} and
\cite[Figures~4 and 5]{Kalmar})
by a finite iteration of admissible moves.

Let us first apply the move (a) to each
edge of $W_f$ so that $\bar{f}$ decomposes
into a disjoint union of four elementary
functions as depicted in \fullref{fig3}.
Note that if $M$ is orientable, then
the piece as in \fullref{fig3} (4) 
does not appear.

\begin{figure}[ht!]\small
\begin{center}
\labellist
\pinlabel {$\R$} [Bl] at 79 697
\pinlabel {$\R$} [Bl] at 224 697
\pinlabel {$\R$} [Bl] at 372 697
\pinlabel {$\R$} [Bl] at 493 697
\pinlabel {$(1)$} at 47 553
\pinlabel {$(2)$} at 186 553
\pinlabel {$(3)$} at 323 553
\pinlabel {$(4)$} at 464 553
\endlabellist
\includegraphics[width=0.7\linewidth]{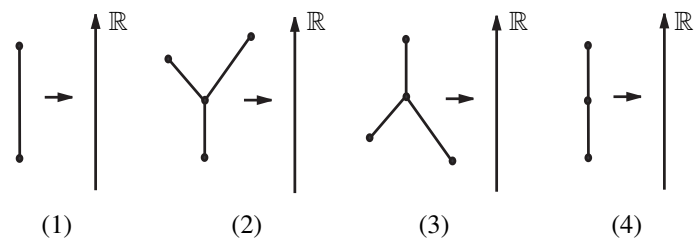}\vspace{-6mm}
\end{center}
\caption{Four elementary functions}
\label{fig3}
\end{figure}

Suppose that $M$ is orientable.
If there is a pair of pieces (2) and
(3), then by the moves (a) and (c) 
(or (a) and (b)) we can replace it
by one or two pieces of type (1).
Furthermore,
by the move (d), we may assume that a
piece as in (1) does not appear. 
Therefore, we end up with an empty graph,
or a disjoint union of several pieces of
type (2) (or a disjoint union of several
pieces of type (3)). Then by the move (a),
we get a standard form.

Suppose now that $M$ is nonorientable.
If there is a pair of pieces of type (4),
then we can replace it with a piece (1)
by the moves (a) and (i). Then the rest of the
argument is the same.

This completes the proof of
\fullref{thm:main1}.

It is easy to observe that in the above proof,
we have only used admissible moves
for simple cobordisms, namely (a)--(d) and (i). 
Therefore, 
the same argument can be applied
to prove \fullref{thm:main2} as well.

\begin{rmk}
In \cite{IS,Kalmar}, the moves (e), (f), (g), (j)
and (k) were used for the proof, and the same argument
cannot be applied to the situation of simple cobordisms.
\end{rmk}

In \fullref{thm:main1} (1) and \fullref{thm:main2} (1),
an isomorphism is given by the map which
associates to each cobordism class of a Morse
function to the sum of the indices $\pm 1$ over
all vertices of degree three of the associated
Reeb function. This sum is equal to
the difference between the numbers of
local maxima and local minima.
In the unoriented cases (\fullref{thm:main1} (2)
and \fullref{thm:main2} (2)),
an isomorphism is constructed by
combining a similar map into $\Z$
and the map which associates to each
cobordism class of a Morse function to
the parity of the number of degree two
vertices of the associated Reeb graph.
Note that this parity coincides with the
parity of the Euler characteristic of the
source surface \cite[Corollary~2.4]{Saeki04}.

\section{Universal complex of singular fibers}\label{section4}

In \cite{Saeki04}, a theory of singular fibers
of differentiable maps has been developed.
The author has introduced the notion of a universal
complex of singular fibers and has
shown that certain cohomology classes of a
universal complex give rise to cobordism invariants
of singular maps.
In this section, we show that the isomorphisms in
Theorems~\ref{thm:main1} and \ref{thm:main2}
can be given by certain cohomology classes
of universal complexes of singular fibers.
This will give explicit examples showing
the effectiveness of the theory of singular fibers
developed in \cite{Saeki04}.

For the terminologies used in this and the
following sections, we refer the reader to
\cite{Saeki04} and also to \cite{Kazarian,Ohmoto,V}.

Let us consider proper $C^\infty$ stable
maps of $3$--manifolds into surfaces.
(Recall that for nice dimensions, a proper
smooth map is $C^\infty$ stable if and only if
it is $C^0$ stable. See \cite{DW}.)
Then we have the list of $C^\infty$ (or
$C^0$) equivalence
classes of singular fibers of such maps
as in \fullref{fig43}. (For the definition
of the $C^\infty$ or $C^0$ equivalence relation for
singular fibers, see \cite[Chapter~1]{Saeki04}.
This can be regarded as the $C^\infty$ or
$C^0$ right-left
equivalence for map germs along the inverse image of
a point.) 
In fact,
every singular fiber of such a map
is $C^\infty$ (or $C^0$)
equivalent to the disjoint union of one of the fibers as in
\fullref{fig43} and a finite number of copies
of a fiber of the trivial circle bundle. 
For details,
see \cite{Saeki04}.

\begin{figure}[ht!]\small
\centering
\labellist\hair 5pt
\pinlabel {$\kappa = 1$} [Br] <-30pt,-2pt> at -7 742
\pinlabel {$\kappa = 2$} [Br] <-30pt,-3pt> at -7 649
\pinlabel {$\tilde{\mathrm{I}}^0$} [Br] <0pt,-2pt> at -7 742
\pinlabel {$\tilde{\mathrm{I}}^1$} [Br] <0pt,-2pt> at 185 742
\pinlabel {$\tilde{\mathrm{I}}^2$} [Br] <0pt,-2pt> at 418 742
\pinlabel {$\tilde{\mathrm{II}}^{00}$} [Br] <0pt,-3pt> at -7 649
\pinlabel {$\tilde{\mathrm{II}}^{01}$} [Br] <0pt,-3pt> at 185 649
\pinlabel {$\tilde{\mathrm{II}}^{02}$} [Br] <0pt,-3pt> at 418 649
\pinlabel {$\tilde{\mathrm{II}}^{11}$} [r] at -25 497
\pinlabel {$\tilde{\mathrm{II}}^{12}$} [r] at 185 497
\pinlabel {$\tilde{\mathrm{II}}^{22}$} [r] <5pt,0pt> at 418 497
\pinlabel {$\tilde{\mathrm{II}}^3$} [r] at -50 343
\pinlabel {$\tilde{\mathrm{II}}^4$} [r] at 167 343
\pinlabel {$\tilde{\mathrm{II}}^5$} [r] at 390 343
\pinlabel {$\tilde{\mathrm{II}}^6$} [r] at -65 171
\pinlabel {$\tilde{\mathrm{II}}^7$} [r] at 200 171
\pinlabel {$\tilde{\mathrm{II}}^a$} [r] at 475 171
\endlabellist
\includegraphics[width=0.7\linewidth]{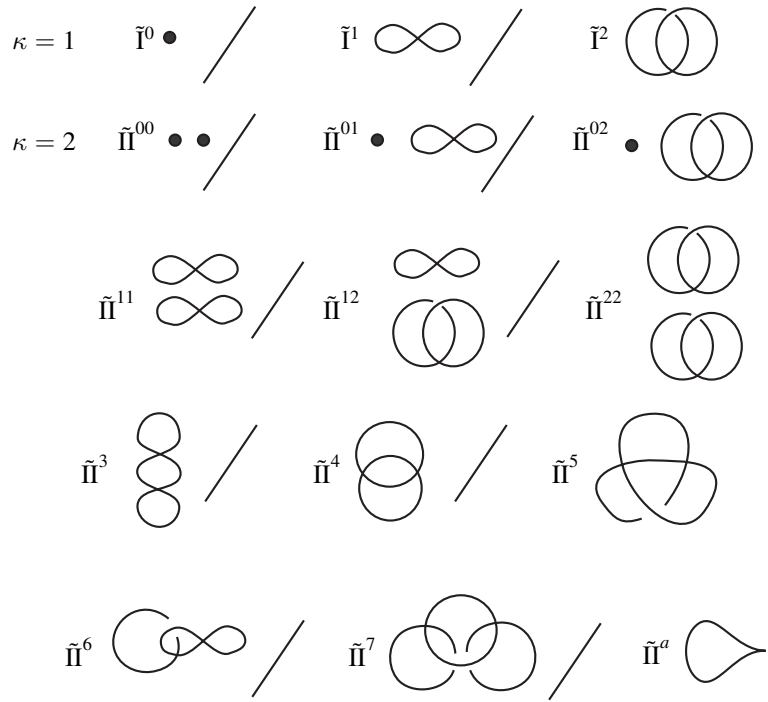}
\caption{List of singular fibers of proper $C^\infty$ stable maps of
$3$--manifolds into surfaces}
\label{fig43}
\end{figure}

Note that in \fullref{fig43},
$\kappa$ denotes the codimension of the set
of points in the target whose corresponding fibers
are equivalent to the relevant one. 
Furthermore, $\tilde{\mathrm{I}}^*$ and
$\tilde{\mathrm{II}}^*$ mean the name of the
corresponding singular fiber, and ``$/$'' is
used only for separating the figures.
The equivalence
class of fibers of codimension zero corresponds to
the class of regular fibers and is unique.
We denote this codimension zero equivalence class
by $\tilde{\mathbf{0}}$. 

We note that the
fiber $\tilde{\mathrm{II}}^a$ corresponds to
a cusp singular point defined as follows.
Let $M$ be a manifold of dimension $n \geq 2$ and
$f \co M \to N$ a smooth map into a surface $N$.
A singular point $x \in S(f)$ of $f$ is called a
\emph{cusp singular point} (or a \emph{cusp point})
if there exist
local coordinates $(x_1, x_2, \ldots, x_n)$ around
$x$ and $(y_1, y_2)$ around $f(x)$ such that
$f$ has the form
$$
y_i \circ f =
\begin{cases}
x_1, \qquad i=1, & \\
x_1x_2+ x_2^3 \pm x_3^2 \pm \cdots \pm x_n^2,
& i = 2.
\end{cases}
$$
If the source $3$--manifold is orientable, then
the singular fibers of types
$\tilde{\mathrm{I}}^2$, $\tilde{\mathrm{II}}^{02}$,
$\tilde{\mathrm{II}}^{12}$, $\tilde{\mathrm{II}}^{22}$,
$\tilde{\mathrm{II}}^5$, $\tilde{\mathrm{II}}^6$ and
$\tilde{\mathrm{II}}^7$ do not appear.

Note also that the list of $C^\infty$
(or $C^0$) equivalence classes of singular fibers
of proper stable Morse functions
on surfaces is nothing but those appearing in
\fullref{fig43} with $\kappa = 1$.

Let $\varrho^0_{n, n-1}(2)$ be the $C^0$ equivalence
relation modulo two circle components for fibers of proper
$C^0$ stable maps of $n$--dimensional
manifolds into $(n-1)$--dimensional manifolds
which are Thom maps.
(For details, see \cite[p.~84]{Saeki04}.
Roughly speaking, two fibers are equivalent with
respect to $\varrho^0_{n, n-1}(2)$ if one is
$C^0$ equivalent to the other one after adding
an even number of regular circle components.)
For a $C^0$ equivalence class $\tilde{\mathfrak{F}}$
of singular fibers, we denote
by $\tilde{\mathfrak{F}}_{\mathrm{o}}$
(or $\tilde{\mathfrak{F}}_{\mathrm{e}}$)
the equivalence class with respect to
$\varrho^0_{n, n-1}(2)$ containing a singular fiber
of type $\tilde{\mathfrak{F}}$ whose total
number of components is odd (resp.\ even).

Let us consider those equivalence classes which
are (strongly) co-orientable in the sense of 
\cite[Definition~10.5]{Saeki04}.
(Roughly speaking, an equivalence class $\tilde{\mathfrak{F}}_\ast$
is strongly co-orientable
if for a given stable map and a point $q$ in the target
whose fiber belongs to $\tilde{\mathfrak{F}}_\ast$,
any local homeomorphism around $q$ preserving 
the adjacent equivalence
classes preserves the orientation of the normal
direction to the submanifold
corresponding to $\tilde{\mathfrak{F}}_\ast$.)
Then we easily get the following for $n = 3$.

\begin{lem}\label{lem:coori}
Those equivalence classes with respect to
$\varrho^0_{3, 2}(2)$ which are
strongly
co-orientable are
$\tilde{\mathbf{0}}_\ast$,
$\tilde{\mathrm{I}}^0_\ast$,
$\tilde{\mathrm{I}}^1_\ast$,
$\tilde{\mathrm{II}}^{01}_\ast$
and $\tilde{\mathrm{II}}^a_\ast$,
where $\ast = \mathrm{o}$ and $\mathrm{e}$.
The other equivalence classes are not
strongly co-orientable.
\end{lem}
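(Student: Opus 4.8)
The plan is to prove \fullref{lem:coori} by a direct, finite inspection of the local models listed in \fullref{fig43}, reducing the strong co-orientability of each class to a question about the symmetry group of the labelled local stratification of the target. Fix a class $\tilde{\mathfrak F}_\ast$ and a target point $q$ whose fibre has this type. Any local homeomorphism $h$ of the target that fixes $q$ and preserves all adjacent equivalence classes permutes the strata of the canonical stratification near $q$, and strong co-orientability amounts to the assertion that every such $h$ acts orientation-preservingly on the normal bundle of the $\tilde{\mathfrak F}_\ast$-stratum. For the codimension-zero class $\tilde{\mathbf 0}_\ast$ there is no normal direction and the statement is vacuous, so I would treat the codimension-one and codimension-two strata separately. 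Throughout, the passage to $\varrho^0_{3,2}(2)$ is harmless: adding an even number of regular circles does not change the parity of the number of components, so the analysis applies identically to the refinements $\ast=\mathrm o$ and $\ast=\mathrm e$, and each class has the same status as its two refinements.

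For the codimension-one classes $\tilde{\mathrm I}^0,\tilde{\mathrm I}^1,\tilde{\mathrm I}^2$ the normal bundle is a line, and the only way to reverse the co-orientation is to exchange the two local regions cut out by the fold arc; this is admissible precisely when the regular fibres on the two sides lie in the same class. For the definite fold $\tilde{\mathrm I}^0$ a circle component is born or dies across the arc, and for the indefinite fold $\tilde{\mathrm I}^1$ the number of components jumps by one; in both cases the parity of the number of components changes, so the two sides remain distinct even after passing to $\varrho^0_{3,2}(2)$, the exchange is inadmissible, and the class is co-orientable. For $\tilde{\mathrm I}^2$ (which occurs only for non-orientable source) the two sides are equivalent, so the region-exchange is an admissible orientation-reversing symmetry and the class is not co-orientable.

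For the codimension-two classes the normal bundle is the full tangent plane, so $h$ reverses the co-orientation exactly when it reverses the orientation of the surface, and I would detect such an $h$ through two elementary symmetries. For a transverse crossing $\tilde{\mathrm{II}}^{ij}$ of a type-$i$ and a type-$j$ fold arc these are: (a) the interchange of the two arcs, available when $i=j$ and orientation-reversing since it acts as $(x,y)\mapsto(y,x)$; and (b) a reflection across a single arc, admissible precisely when that arc is a non-co-orientable fold, i.e.\ of type $2$. Hence $\tilde{\mathrm{II}}^{00},\tilde{\mathrm{II}}^{11},\tilde{\mathrm{II}}^{22}$ fail co-orientability through (a), while $\tilde{\mathrm{II}}^{02},\tilde{\mathrm{II}}^{12}$ (and again $\tilde{\mathrm{II}}^{22}$) fail through (b); only $\tilde{\mathrm{II}}^{01}$, the crossing of a definite and an indefinite fold, admits neither symmetry and is therefore co-orientable. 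For the remaining connected fibres $\tilde{\mathrm{II}}^3,\ldots,\tilde{\mathrm{II}}^7$ I would read off from their local models in \fullref{fig43}, in each case, an orientation-reversing self-homeomorphism of a neighbourhood of $q$ preserving the adjacent classes (for the non-orientable fibres $\tilde{\mathrm{II}}^5,\tilde{\mathrm{II}}^6,\tilde{\mathrm{II}}^7$ this is the reflection symmetry already visible in the figure), so that none of these is co-orientable.

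The subtle, and in my view decisive, case is the cusp $\tilde{\mathrm{II}}^a$, where one must rule out the apparent reflection symmetry of a cuspidal curve. The key point I would establish is that the two fold arcs emanating from a cusp are \emph{not} of the same type: starting from the normal form $y_2\circ f = x_1x_2 + x_2^3 \pm x_3^2$ and parametrising the singular set by $t$ so that its image is the cuspidal curve $(y_1,y_2)=(-3t^2,-2t^3)$, a direct computation of the Hessian of $y_2$ in the kernel directions along the singular set shows that the branch $t>0$ is a \emph{definite} fold and the branch $t<0$ is an \emph{indefinite} fold (or vice versa, according to the sign of the $x_3^2$ term). Consequently the two branches carry the distinct labels $\tilde{\mathrm I}^0$ and $\tilde{\mathrm I}^1$, no adjacency-preserving local homeomorphism can interchange them, the reflection of the cuspidal curve is inadmissible, the symmetry group fixing all labels is trivial, and $\tilde{\mathrm{II}}^a$ is strongly co-orientable. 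The main obstacle in the whole argument is precisely this branch computation, since it is the asymmetry between the two fold arcs that separates the co-orientable cusp from the non-co-orientable crossings; once it is in hand, assembling the cases above yields exactly the list $\tilde{\mathbf 0}_\ast,\tilde{\mathrm I}^0_\ast,\tilde{\mathrm I}^1_\ast,\tilde{\mathrm{II}}^{01}_\ast,\tilde{\mathrm{II}}^a_\ast$ asserted in the lemma.
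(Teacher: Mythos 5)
Your overall strategy---reducing strong co-orientability of each local model to the (non)existence of an orientation-reversing local homeomorphism of the target preserving all adjacent refined classes, and then checking each class by a parity count of fiber components---is the right one, and it is essentially the inspection of degenerations that the paper itself merely gestures at by citing the figures of \cite{Saeki04}. Your codimension-one analysis and your treatment of the transverse crossings $\tilde{\mathrm{II}}^{ij}$ are correct, and the Hessian computation showing that the two fold branches at a cusp have types $\tilde{\mathrm{I}}^0$ and $\tilde{\mathrm{I}}^1$ is both correct and necessary.

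However, your decisive case $\tilde{\mathrm{II}}^a$ has a gap: showing that no admissible homeomorphism can \emph{interchange} the two branches does not yet give ``the symmetry group fixing all labels is trivial.'' The local configuration at a cusp is not a transverse crossing but a single topological arc (the cuspidal curve) through $q$, separating the disk into an inner and an outer region; an orientation-reversing homeomorphism preserving the cuspidal curve either swaps the two branches or preserves each branch, and in the latter case it necessarily interchanges the inner and outer regions (it fixes $q$, hence preserves the orientation of each branch, so reversing the ambient orientation forces it to swap the two sides of each branch). Your argument rules out only the first alternative. The second must be excluded separately, and it is: crossing either fold branch changes the number of fiber components by one, so the regular fibers over the inner and the outer region have opposite parities and lie in distinct classes $\tilde{\mathbf{0}}_{\mathrm{o}}$, $\tilde{\mathbf{0}}_{\mathrm{e}}$ with respect to $\varrho^0_{3,2}(2)$; hence no admissible homeomorphism can swap the regions either. (Concretely, over the inner region the cubic $x_2^3+y_1x_2-y_2$ has three real roots and the local fiber is an arc plus a circle, over the outer region one root and the local fiber is a single arc.) Once this missing half of the cusp case is supplied---by exactly the same parity bookkeeping you use elsewhere---your case analysis is complete and yields the asserted list.
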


The above lemma can be proved by
observing the degenerations of fibers
like those depicted in \cite[Figs.~3.5--3.8]{Saeki04}.

\begin{rmk}
If we consider $\varrho^0_{3, 2}(1)$ 
($C^0$ equivalence modulo regular components)
instead of
$\varrho^0_{3, 2}(2)$, then no
strongly co-orientable equivalence class appears.
For this reason, we have chosen the $C^0$ equivalence
modulo two circle components.
\end{rmk}

Let $\tau$ be the set of singularity types corresponding
to a regular point or a fold point. 
A smooth map between manifolds is called a
\emph{$\tau$--map} if all of its singularities
lie in $\tau$. In other words, a smooth map
is a $\tau$--map if and only if it is a
fold map in the sense of \fullref{section2}.
Let us denote by
$\tau^0(n, p)$ (or $\tau^0(n, p)^\mathrm{ori}$)
the set of all $C^0$--equivalence classes of
fibers for proper $C^0$ stable $\tau$--maps of (orientable)
$n$--dimensional manifolds into $p$--dimensional manifolds
which are Thom maps (for details, see \cite{Saeki04}).
Furthermore, let us denote by $\sigma^0(n,p)$
(or $\sigma^0(n,p)^\mathrm{ori}$) the set of all
$C^0$--equivalence classes of
fibers for proper $C^0$ stable \emph{simple} 
$\tau$--maps of (orientable)
$n$--dimensional manifolds into $p$--dimensional manifolds
which are Thom maps.
Let 
\begin{equation}
\label{eq:co}
\begin{split}
\mathcal{CO}^*(\tau^0(n, n-1), \varrho^0_{n, n-1}(2)), & \quad
\mathcal{CO}^*(\tau^0(n, n-1)^\mathrm{ori}, \varrho^0_{n, n-1}(2)), 
\\
\mathcal{CO}^*(\sigma^0(n, n-1), \varrho^0_{n, n-1}(2)) & \quad
\text{\rm and} \quad 
\mathcal{CO}^*(\sigma^0(n, n-1)^\mathrm{ori}, \varrho^0_{n, n-1}(2))
\end{split}
\end{equation}
be the universal complexes of co-orientable singular fibers 
for the respective classes of maps with respect to the
$C^0$ equivalence modulo two circle components\footnote{In 
\cite{Saeki04}, these cochain complexes are
denoted by using the
symbol ``$\mathcal{CO}$'' 
without ``$\ast$'' as superscripts.
However, in this paper, we intentionally put
the superscripts in order to distinguish them
from the corresponding \emph{chain} complexes introduced
in \fullref{section5}.}.
Note that these complexes are defined over the integers $\Z$.

Then by \fullref{lem:coori}, we see that the following equivalence
classes constitute a basis of the $\kappa$--dimensional
cochain group for all the four cochain complexes in
\eqref{eq:co} with $n = 3$,
where $\ast = \mathrm{o}$ and $\mathrm{e}$:
$$
\tilde{\mathbf{0}}_\ast
\quad (\kappa = 0), \quad
\tilde{\mathrm{I}}^0_\ast, \,
\tilde{\mathrm{I}}^1_\ast \quad
(\kappa = 1), \quad
\tilde{\mathrm{II}}^{01}_\ast \quad
(\kappa = 2).
$$
Note that $\tilde{\mathrm{II}}^a_\ast$ do not appear,
since $\tau$--maps have no cusps.
Note also that for $n = 2$, we have the same
bases for $\kappa \leq 1$.

Let us fix a co-orientation for each of the above
equivalence classes. We choose the co-orientation
for each of the equivalence classes of codimension one
such that the co-orientation points from $\tilde{\mathbf{0}}_{\mathrm{e}}$
to $\tilde{\mathbf{0}}_{\mathrm{o}}$.
Then we see that the
coboundary homomorphism 
is given by the following
formulae (for the definition of the coboundary
homomorphisms, see \cite[Chapters~7 and 8]{Saeki04}):
\begin{equation} \begin{split}
\delta_0(\tilde{\mathbf{0}}_{\mathrm{o}})
& = \tilde{\mathrm{I}}^0_{\mathrm{o}} + 
\tilde{\mathrm{I}}^0_{\mathrm{e}} +
\tilde{\mathrm{I}}^1_{\mathrm{o}} +
\tilde{\mathrm{I}}^1_{\mathrm{e}}, \\
\delta_0(\tilde{\mathbf{0}}_{\mathrm{e}})
& = -\tilde{\mathrm{I}}^0_{\mathrm{o}} - 
\tilde{\mathrm{I}}^0_{\mathrm{e}} -
\tilde{\mathrm{I}}^1_{\mathrm{o}} -
\tilde{\mathrm{I}}^1_{\mathrm{e}}, \\
\delta_1(\tilde{\mathrm{I}}^0_{\mathrm{o}})
& = \tilde{\mathrm{II}}^{01}_{\mathrm{o}} -
\tilde{\mathrm{II}}^{01}_{\mathrm{e}}, \\
\delta_1(\tilde{\mathrm{I}}^0_{\mathrm{e}})
& = \tilde{\mathrm{II}}^{01}_{\mathrm{o}} -
\tilde{\mathrm{II}}^{01}_{\mathrm{e}}, \\
\delta_1(\tilde{\mathrm{I}}^1_{\mathrm{o}})
& = -\tilde{\mathrm{II}}^{01}_{\mathrm{o}} +
\tilde{\mathrm{II}}^{01}_{\mathrm{e}}, \\
\delta_1(\tilde{\mathrm{I}}^1_{\mathrm{e}})
& =  -\tilde{\mathrm{II}}^{01}_{\mathrm{o}} +
\tilde{\mathrm{II}}^{01}_{\mathrm{e}}.
\end{split}
\label{eq:delta}
\end{equation}
In the following, we denote by $[\ast]$ the (co)homology
class represented by the (co)cycle $\ast$.
Then, by a straightforward calculation, we get the following.

\begin{lem}\label{lem:coho}
For the cohomology groups of all the four cochain complexes in
\textup{\eqref{eq:co}} with $n = 3$,
we have
\begin{align*}
H^0 & 
\cong \Z \quad \text{\textup{(}generated by
$[\tilde{\mathbf{0}}_{\mathrm{o}} + 
\tilde{\mathbf{0}}_{\mathrm{e}}]$\textup{)}, \quad and}
\\
H^1 & 
\cong \Z \oplus \Z \quad \text{\textup{(}generated by
$\alpha_1 = -[\tilde{\mathrm{I}}^0_{\mathrm{o}} +
\tilde{\mathrm{I}}^1_{\mathrm{e}}]
= [\tilde{\mathrm{I}}^0_{\mathrm{e}} +
\tilde{\mathrm{I}}^1_{\mathrm{o}}]$,
$\alpha_2
= [-\tilde{\mathrm{I}}^0_{\mathrm{o}} +
\tilde{\mathrm{I}}^0_{\mathrm{e}}]$},
\\
& 
\text{and $\alpha_3 = [\tilde{\mathrm{I}}^1_{\mathrm{o}} -
\tilde{\mathrm{I}}^1_{\mathrm{e}}]$
with $2\alpha_1 = \alpha_2 + \alpha_3$\textup{)}}. 
\end{align*}
Furthermore, for $n = 2$, the same isomorphism
holds for $H^0$, and for $H^1$, we have
\begin{align*}
H^1 &
\cong \Z \oplus \Z \oplus \Z
\quad \text{\textup{(}generated by
$\beta_1 = -[\tilde{\mathrm{I}}^0_{\mathrm{o}} +
\tilde{\mathrm{I}}^1_{\mathrm{e}}]
= [\tilde{\mathrm{I}}^0_{\mathrm{e}} +
\tilde{\mathrm{I}}^1_{\mathrm{o}}]$,
$\beta_2
= [\tilde{\mathrm{I}}^0_{\mathrm{o}}]$},
\\ &
\text{and $\beta_3 = 
[\tilde{\mathrm{I}}^1_{\mathrm{o}}]$\textup{)}}. 
\end{align*}
\end{lem}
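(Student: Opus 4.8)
The plan is to handle all four cochain complexes in \eqref{eq:co} at once. By the basis identification preceding the lemma together with \fullref{lem:coori}, for $n=3$ each of the four complexes has, in codimensions $0,1,2$, the free cochain groups
$$C^0 = \Z\langle \tilde{\mathbf{0}}_{\mathrm{o}}, \tilde{\mathbf{0}}_{\mathrm{e}}\rangle, \quad C^1 = \Z\langle \tilde{\mathrm{I}}^0_{\mathrm{o}}, \tilde{\mathrm{I}}^0_{\mathrm{e}}, \tilde{\mathrm{I}}^1_{\mathrm{o}}, \tilde{\mathrm{I}}^1_{\mathrm{e}}\rangle, \quad C^2 = \Z\langle \tilde{\mathrm{II}}^{01}_{\mathrm{o}}, \tilde{\mathrm{II}}^{01}_{\mathrm{e}}\rangle,$$
with coboundaries given by \eqref{eq:delta} (the cusp class $\tilde{\mathrm{II}}^a_\ast$ never contributes, since $\tau$--maps have no cusps). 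Since these data coincide for all four complexes, it suffices to compute the cohomology of this single three--term complex, and the whole computation is then elementary linear algebra over $\Z$.

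For $H^0$ I would simply note from \eqref{eq:delta} that $\delta_0(\tilde{\mathbf{0}}_{\mathrm{o}}) = -\delta_0(\tilde{\mathbf{0}}_{\mathrm{e}})$ is a nonzero element of $C^1$, so $\Ker\delta_0$ is the rank--one subgroup generated by $\tilde{\mathbf{0}}_{\mathrm{o}} + \tilde{\mathbf{0}}_{\mathrm{e}}$, giving $H^0\cong\Z$. For $H^1$, the image $\Image\delta_0$ is the rank--one group generated by $c := \tilde{\mathrm{I}}^0_{\mathrm{o}} + \tilde{\mathrm{I}}^0_{\mathrm{e}} + \tilde{\mathrm{I}}^1_{\mathrm{o}} + \tilde{\mathrm{I}}^1_{\mathrm{e}}$, while \eqref{eq:delta} shows that $\delta_1$ sends a cochain with coordinates $(x_1,x_2,x_3,x_4)$ to $(x_1+x_2-x_3-x_4)(\tilde{\mathrm{II}}^{01}_{\mathrm{o}} - \tilde{\mathrm{II}}^{01}_{\mathrm{e}})$; hence $\Ker\delta_1$ is the rank--three subgroup of $C^1$ cut out by $x_1+x_2=x_3+x_4$, and $H^1 = \Ker\delta_1/\langle c\rangle$.

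The one point deserving a word of justification is freeness, and this is the only real (though still routine) obstacle. Since $c$ corresponds to the coordinate vector $(1,1,1,1)$, it is primitive in $C^1\cong\Z^4$; and because $\Ker\delta_1$, being the kernel of a map into the free group $C^2$, is saturated, $c$ remains primitive inside $\Ker\delta_1$. Therefore $c$ extends to a $\Z$--basis of $\Ker\delta_1$ and the quotient $H^1$ is free of rank $3-1=2$, with no torsion. Concretely I would complete $c$ to the basis $\{c,\ \tilde{\mathrm{I}}^0_{\mathrm{o}}+\tilde{\mathrm{I}}^1_{\mathrm{o}},\ \tilde{\mathrm{I}}^0_{\mathrm{o}}+\tilde{\mathrm{I}}^1_{\mathrm{e}}\}$ of $\Ker\delta_1$, express each of $\alpha_1,\alpha_2,\alpha_3$ in its terms, and read off that their classes generate $H^1$; the relation $2\alpha_1=\alpha_2+\alpha_3$ then follows because $2\alpha_1-\alpha_2-\alpha_3$ is represented by $c\in\Image\delta_0$ (equivalently, the two displayed representatives of $\alpha_1$ differ by $c$).

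Finally, for $n=2$ the target is one--dimensional, so there is no codimension--two term: the complex becomes $C^0\xrightarrow{\delta_0}C^1$ with the same $\delta_0$ and $\delta_1=0$. Then $H^0\cong\Z$ exactly as above, while $H^1=C^1/\langle c\rangle\cong\Z^4/\langle(1,1,1,1)\rangle$, which is free of rank three by the same primitivity argument. Expressing $\beta_1,\beta_2,\beta_3$ in the basis $\{[\tilde{\mathrm{I}}^0_{\mathrm{o}}],[\tilde{\mathrm{I}}^0_{\mathrm{e}}],[\tilde{\mathrm{I}}^1_{\mathrm{o}}]\}$ of the quotient and checking that the resulting integer matrix is unimodular confirms that they form a generating set. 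As indicated, everything beyond the primitivity observation for $c$ is bookkeeping with the formulae \eqref{eq:delta}.
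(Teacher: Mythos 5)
Your computation is correct, and it is precisely the ``straightforward calculation'' from the coboundary formulae \eqref{eq:delta} that the paper invokes without writing out: the paper gives no further argument, so there is no divergence of method. The one point you rightly flag as needing care --- that $c=(1,1,1,1)$ is primitive in the saturated subgroup $\Ker\delta_1$, so the quotient is free --- is handled correctly, and your bases and the relation $2\alpha_1=\alpha_2+\alpha_3$ all check out.
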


\begin{rmk}
The above result for $n = 3$ appears to be different from
\cite[Proposition~14.3]{Saeki04}. In fact, in 
\cite{Saeki04}, opposite co-orientations are
used for $[\tilde{\mathrm{I}}^0_{\mathrm{o}}]$ and
$[\tilde{\mathrm{I}}^1_{\mathrm{e}}]$.
\end{rmk}

Let
$$s^{0 \ast}_\kappa \co H^\kappa(
\mathcal{CO}^*(\tau^0(3, 2), \varrho^0_{3, 2}(2)))
\to
H^\kappa(\mathcal{CO}^*(\tau^0(2, 1), \varrho^0_{2, 1}(2)))$$
etc.\ be the homomorphism induced by suspension\footnote{In
\cite{Saeki04}, the notation $s^0_{\kappa \ast}$ is used
instead of $s^{0 \ast}_\kappa$. However, the latter
should have been used,
since it corresponds to a pull-back. More details will
be explained in \fullref{section5}.}.
Then for $\kappa = 1$, we have
$$s^{0\ast}_1 \alpha_1 = \beta_1, \,
s^{0\ast}_1 \alpha_2 = \beta_1 - \beta_2 - \beta_3
\text{ and } 
s^{0\ast}_1 \alpha_3 = \beta_1 + \beta_2 + \beta_3.$$
In particular, we see that
$s^{0\ast}_1$ is injective and its image
is isomorphic to $\Z \oplus \Z$.

Let $f \co M \to \R$ be an arbitrary stable Morse function
on a closed surface $M$. We give the orientation to $\R$
which points to the increasing direction.
For a $\kappa$--dimensional cohomology class $\alpha$ of the
universal complex of co-orientable
singular fibers represented by
a cocycle $c$, we denote by $\alpha(f) \in H_{1-\kappa}(\R;
\Z)$ the homology class\footnote{For 
$\kappa = 0$ we consider the homology
group with closed support.}
represented by the cycle corresponding to the
closure of the set
of points in $\R$ whose associated
fiber belongs to an equivalence class appearing in $c$
(for details, see \cite[Chapter~11]{Saeki04})\footnote{More
precisely, we consider the chain with
multiplicity given by the corresponding coefficient
in $c$.}.

Then by the same argument
as in the proof of \cite[Lemma~14.1]{Saeki04},
we see that 
$$s^{0\ast}_1 \alpha_1(f) = \beta_1(f)
\in H_0(\R; \Z) \cong \Z$$ 
always vanishes
(see also \fullref{rmk:newproof} of the present
paper).
Furthermore, we have the following.

\begin{lem}
For a Morse function $f$ as above, we have
$$s^{0\ast}_1 \alpha_2(f) 
= -s^{0\ast}_1 \alpha_3(f) = \max{(f)} - \min{(f)}$$
under the natural identification
$H_0(\R; \Z) = \Z$,
where $\max{(f)}$ \textup{(}or $\min{(f)}$\textup{)}
is the number of local maxima \textup{(}resp.\ minima\textup{)}
of the Morse function $f$.
\end{lem}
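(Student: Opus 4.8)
The plan is to reduce the evaluation to a signed count of the definite-fold critical values (the local extrema), using the explicit description of $s^{0\ast}_1\alpha_2$ and $s^{0\ast}_1\alpha_3$ in terms of $\beta_1,\beta_2,\beta_3$ from \fullref{lem:coho} together with the already-established vanishing $\beta_1(f) = s^{0\ast}_1\alpha_1(f) = 0$. First I would dispose of the first equality cheaply: adding the two suspension formulas gives $s^{0\ast}_1\alpha_2 + s^{0\ast}_1\alpha_3 = (\beta_1-\beta_2-\beta_3)+(\beta_1+\beta_2+\beta_3) = 2\beta_1$, so $s^{0\ast}_1\alpha_2(f) + s^{0\ast}_1\alpha_3(f) = 2\beta_1(f) = 0$. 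This yields $s^{0\ast}_1\alpha_2(f) = -s^{0\ast}_1\alpha_3(f)$ with no further work and reduces the lemma to computing a single integer.

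Next I would rewrite the relevant cohomology class so that only the definite folds survive. Using $\beta_1 = [\tilde{\mathrm{I}}^0_{\mathrm{e}} + \tilde{\mathrm{I}}^1_{\mathrm{o}}]$, $\beta_2 = [\tilde{\mathrm{I}}^0_{\mathrm{o}}]$ and $\beta_3 = [\tilde{\mathrm{I}}^1_{\mathrm{o}}]$, the combination $s^{0\ast}_1\alpha_2 = \beta_1 - \beta_2 - \beta_3$ equals the class $[\tilde{\mathrm{I}}^0_{\mathrm{e}} - \tilde{\mathrm{I}}^0_{\mathrm{o}}]$, the two $\tilde{\mathrm{I}}^1_{\mathrm{o}}$ contributions cancelling exactly. (For $n=2$ there are no co-orientable singular fibers of codimension two, so every $1$--cochain is a cocycle and this class is well defined.) Hence $s^{0\ast}_1\alpha_2(f) = [\tilde{\mathrm{I}}^0_{\mathrm{e}} - \tilde{\mathrm{I}}^0_{\mathrm{o}}](f) \in H_0(\R;\Z)\cong\Z$, and now the only critical values contributing are the images of the definite fold points, i.e.\ the local maxima and minima of $f$; the saddles (type $\tilde{\mathrm{I}}^1$), which do not enter $\beta_2-\beta_3$ any longer, and the remaining type $\tilde{\mathrm{I}}^2$ (which does not enter $\beta_1,\beta_2,\beta_3$ at all) play no role.

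Finally I would evaluate $[\tilde{\mathrm{I}}^0_{\mathrm{o}} - \tilde{\mathrm{I}}^0_{\mathrm{e}}](f)$ directly. Fix an extremum with critical value $c$ and let $k$ be the number of circle components of a nearby regular fiber on the side having fewer components; then the singular fiber over $c$ has $k+1$ components, and crossing $c$ switches the parity of the number of components of the regular fiber, so exactly one of the two adjacent regular fibers is of type $\tilde{\mathbf{0}}_{\mathrm{e}}$ and the other of type $\tilde{\mathbf{0}}_{\mathrm{o}}$. Comparing the chosen co-orientation, which by our convention points from $\tilde{\mathbf{0}}_{\mathrm{e}}$ to $\tilde{\mathbf{0}}_{\mathrm{o}}$, with the orientation of $\R$, and keeping track of whether the singular fiber is $\tilde{\mathrm{I}}^0_{\mathrm{o}}$ or $\tilde{\mathrm{I}}^0_{\mathrm{e}}$ together with the corresponding $\pm1$ coefficient in the cocycle, one checks that the product of the coefficient and the co-orientation sign is $+1$ for every local minimum and $-1$ for every local maximum, irrespective of the parity of $k$. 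Summing then gives $[\tilde{\mathrm{I}}^0_{\mathrm{o}} - \tilde{\mathrm{I}}^0_{\mathrm{e}}](f) = \min(f) - \max(f)$, hence $s^{0\ast}_1\alpha_2(f) = \max(f)-\min(f)$ and $s^{0\ast}_1\alpha_3(f) = \min(f)-\max(f)$. The one point demanding care — and the only real obstacle — is precisely this sign bookkeeping: one must verify that the co-orientation sign and the even/odd label of the singular fiber always conspire so that the net contribution of an extremum is independent of the globally-determined parity $k$, which is exactly what makes the final answer depend only on the numbers of local maxima and minima.
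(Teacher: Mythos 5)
Your proposal is correct and follows essentially the same route as the paper: the first equality comes from the relation $2\alpha_1=\alpha_2+\alpha_3$ together with the already-noted vanishing of $\beta_1(f)$, and the value $\max(f)-\min(f)$ is obtained by the same local sign analysis at each extremum, checking that the $\pm1$ coefficient in the cocycle and the co-orientation sign (from $\tilde{\mathbf{0}}_{\mathrm{e}}$ to $\tilde{\mathbf{0}}_{\mathrm{o}}$) always combine to give a contribution independent of the parity of the nearby regular fiber. Your sign bookkeeping agrees with the paper's (a minimum with an odd fiber contributes $+1$ to $||\tilde{\mathrm{I}}^0_{\mathrm{o}}(f)||$, with an even fiber $-1$ to $||\tilde{\mathrm{I}}^0_{\mathrm{e}}(f)||$, and signs flip at maxima), so nothing further is needed.
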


\begin{proof}
Let $c \in \R$ be a value corresponding to
a local minimum of $f$. If $f^{-1}(c)$
has an odd (or even) number of components, then
it contributes $+1$ (resp.\ $-1$) to 
$||\tilde{\mathrm{I}}^0_{\mathrm{o}}(f)||$
(resp.\ 
$||\tilde{\mathrm{I}}^0_{\mathrm{e}}(f)||$),
where $||\ast||$ refers to the algebraic number
of elements, and for an equivalence
class $\tilde{\mathfrak{F}}$, $\tilde{\mathfrak{F}}(f)$
denotes the set of points in $\R$ over which
lies a singular fiber of type $\tilde{\mathfrak{F}}$.
If $c$ corresponds to a
local maximum, then the signs of contribution
change in both cases. Therefore, we have the
desired conclusion.
\end{proof}

Note that by \cite{Saeki04}, for any $1$--dimensional
cohomology class $\alpha$
of the universal complex as in \eqref{eq:co}
with $n = 3$,
$s^{0\ast}_1 \alpha(f)
\in H_0(\R; \Z) \cong \Z$ gives a fold cobordism
invariant for stable Morse functions $f$ on
closed surfaces.
By the proofs of \fullref{thm:main1} (1) and
\fullref{thm:main2} (1), 
we see that the maps
$$\Phi^{SO} \co \mathcal{M}^{SO}(2) \to \Z \quad
\text{ and } \quad S\Phi^{SO} \co \mathcal{SM}^{SO}(2)
\to \Z$$
which send the cobordism class of a stable
Morse function $f$
to $s^{0\ast}_1 \alpha_2(f) = \max{(f)} - \min{(f)}
\in \Z$ are isomorphisms.

In the unoriented case, the corresponding maps
do not give isomorphisms according to
\fullref{thm:main1} (2) and \fullref{thm:main2} (2).
In order to get isomorphisms, let us consider the
universal complexes of singular fibers
\begin{equation}
\label{eq:co2}
\mathcal{C}^*(\tau^0(n, n-1), \varrho^0_{n, n-1}(2))
\quad \text{ and } \quad
\mathcal{C}^*(\sigma^0(n, n-1), \varrho^0_{n, n-1}(2))
\end{equation}
with coefficients in $\Z_2$. (Here again,
we put ``$\ast$'' as superscripts.)

The coboundary homomorphisms for the
case of $\tau^0(3, 2)$
satisfy the following:
\begin{equation*}
\begin{split}
\delta_0(\tilde{\mathbf{0}}_{\mathrm{o}})
& = \tilde{\mathrm{I}}^0_{\mathrm{o}} + 
\tilde{\mathrm{I}}^0_{\mathrm{e}} +
\tilde{\mathrm{I}}^1_{\mathrm{o}} +
\tilde{\mathrm{I}}^1_{\mathrm{e}}, \\
\delta_0(\tilde{\mathbf{0}}_{\mathrm{e}})
& = \tilde{\mathrm{I}}^0_{\mathrm{o}} +
\tilde{\mathrm{I}}^0_{\mathrm{e}} +
\tilde{\mathrm{I}}^1_{\mathrm{o}} +
\tilde{\mathrm{I}}^1_{\mathrm{e}}, \\
\delta_1(\tilde{\mathrm{I}}^0_{\mathrm{o}})
& = \tilde{\mathrm{II}}^{01}_{\mathrm{o}} +
\tilde{\mathrm{II}}^{01}_{\mathrm{e}}, \\
\delta_1(\tilde{\mathrm{I}}^0_{\mathrm{e}})
& = \tilde{\mathrm{II}}^{01}_{\mathrm{o}} +
\tilde{\mathrm{II}}^{01}_{\mathrm{e}}, \\
\delta_1(\tilde{\mathrm{I}}^1_{\mathrm{o}})
& = \tilde{\mathrm{II}}^{01}_{\mathrm{o}} +
\tilde{\mathrm{II}}^{01}_{\mathrm{e}}, \\
\delta_1(\tilde{\mathrm{I}}^1_{\mathrm{e}})
& = \tilde{\mathrm{II}}^{01}_{\mathrm{o}} +
\tilde{\mathrm{II}}^{01}_{\mathrm{e}}, \\
\delta_1(\tilde{\mathrm{I}}^2_{\mathrm{o}})
& = \tilde{\mathrm{II}}^{02}_{\mathrm{o}} +
\tilde{\mathrm{II}}^{02}_{\mathrm{e}} +
\tilde{\mathrm{II}}^{12}_{\mathrm{o}} +
\tilde{\mathrm{II}}^{12}_{\mathrm{e}} +
\tilde{\mathrm{II}}^6_{\mathrm{o}} +
\tilde{\mathrm{II}}^6_{\mathrm{e}},
\\
\delta_1(\tilde{\mathrm{I}}^2_{\mathrm{e}})
& = \tilde{\mathrm{II}}^{02}_{\mathrm{o}} +
\tilde{\mathrm{II}}^{02}_{\mathrm{e}} +
\tilde{\mathrm{II}}^{12}_{\mathrm{o}} +
\tilde{\mathrm{II}}^{12}_{\mathrm{e}} +
\tilde{\mathrm{II}}^6_{\mathrm{o}} +
\tilde{\mathrm{II}}^6_{\mathrm{e}}.
\end{split}
\end{equation*}
For the case of 
$\sigma^0(3, 2)$, 
we obtain the formulae for the coboundary
homomorphisms by ignoring
$\tilde{\mathrm{II}}^6_\ast$
above. For the cases of $\tau^0(2, 1)$
and $\sigma^0(2, 1)$, the same formulae
hold for $\delta_0$.

By a straightforward calculation, we get the following.

\begin{lem}
For the cohomology groups of the cochain complexes in
\textup{\eqref{eq:co2}} with $n = 3$,
we have
\begin{align*}
H^0 & 
\cong \Z_2 \quad \text{\textup{(}generated by
$[\tilde{\mathbf{0}}_{\mathrm{o}} + 
\tilde{\mathbf{0}}_{\mathrm{e}}]$\textup{)}, \quad and}
\\
H^1 & 
\cong \Z_2 \oplus \Z_2 \oplus \Z_2
\quad \text{\textup{(}generated by
$\hat{\alpha}_1 = [\tilde{\mathrm{I}}^0_{\mathrm{o}} +
\tilde{\mathrm{I}}^1_{\mathrm{e}}]
= [\tilde{\mathrm{I}}^0_{\mathrm{e}} +
\tilde{\mathrm{I}}^1_{\mathrm{o}}]$,} \\
& \text{$\hat{\alpha}_2
= [\tilde{\mathrm{I}}^0_{\mathrm{o}} +
\tilde{\mathrm{I}}^0_{\mathrm{e}}] =
[\tilde{\mathrm{I}}^1_{\mathrm{o}} +
\tilde{\mathrm{I}}^1_{\mathrm{e}}]$
and $\hat{\alpha}_3
= [\tilde{\mathrm{I}}^2_{\mathrm{o}} +
\tilde{\mathrm{I}}^2_{\mathrm{e}}]$\textup{)}}. 
\end{align*}
Furthermore, for $n = 2$, the same isomorphism
holds for $H^0$, and for $H^1$, we have
\begin{align*}
H^1 &
\cong \Z_2 \oplus \Z_2 \oplus \Z_2 \oplus \Z_2
\oplus \Z_2
\quad \text{\textup{(}generated by
$\hat{\beta}_1 = [\tilde{\mathrm{I}}^0_{\mathrm{o}} +
\tilde{\mathrm{I}}^1_{\mathrm{e}}]
= [\tilde{\mathrm{I}}^0_{\mathrm{e}} +
\tilde{\mathrm{I}}^1_{\mathrm{o}}]$,}
\\ &
\text{$\hat{\beta}_2
= [\tilde{\mathrm{I}}^0_{\mathrm{o}}]$,
$\hat{\beta}_3 = 
[\tilde{\mathrm{I}}^1_{\mathrm{o}}]$,
$\hat{\beta}_4 = 
[\tilde{\mathrm{I}}^2_{\mathrm{o}}]$
and $\hat{\beta}_5 = 
[\tilde{\mathrm{I}}^2_{\mathrm{e}}]$\textup{)}}. 
\end{align*}
\end{lem}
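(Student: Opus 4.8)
The statement is purely a matter of linear algebra over $\Z_2$: once the coboundary formulae above are in hand, the cohomology can be read off directly, so the plan is simply to carry out that computation cleanly and to verify that the displayed classes are the claimed bases. Write $v = \tilde{\mathrm{I}}^0_{\mathrm{o}} + \tilde{\mathrm{I}}^0_{\mathrm{e}} + \tilde{\mathrm{I}}^1_{\mathrm{o}} + \tilde{\mathrm{I}}^1_{\mathrm{e}}$ for the common value of $\delta_0(\tilde{\mathbf{0}}_{\mathrm{o}})$ and $\delta_0(\tilde{\mathbf{0}}_{\mathrm{e}})$. First I would treat $H^0$: since these two coboundaries agree, $\Ker\delta_0$ is spanned by $\tilde{\mathbf{0}}_{\mathrm{o}} + \tilde{\mathbf{0}}_{\mathrm{e}}$, and as there is no lower-degree group this gives $H^0 \cong \Z_2$ with the asserted generator. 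Because the $\delta_0$ formula is the same for $n = 2$, $n = 3$ and for both $\tau^0$ and $\sigma^0$, this one computation settles $H^0$ in every case.

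Next I would compute $H^1$ for $n = 3$. Here $\Image\delta_0 = \langle v\rangle$ is one-dimensional, while $\Image\delta_1$ is spanned by the two nonzero values $\delta_1(\tilde{\mathrm{I}}^0_{\mathrm{o}}) = \tilde{\mathrm{II}}^{01}_{\mathrm{o}} + \tilde{\mathrm{II}}^{01}_{\mathrm{e}}$, the common image of all four of $\tilde{\mathrm{I}}^0_\ast, \tilde{\mathrm{I}}^1_\ast$, and $\delta_1(\tilde{\mathrm{I}}^2_{\mathrm{o}})$, the common image of $\tilde{\mathrm{I}}^2_{\mathrm{o}}$ and $\tilde{\mathrm{I}}^2_{\mathrm{e}}$. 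These two images have disjoint supports, so $\dim\Image\delta_1 = 2$ and $\dim\Ker\delta_1 = 6 - 2 = 4$; explicitly a $1$--cochain is a cocycle exactly when the four coefficients on $\tilde{\mathrm{I}}^0_\ast, \tilde{\mathrm{I}}^1_\ast$ sum to $0$ and the two coefficients on $\tilde{\mathrm{I}}^2_\ast$ sum to $0$. Thus $\dim H^1 = 4 - 1 = 3$. It then remains to observe that $\hat{\alpha}_1, \hat{\alpha}_2, \hat{\alpha}_3$ satisfy these two cocycle conditions and that, together with $v$, they span $\Ker\delta_1$ --- a single $\Z_2$--rank check in the coordinates $(\tilde{\mathrm{I}}^0_{\mathrm{o}}, \tilde{\mathrm{I}}^0_{\mathrm{e}}, \tilde{\mathrm{I}}^1_{\mathrm{o}}, \tilde{\mathrm{I}}^1_{\mathrm{e}}, \tilde{\mathrm{I}}^2_{\mathrm{o}}, \tilde{\mathrm{I}}^2_{\mathrm{e}})$ --- and to note that the two alternative expressions for $\hat{\alpha}_1$ and $\hat{\alpha}_2$ differ from the first ones by $v \in \Image\delta_0$.

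The case $n = 2$ is where the one genuine structural point enters, and I would flag it explicitly. Now $p = n - 1 = 1$, so the target is $1$--dimensional and there are no singular fibers of codimension $2$; the complex therefore stops in degree $1$ and $\delta_1 = 0$. Hence $\Ker\delta_1$ is the entire $6$--dimensional degree-$1$ cochain group (with the same type-$\mathrm{I}$ basis as for $n = 3$), and $H^1$ is the $5$--dimensional quotient by $\langle v\rangle$. To finish I would check that $\hat{\beta}_1, \dots, \hat{\beta}_5$ together with $v$ span this cochain group --- again one $\Z_2$--rank computation in the same six coordinates --- so that their classes form a basis of $H^1 \cong \Z_2^{\oplus 5}$, the alternative form of $\hat{\beta}_1$ once more differing by $v$.

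Finally, the simple complex $\sigma^0(3,2)$ requires no separate argument: its coboundary is obtained from that of $\tau^0(3,2)$ by deleting the $\tilde{\mathrm{II}}^6_\ast$ terms, which leaves the two generating images of $\delta_1$ nonzero and still of disjoint support, so $\dim\Image\delta_1 = 2$ as before and every count is unchanged. I do not expect any real obstacle here, since the whole lemma is bookkeeping over $\Z_2$; the one thing to get right is the dimension jump from $n = 3$ to $n = 2$, which comes entirely from the vanishing of $\delta_1$ once the target drops to dimension $1$: the two cocycle conditions present for $n = 3$ (on the $\tilde{\mathrm{I}}^0_\ast, \tilde{\mathrm{I}}^1_\ast$ coefficients and on the $\tilde{\mathrm{I}}^2_\ast$ coefficients) both disappear, each contributing one extra $\Z_2$ summand to the $n = 2$ answer.
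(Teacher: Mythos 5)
Your proposal is correct and coincides with what the paper does: the paper states the coboundary formulae and then asserts the lemma follows ``by a straightforward calculation,'' which is exactly the $\Z_2$--linear algebra you carry out (kernel/image dimension counts, the disjoint-support observation for the two nonzero values of $\delta_1$, and the rank checks showing the listed classes together with $\delta_0(\tilde{\mathbf{0}}_{\mathrm{o}})$ span the cocycles). Your explicit flagging of the vanishing of $\delta_1$ for $n=2$ and of the unchanged count for $\sigma^0(3,2)$ is a faithful elaboration of the same argument.
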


We can also describe the homomorphisms
induced by suspension with respect to the
above generators.

Let $f \co M \to \R$ be a stable
Morse function on a closed surface $M$.
Then we see that
$s^{0\ast}_1 \hat{\alpha}_1(f)
\in H_0(\R; \Z_2) \cong \Z_2$
always vanishes as before. Furthermore,
$s^{0\ast}_1 \hat{\alpha}_2(f)$
coincides with $\min{(f)} + \max{(f)}$
modulo two.
Finally,
$s^{0\ast}_1 \hat{\alpha}_3(f)$
gives the number of singular fibers of
type $\tilde{\mathrm{I}}^2$ of $f$.
Therefore, according to the proofs of
\fullref{thm:main1} (2) and
\fullref{thm:main2} (2),
we see that the homomorphisms
$$\Phi \co \mathcal{M}(2) \to \Z \oplus \Z_2 \quad
\text{ and } \quad S\Phi \co \mathcal{SM}(2)
\to \Z \oplus \Z_2$$
which send the cobordism class of a stable
Morse function $f$ to 
$$(s^{0\ast}_1 \alpha_2(f),
s^{0\ast}_1 \hat{\alpha}_3(f)) = 
(\max{(f)} - \min{(f)}, |\tilde{\mathrm{I}}^2(f)|)
\in \Z \oplus \Z_2$$ 
are isomorphisms,
where $|\ast|$ denotes the number of elements modulo two.

Note that by \cite[Corollary~2.4]{Saeki04},
$|\tilde{\mathrm{I}}^2(f)| \in \Z_2$ coincides
with the parity of the Euler characteristic
$\chi(M)$ of the surface $M$.

As the above observations show, the cohomology
classes of universal complexes of singular fibers
can give complete cobordism invariants
for singular maps.

\section{Universal homology complex of singular 
fibers}\label{section5}

In the previous section, we have seen that
cohomology classes of the universal complexes of
singular fibers give
rise to complete cobordism invariants
in our situations.
In order to construct such invariants in the
unoriented case, we had to combine the universal
complex of co-orientable singular fibers and
that of usual singular fibers which are not
necessarily co-orientable. 

In \cite{Kazarian}, Kazarian introduced
the notion of a universal \emph{homology complex}
of singularities, which combines the universal
cohomology complex of co-orientable singularities and
that of usual (not necessarily co-orientable)
singularities, and which is constructed
by reversing the arrows. In this section, we will
pursue the same procedure in our situation
of singular fibers.

Let us consider the case of proper $C^0$ stable fold
maps of (possibly nonorientable)
$n$--dimensional manifolds into 
$(n-1)$--dimensional manifolds.
(The case of simple maps or that of
maps of oriented manifolds can be treated similarly.)
Let
$$
\mathcal{C}_*(\tau^0(n, n-1), \varrho^0_{n, n-1}(2))
$$
be the chain complex defined as follows.
For each $\kappa$, the $\kappa$--dimensional
chain group, denoted by $C_\kappa(\tau^0(n, n-1), 
\varrho^0_{n, n-1}(2))$,
is the direct sum, over all equivalence classes
of singular fibers of codimension $\kappa$
with respect to $\varrho^0_{n, n-1}(2)$,
of the groups $\Z$ for co-orientable classes and
the groups $\Z_2$ for non co-orientable classes, and we
denote the generators by using the same
symbols for the corresponding equivalence classes of singular fibers.

Let $\mathfrak{F}$ and $\mathfrak{G}$ be two equivalence
classes of singular fibers such that $\kappa(\mathfrak{F})
= \kappa(\mathfrak{G}) + 1$, where $\kappa$ denotes the
codimension.
Let $f \co M \to N$ be a proper $C^0$ stable fold map
of an $n$--dimensional manifold into an $(n-1)$--dimensional
manifold which is a Thom map. 
Let us denote by $\mathfrak{F}(f)$ (or
$\mathfrak{G}(f)$) the set of points in $N$
over which lies a singular fiber of type $\mathfrak{F}$
(resp.\ $\mathfrak{G}$). Note that $\mathfrak{F}(f)$
and $\mathfrak{G}(f)$ are submanifolds
of $N$ of codimensions $\kappa(\mathfrak{F})$ and
$\kappa(\mathfrak{G})$ respectively.
Let us consider a point $q \in \mathfrak{F}(f)$
and a small disk $D_q$ of dimension $\kappa(\mathfrak{F})$
centered at $q$ which intersects $\mathfrak{F}(f)$
transversely exactly at $q$.
Then $\mathfrak{G}(f)$ cuts $D_q$
in a finite set of curves. If $\mathfrak{G}$ is not
co-orientable, then we define $[\mathfrak{G}:
\mathfrak{F}] \in \Z_2$ as the parity of the
number of these curves. If
$\mathfrak{G}$ is co-orientable, then the chosen
co-orientation of $\mathfrak{G}$ together with
the chosen orientation of $D_q$
allows us to define a sign for each
curve. We define $[\mathfrak{G}:
\mathfrak{F}] \in \Z$ as the algebraic
number of these curves, counted with signs.
Note that if $\mathfrak{G}$ is co-orientable and
$\mathfrak{F}$ is \emph{not} co-orientable,
then we always have $[\mathfrak{G}: \mathfrak{F}]
= 0$. Note also that the incidence coefficient
$[\mathfrak{G}: \mathfrak{F}]$ thus defined does not depend on
the choice of $q$ etc.\ and is well-defined for all
the above cases.

Now the boundary homomorphism
$$\partial_\kappa \co C_\kappa(\tau^0(n, n-1), \varrho^0_{n, n-1}(2))
\to C_{\kappa-1}(\tau^0(n, n-1), \varrho^0_{n, n-1}(2))$$
is defined by the formula
$$\partial_\kappa(\mathfrak{F}) = \sum_{\kappa(\mathfrak{G})
= \kappa(\mathfrak{F}) - 1}[\mathfrak{G}:
\mathfrak{F}] \mathfrak{G}$$
for the generators $\mathfrak{F}$ of
$C_\kappa(\tau^0(n, n-1), \varrho^0_{n, n-1}(2))$.
Note that this is a well-defined homomorphism.

It is easy to check that $\partial_{\kappa-1}
\circ \partial_\kappa = 0$ as in 
\cite{Saeki04,V}.
The chain complex
\begin{equation}
\mathcal{C}_*(\tau^0(n, n-1), \varrho^0_{n, n-1}(2))
= (C_\kappa(\tau^0(n, n-1), \varrho^0_{n, n-1}(2)),
\partial_\kappa)_\kappa
\label{eq:c*}
\end{equation}
thus constructed is called the
\emph{universal homology complex of singular
fibers} for $C^0$ stable fold maps of
$n$--dimensional manifolds into $(n-1)$--dimensional
manifolds.

\begin{rmk}
In the definition of the universal
complex given in \cite{Saeki04},
we have formally allowed infinite sums
as elements of the cochain groups. However,
for the universal homology complex that
we have defined here, we consider the
\emph{direct sum} of some copies of
$\Z$ and $\Z_2$,
and we do not allow infinite sums.
Therefore, the boundary homomorphism
is well-defined.
\end{rmk}

As in \cite{Kazarian}, we can check that
the universal cochain complex of singular fibers
$$\mathcal{C}^*(\tau^0(n, n-1), \varrho^0_{n, n-1}(2))$$
and the universal cochain complex of co-orientable
singular fibers
$$\mathcal{CO}^*(\tau^0(n, n-1), \varrho^0_{n, n-1}(2))$$
as defined in \cite{Saeki04}
are isomorphic to
$$\mathrm{Hom}(\mathcal{C}_*(\tau^0(n, n-1), 
\varrho^0_{n, n-1}(2)), \Z_2)$$
and
$$\mathrm{Hom}(\mathcal{C}_*(\tau^0(n, n-1), 
\varrho^0_{n, n-1}(2)), \Z)$$
respectively.
In this sense, the universal homology complex
\eqref{eq:c*} unifies the universal complex
of usual singular fibers with coefficients
in $\Z_2$ and that of co-orientable ones
with coefficients in $\Z$.

Let us proceed to the explicit calculation
in the case of $n = 3$.
The generators of $C_\kappa(\tau^0(3, 2), \varrho^0_{3, 2}(2))$
are as given in Table~\ref{table1} (see 
also \fullref{fig43}).

\begin{table}[b]
\centering
\caption{Generators of $C_\kappa(\tau^0(3, 2), \varrho^0_{3, 2}(2))$}
\label{table1}
\begin{tabular}{|c||c|c|}
\hline 
$\kappa$ & $\Z$ & $\Z_2$ \\ \hline \hline
$0$ & \rule[0.5cm]{0cm}{0cm}$\tilde{\mathbf{0}}_{\mathrm{o}},
\tilde{\mathbf{0}}_{\mathrm{e}}$ & \\ \hline
$1$ & \rule[0.5cm]{0cm}{0cm}$\tilde{\mathrm{I}}^0_{\mathrm{o}},
\tilde{\mathrm{I}}^0_{\mathrm{e}},
\tilde{\mathrm{I}}^1_{\mathrm{o}},
\tilde{\mathrm{I}}^1_{\mathrm{e}}$ &
$\tilde{\mathrm{I}}^2_{\mathrm{o}},
\tilde{\mathrm{I}}^2_{\mathrm{e}}$ \\ \hline
$2$ & 
\rule[0.5cm]{0cm}{0cm}$\tilde{\mathrm{II}}^{01}_{\mathrm{o}},
\tilde{\mathrm{II}}^{01}_{\mathrm{e}}$
&
$\tilde{\mathrm{II}}^{00}_{\mathrm{o}},
\tilde{\mathrm{II}}^{00}_{\mathrm{e}},
\tilde{\mathrm{II}}^{11}_{\mathrm{o}},
\tilde{\mathrm{II}}^{11}_{\mathrm{e}},
\tilde{\mathrm{II}}^{02}_{\mathrm{o}},
\tilde{\mathrm{II}}^{02}_{\mathrm{e}},
\tilde{\mathrm{II}}^{12}_{\mathrm{o}},
\tilde{\mathrm{II}}^{12}_{\mathrm{e}},
\tilde{\mathrm{II}}^{22}_{\mathrm{o}},
\tilde{\mathrm{II}}^{22}_{\mathrm{e}}$, \\
& & 
$\tilde{\mathrm{II}}^3_{\mathrm{o}},
\tilde{\mathrm{II}}^3_{\mathrm{e}},
\tilde{\mathrm{II}}^4_{\mathrm{o}},
\tilde{\mathrm{II}}^4_{\mathrm{e}},
\tilde{\mathrm{II}}^5_{\mathrm{o}},
\tilde{\mathrm{II}}^5_{\mathrm{e}},
\tilde{\mathrm{II}}^6_{\mathrm{o}},
\tilde{\mathrm{II}}^6_{\mathrm{e}},
\tilde{\mathrm{II}}^7_{\mathrm{o}},
\tilde{\mathrm{II}}^7_{\mathrm{e}}$
\\ \hline
\end{tabular}
\end{table}

The boundary homomorphisms are given as follows.
\begin{equation}
\label{eq:boundary}
\begin{split}
&
\tilde{\mathrm{II}}^{01}_{\mathrm{o}} \mapsto
\tilde{\mathrm{I}}^0_{\mathrm{o}} +
\tilde{\mathrm{I}}^0_{\mathrm{e}} -
\tilde{\mathrm{I}}^1_{\mathrm{o}} -
\tilde{\mathrm{I}}^1_{\mathrm{e}}, \\ &
\tilde{\mathrm{II}}^{01}_{\mathrm{e}} \mapsto
-\tilde{\mathrm{I}}^0_{\mathrm{o}} - 
\tilde{\mathrm{I}}^0_{\mathrm{e}} +
\tilde{\mathrm{I}}^1_{\mathrm{o}} +
\tilde{\mathrm{I}}^1_{\mathrm{e}}, \\ &
\tilde{\mathrm{II}}^{00}_{\mathrm{o}},
\tilde{\mathrm{II}}^{00}_{\mathrm{e}},
\tilde{\mathrm{II}}^{11}_{\mathrm{o}},
\tilde{\mathrm{II}}^{11}_{\mathrm{e}},
\tilde{\mathrm{II}}^{22}_{\mathrm{o}},
\tilde{\mathrm{II}}^{22}_{\mathrm{e}},
\tilde{\mathrm{II}}^3_{\mathrm{o}},
\tilde{\mathrm{II}}^3_{\mathrm{e}},
\tilde{\mathrm{II}}^4_{\mathrm{o}},
\tilde{\mathrm{II}}^4_{\mathrm{e}},
\tilde{\mathrm{II}}^5_{\mathrm{o}},
\tilde{\mathrm{II}}^5_{\mathrm{e}},
\tilde{\mathrm{II}}^7_{\mathrm{o}},
\tilde{\mathrm{II}}^7_{\mathrm{e}}
\mapsto 0, \\ &
\tilde{\mathrm{II}}^{02}_{\mathrm{o}},
\tilde{\mathrm{II}}^{02}_{\mathrm{e}},
\tilde{\mathrm{II}}^{12}_{\mathrm{o}},
\tilde{\mathrm{II}}^{12}_{\mathrm{e}},
\tilde{\mathrm{II}}^6_{\mathrm{o}},
\tilde{\mathrm{II}}^6_{\mathrm{e}}
\mapsto
\tilde{\mathrm{I}}^2_{\mathrm{o}} +
\tilde{\mathrm{I}}^2_{\mathrm{e}}, \\ &
\tilde{\mathrm{I}}^0_{\mathrm{o}},
\tilde{\mathrm{I}}^0_{\mathrm{e}},
\tilde{\mathrm{I}}^1_{\mathrm{o}},
\tilde{\mathrm{I}}^1_{\mathrm{e}}
\mapsto 
\tilde{\mathbf{0}}_{\mathrm{o}} - 
\tilde{\mathbf{0}}_{\mathrm{e}}, \\ &
\tilde{\mathrm{I}}^2_{\mathrm{o}},
\tilde{\mathrm{I}}^2_{\mathrm{e}}
\mapsto 0.
\end{split}
\end{equation}
Then a straightforward calculation 
shows the following.

\begin{lem}
For the homology groups of the chain complex
$$\mathcal{C}_*(\tau^0(3, 2), \varrho^0_{3, 2}(2)),$$
we have
\begin{align*}
H_0 & 
\cong \Z \quad \text{\textup{(}generated by
$[\tilde{\mathbf{0}}_{\mathrm{o}}] = 
[\tilde{\mathbf{0}}_{\mathrm{e}}]$\textup{)}, \quad and}
\\
H_1 & 
\cong 
\Z \oplus \Z \oplus \Z_2
\quad \text{\textup{(}generated by
$\tilde{\alpha}_1 = [\tilde{\mathrm{I}}^0_{\mathrm{o}} -
\tilde{\mathrm{I}}^1_{\mathrm{e}}]
= [\tilde{\mathrm{I}}^1_{\mathrm{o}} -
\tilde{\mathrm{I}}^0_{\mathrm{e}}]$,} \\
& \text{$\tilde{\alpha}_2
= [-\tilde{\mathrm{I}}^0_{\mathrm{o}} +
\tilde{\mathrm{I}}^0_{\mathrm{e}}]$
and $\tilde{\alpha}_3
= [\tilde{\mathrm{I}}^2_{\mathrm{o}}] =
[\tilde{\mathrm{I}}^2_{\mathrm{e}}]$\textup{)}}. 
\end{align*}
\end{lem}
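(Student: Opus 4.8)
The plan is to compute the homology of the complex directly from the explicit boundary formulae \eqref{eq:boundary}, using the fact that these formulae respect the splitting of each chain group into its co-orientable ($\Z$) summand and its non-co-orientable ($\Z_2$) summand. Indeed, since $[\mathfrak{G}:\mathfrak{F}] = 0$ whenever $\mathfrak{G}$ is co-orientable but $\mathfrak{F}$ is not, each $\partial_\kappa$ is a direct sum of an integral boundary map and a mod-two boundary map; I can therefore analyse the two summands independently and then reassemble. The complex lives only in degrees $0,1,2$ (the target is $2$--dimensional), so $H_0 = C_0/\Image\,\partial_1$ and $H_1 = \Ker\,\partial_1/\Image\,\partial_2$.

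For $H_0$, I read off from \eqref{eq:boundary} that every codimension-one co-orientable generator $\tilde{\mathrm{I}}^0_*,\tilde{\mathrm{I}}^1_*$ maps to $\tilde{\mathbf{0}}_{\mathrm{o}} - \tilde{\mathbf{0}}_{\mathrm{e}}$, while $\tilde{\mathrm{I}}^2_*$ maps to $0$. Hence $\Image\,\partial_1$ is the primitive cyclic subgroup of $C_0 \cong \Z^2$ generated by $\tilde{\mathbf{0}}_{\mathrm{o}} - \tilde{\mathbf{0}}_{\mathrm{e}}$, and the quotient is $\Z$ generated by the common class $[\tilde{\mathbf{0}}_{\mathrm{o}}] = [\tilde{\mathbf{0}}_{\mathrm{e}}]$.

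For $H_1$ I first describe the cycle group. On the $\Z$--summand, $a\tilde{\mathrm{I}}^0_{\mathrm{o}} + b\tilde{\mathrm{I}}^0_{\mathrm{e}} + c\tilde{\mathrm{I}}^1_{\mathrm{o}} + d\tilde{\mathrm{I}}^1_{\mathrm{e}}$ is a cycle exactly when $a+b+c+d = 0$, giving a rank-three sublattice of $\Z^4$ with basis $v_1 = \tilde{\mathrm{I}}^0_{\mathrm{o}} - \tilde{\mathrm{I}}^1_{\mathrm{e}}$, $v_2 = -\tilde{\mathrm{I}}^0_{\mathrm{o}} + \tilde{\mathrm{I}}^0_{\mathrm{e}}$ and $v_3 = -\tilde{\mathrm{I}}^0_{\mathrm{o}} + \tilde{\mathrm{I}}^1_{\mathrm{o}}$; on the $\Z_2$--summand both $\tilde{\mathrm{I}}^2_{\mathrm{o}}, \tilde{\mathrm{I}}^2_{\mathrm{e}}$ are cycles. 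From \eqref{eq:boundary} the image of $\partial_2$ is generated, in the $\Z$--summand, by the single relation $w = \tilde{\mathrm{I}}^0_{\mathrm{o}} + \tilde{\mathrm{I}}^0_{\mathrm{e}} - \tilde{\mathrm{I}}^1_{\mathrm{o}} - \tilde{\mathrm{I}}^1_{\mathrm{e}}$ coming from $\tilde{\mathrm{II}}^{01}_*$, and in the $\Z_2$--summand by $\tilde{\mathrm{I}}^2_{\mathrm{o}} + \tilde{\mathrm{I}}^2_{\mathrm{e}}$ coming from $\tilde{\mathrm{II}}^{02}_*,\tilde{\mathrm{II}}^{12}_*,\tilde{\mathrm{II}}^6_*$, all other codimension-two generators being $2$--cycles. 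The $\Z_2$ quotient is then immediately $\Z_2$ generated by $\tilde{\alpha}_3 = [\tilde{\mathrm{I}}^2_{\mathrm{o}}] = [\tilde{\mathrm{I}}^2_{\mathrm{e}}]$.

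The only step that requires genuine, if elementary, attention is the $\Z$--summand quotient of the rank-three cycle lattice by the single relation $w$. Here I would check that $w = v_1 + v_2 - v_3$, so that $w$ is primitive and can be completed to a basis of the lattice; the quotient is consequently \emph{free abelian of rank two}, generated by the images $\tilde{\alpha}_1 = [v_1]$ and $\tilde{\alpha}_2 = [v_2]$. This same identity yields the relation $[\tilde{\mathrm{I}}^0_{\mathrm{o}} - \tilde{\mathrm{I}}^1_{\mathrm{e}}] = [\tilde{\mathrm{I}}^1_{\mathrm{o}} - \tilde{\mathrm{I}}^0_{\mathrm{e}}]$ recorded in the statement, since the difference of the two representatives is exactly $w$. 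Assembling the $\Z\oplus\Z$ from the co-orientable part with the $\Z_2$ from the remaining part gives $H_1 \cong \Z \oplus \Z \oplus \Z_2$, as claimed; alternatively one may simply put the two integer incidence matrices into Smith normal form and read off the invariant factors.
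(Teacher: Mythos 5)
Your computation is correct and is exactly the ``straightforward calculation'' from the boundary formulae \eqref{eq:boundary} that the paper leaves to the reader: the identification of the cycle lattice, the primitivity of $w = v_1 + v_2 - v_3$, and the resulting $\Z \oplus \Z \oplus \Z_2$ all check out. The only nuance is that the cited vanishing $[\mathfrak{G}:\mathfrak{F}]=0$ rules out a $\Z_2$ generator hitting a $\Z$ generator but not a priori the reverse; in this complex, however, the explicit formulae \eqref{eq:boundary} show the boundary does split as you claim, so the argument stands.
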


Note that for $H_1$, we can replace $\tilde{\alpha}_2$ by
$$\tilde{\alpha}'_2 = [-\tilde{\mathrm{I}}^1_{\mathrm{o}} +
\tilde{\mathrm{I}}^1_{\mathrm{e}}],$$
since we have the relation $-2\tilde{\alpha}_1
= \tilde{\alpha}_2 + \tilde{\alpha}'_2$.

In order to consider the hypercohomologies, 
in the sense of \cite{Kazarian}, 
of the universal homology complex constructed above, 
let us consider a free
approximation $\mathcal{F}$ of
$$\mathcal{V} = 
\mathcal{C}_*(\tau^0(3, 2), \varrho^0_{3, 2}(2))$$
(for the definition of a free approximation,
see \cite[Chapter 5, Section 2]{Spanier}
or \cite{Kazarian}). For each $\kappa$, let us denote the
$\kappa$--dimensional cochain group of $\mathcal{F}$
by $F_\kappa$. Then
the generators of the free abelian
group $F_\kappa$, $\kappa = 0, 1, 2$,
are given by the elements corresponding
to those of $\mathcal{V}$, except that
we need to add one generator $A$ to $F_2$.
We denote the corresponding generators by the same
symbols as in Table~\ref{table1}.
The boundary homomorphism $\partial_\kappa
\co F_\kappa \to F_{\kappa-1}$ is given
by the same formulae as in \eqref{eq:boundary}
and by 
$$\partial_2(A) = 2 \,
\tilde{\mathrm{I}}^2_{\mathrm{o}}.$$
Furthermore, the epimorphism
$\lambda \co \mathcal{F} \to \mathcal{V}$
is naturally defined by the obvious
correspondence together with
$\lambda(A) = 0$.
It is straightforward to check that
$\lambda$ is a chain map and the induced homomorphism
$\lambda_* \co H_*(\mathcal{F}; \Z) 
\to H_*(\mathcal{V}; \Z)$ is
an isomorphism.

For an abelian group $G$,
the \emph{hypercohomology} $\mathbb{H}^*(\mathcal{V}; G)$
of $\mathcal{V}$
with coefficients in $G$ is, by definition,
$H^*(\mathcal{F}; G)$. This is well-defined and
depends only on $\mathcal{V}$ and $G$
(for details, see \cite{Spanier}).

Recall the canonical isomorphisms:
\begin{equation*}
\begin{split}
\mathcal{C}^*(\tau^0(3, 2), \varrho^0_{3, 2}(2))
& \cong \mathrm{Hom}(\mathcal{V}, \Z_2) \quad \text{ and} \\
\mathcal{CO}^*(\tau^0(3, 2), \varrho^0_{3, 2}(2))
& \cong \mathrm{Hom}(\mathcal{V}, \Z).
\end{split}
\end{equation*}
Then a straightforward calculation shows
the following.

\begin{lem}\label{lem:isom}
The following homomorphisms induced by $\lambda$
are both isomorphisms for $\kappa = 0, 1$:
\begin{multline*}
H^\kappa(\mathcal{C}^*(\tau^0(3, 2), \varrho^0_{3, 2}(2))
= H^\kappa(\mathcal{V}; \Z_2)
\\ \to H^\kappa(\mathcal{F}; \Z_2) 
= \mathbb{H}^\kappa(\mathcal{C}_*(\tau^0(3, 2), 
\varrho^0_{3, 2}(2)); \Z_2), 
\end{multline*}
\begin{multline*}
H^\kappa(\mathcal{CO}^*(\tau^0(3, 2), \varrho^0_{3, 2}(2))
= H^\kappa(\mathcal{V}; \Z)
\\ \to H^\kappa(\mathcal{F}; \Z)
= \mathbb{H}^\kappa(\mathcal{C}_*(\tau^0(3, 2), 
\varrho^0_{3, 2}(2)); \Z).
\end{multline*}
\end{lem}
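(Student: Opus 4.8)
The plan is to deduce both isomorphisms from the general principle that, for a chain complex $C$ whose degree-zero group $C_0$ and whose homology $H_0$ are both free abelian, the Kronecker (evaluation) homomorphism
$$e^\kappa_C \colon H^\kappa(\mathrm{Hom}(C, G)) \to \mathrm{Hom}(H_\kappa(C), G)$$
is an isomorphism in degrees $\kappa = 0, 1$ for every coefficient group $G$. Both $\mathcal{V} = \mathcal{C}_*(\tau^0(3,2), \varrho^0_{3,2}(2))$ and its free approximation $\mathcal{F}$ satisfy these hypotheses: in each case the degree-zero group is the free group on $\tilde{\mathbf{0}}_{\mathrm{o}}, \tilde{\mathbf{0}}_{\mathrm{e}}$ (see Table~\ref{table1}), and $H_0 \cong \Z$ by the homology computation above. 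I would first record this evaluation principle and then combine it with naturality along $\lambda$.

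For the principle itself, write $Z_1 = \Ker \partial_1$, $B_0 = \Image \partial_1 \subseteq C_0$ and $B_1 = \Image \partial_2$. Since $C_0$ is free, $B_0$ is free, so the extension $0 \to Z_1 \to C_1 \to B_0 \to 0$ splits; this lets one extend any homomorphism $Z_1 \to G$ over $C_1$ and shows that $e^1_C$ is surjective. Since $H_0$ is free, the extension $0 \to B_0 \to C_0 \to H_0 \to 0$ splits; this lets one extend any homomorphism $B_0 \to G$ over $C_0$ and shows that $e^1_C$ is injective, because a $1$--cocycle vanishing on $Z_1$ factors through $B_0$ and is therefore a coboundary. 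The degree-zero statement $H^0(\mathrm{Hom}(C,G)) = \mathrm{Hom}(H_0(C), G)$ holds for any complex. For $\mathcal{F}$ this principle is of course just the universal coefficient theorem, together with $\mathrm{Ext}(H_0(\mathcal{F}), G) = \mathrm{Ext}(\Z, G) = 0$.

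Finally I would invoke naturality of the Kronecker map: the square with horizontal maps $e^\kappa_{\mathcal{V}}, e^\kappa_{\mathcal{F}}$ and vertical maps $\lambda^*$ and $(\lambda_*)^*$ commutes. Since $\lambda_* \colon H_\kappa(\mathcal{F}) \to H_\kappa(\mathcal{V})$ is an isomorphism for all $\kappa$ (this is part of the definition of the free approximation $\lambda$), the induced map $(\lambda_*)^* \colon \mathrm{Hom}(H_\kappa(\mathcal{V}), G) \to \mathrm{Hom}(H_\kappa(\mathcal{F}), G)$ is an isomorphism; combined with the fact that $e^\kappa_{\mathcal{V}}$ and $e^\kappa_{\mathcal{F}}$ are isomorphisms for $\kappa = 0, 1$, this forces $\lambda^* = (e^\kappa_{\mathcal{F}})^{-1} \circ (\lambda_*)^* \circ e^\kappa_{\mathcal{V}}$ to be an isomorphism, which is exactly the assertion (taking $G = \Z_2$ for the first map and $G = \Z$ for the second).

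The step I expect to be the real content, rather than a formality, is the injectivity and surjectivity of $e^1_{\mathcal{V}}$, since $\mathcal{V}$ is \emph{not} free — it carries the $\Z_2$ summands $\tilde{\mathrm{I}}^2_{\mathrm{o}}, \tilde{\mathrm{I}}^2_{\mathrm{e}}$ in degree one — so the ordinary universal coefficient theorem cannot be applied to $\mathcal{V}$ directly. What rescues the argument in degrees $0$ and $1$ is precisely that $C_0(\mathcal{V})$ and $H_0(\mathcal{V})$ are free; this is also why the lemma is restricted to $\kappa = 0, 1$, since $H_1(\mathcal{V}) \cong \Z \oplus \Z \oplus \Z_2$ has torsion and the corresponding $\mathrm{Ext}$ contribution would obstruct the analogous statement in degree two.
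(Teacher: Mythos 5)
Your argument is correct, but it takes a genuinely different route from the paper's. The paper proves this lemma by ``a straightforward calculation'': one computes the cohomology of $\mathrm{Hom}(\mathcal{V},G)$ and of $\mathrm{Hom}(\mathcal{F},G)$ directly from the explicit boundary formulae \eqref{eq:boundary} together with $\partial_2(A)=2\,\tilde{\mathrm{I}}^2_{\mathrm{o}}$, and checks that $\lambda^*$ carries generators to generators. You replace that computation by the general principle that the Kronecker evaluation $e^\kappa_C\colon H^\kappa(\mathrm{Hom}(C,G))\to\mathrm{Hom}(H_\kappa(C),G)$ is an isomorphism in degrees $0$ and $1$ whenever $C_0$ and $H_0(C)$ are free, combined with naturality of $e^\kappa$ along the chain map $\lambda$ and the defining property of a free approximation that $\lambda_*$ is an isomorphism on homology. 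The hypotheses do hold here ($C_0(\mathcal{V})\cong\Z^2$ by Table~\ref{table1}, $H_0(\mathcal{V})\cong\Z$ by the preceding lemma, and likewise for $\mathcal{F}$), and your splitting arguments are sound even though $C_1(\mathcal{V})$ contains $\Z_2$ summands, since only the freeness of $B_0$ (a subgroup of the free group $C_0$) and of $H_0$ is ever used. What your approach buys is conceptual clarity and an explanation of the restriction to $\kappa=0,1$ --- the torsion summand $\Z_2$ in $H_1(\mathcal{V})$ would contribute a nonzero $\mathrm{Ext}$ term in degree $2$ --- which the paper leaves implicit; what the paper's direct computation buys is the explicit identification of generators on both sides, which is what gets used elsewhere in Sections \ref{section4} and \ref{section5}.
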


\begin{rmk}
As the above lemma shows, for $n = 3$ the homomorphisms
$\lambda^*$ are isomorphisms. However, for
$n > 3$, we do not know if this is true or not.
\end{rmk}

Let $f \co M \to N$ be a proper $C^0$ stable
$\tau$--map of an $n$--dimensional manifold
into an $(n-1)$--dimensional manifold
which is a Thom map.
Then by the same argument as in \cite[\S5]{Kazarian},
we can define a natural homomorphism
$$\tilde{\varphi}_f^* \co \mathbb{H}^*(\mathcal{V}(n, n-1); G)
\to H^*(N; G)$$
in such a way that for $G = \Z$ and $\Z_2$, we have
$$\varphi_f^* = \tilde{\varphi}_f^* \circ \lambda^*
\co H^*(\mathcal{V}(n, n-1); G) \to H^*(N; G),$$
where $\varphi^*_f$ refers to the
homomorphism induced by $f$
in the sense of \cite[Chapter~11]{Saeki04}
and 
$$\mathcal{V}(n, n-1) = \mathcal{C}_*
(\tau^0(n, n-1), \varrho^0_{n, n-1}(2)).$$
(We have added ``$\ast$'' as superscript
for $\varphi^*_f$,
which will be necessary in the following
argument.) Note also that we can define the
natural homomorphism
$$\varphi_{f\ast} = \tilde{\varphi}_{f\ast} \co H_*(N; G) \to 
\mathbb{H}_*(\mathcal{V}(n, n-1); G)
= H_*(\mathcal{V}(n, n-1); G)$$
for any abelian group $G$. (In fact,
$\varphi_f$ and $\tilde{\varphi}_f$ are
defined on the chain level.)

Let us show that the homomorphism 
$\tilde{\varphi}^*_f$ induced by $f$ defines
a $\tau$--cobordism invariant
of $f$.

By virtue of the uniqueness of the
lift (up to chain homotopy)
for free approximations
in the sense of \cite[Chapter 2, Section 2, Lemma~13]{Spanier}
and \cite[Proposition~2.2]{Kazarian},
we can define the suspension homomorphism
for free approximations of the universal
homology complexes of singular fibers.
More precisely, we have a natural
chain map
$$s \co \mathcal{C}_*(\tau^0(n, n-1), \varrho^0_{n, n-1}(2))
\to \mathcal{C}_*(\tau^0(n+1, n), \varrho^0_{n+1, n}(2))$$
induced by suspension.
(Recall that the suspension
of a map $f \co M \to N$ refers to the map
$f \times \mathrm{id}_\R \co M \times \R \to N \times \R$
and this naturally induces the notion of a suspension
for singular fibers. For details, see
\cite[Definition~8.4]{Saeki04}.)
Then we have a chain map
$\tilde{s} \co \mathcal{F}(n, n-1) \to \mathcal{F}(n+1, n)$,
unique up to chain homotopy,
which makes the following diagram
commutative:
$$
\begin{CD}
\mathcal{F}(n, n-1) @> \tilde{s} >> \mathcal{F}(n+1, n) \\
@V \lambda_n VV  @VV \lambda_{n+1} V \\
\mathcal{C}_*(\tau^0(n, n-1), \varrho^0_{n, n-1}(2))
@> s >> \mathcal{C}_*(\tau^0(n+1, n), \varrho^0_{n+1, n}(2)),
\end{CD}
$$
where $\mathcal{F}(m, m-1)$ denotes
a free approximation of
$\mathcal{C}_*(\tau^0(m, m-1), \varrho^0_{m, m-1}(2))$
and $\lambda_m$ is the corresponding epimorphism
for $m = n, n+1$. Then $\tilde{s}$ induces
the homomorphism
$$\tilde{s}_\kappa^* \co \mathbb{H}^\kappa(\mathcal{V}(n+1, n); G)
\to \mathbb{H}^\kappa(\mathcal{V}(n, n-1); G)$$
for any coefficient abelian group $G$.

Then we have the following.

\begin{prop}\label{prop:inv}
Let $f_i \co M_i \to N$, $i = 0, 1$,
be $C^0$ stable $\tau$--maps of $n$--dimensional
manifolds into an $(n-1)$--dimensional
manifold $N$ which is a Thom map, where we assume that
$M_i$ are closed. If they are $\tau$--cobordant,
then for every $\kappa$, we have
$$\tilde{\varphi}^*_{f_0} \circ
\tilde{s}_{\kappa}^* =
\tilde{\varphi}^*_{f_1} \circ
\tilde{s}_{\kappa}^* \co
\mathbb{H}^\kappa(\mathcal{V}(n+1, n); G)
\to H^\kappa(N; G)$$
for any coefficient abelian group $G$.
In other words, we have
$$\tilde{\varphi}^*_{f_0}|_{\mathrm{Im}\, 
\tilde{s}_{\kappa}^*} = 
\tilde{\varphi}^*_{f_1}|_{\mathrm{Im}\, 
\tilde{s}_{\kappa}^*} \co \mathrm{Im}\, 
\tilde{s}_{\kappa}^* \to H^\kappa(N; G).$$
\end{prop}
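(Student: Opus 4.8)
The plan is to prove the sharper \emph{naturality} statement that the construction $\tilde{\varphi}^*$ applied to the cobordism map itself interpolates between $\tilde{\varphi}^*_{f_0}$ and $\tilde{\varphi}^*_{f_1}$, and then to read off the equality from homotopy invariance of singular cohomology. First I would fix a $\tau$--cobordism $F \co X \to N \times [0, 1]$ between $f_0$ and $f_1$ as in \fullref{dfn:cob}, so that $X$ is compact, $\partial X = M_0 \amalg (-M_1)$, and $F = f_i \times \id$ on a collar of each boundary component. After a perturbation supported in the interior of $X$ — which leaves $F$ untouched on the collars, where it already agrees with the stable maps $f_i \times \id$ — I may assume that $F$ is a proper $C^0$ stable Thom $\tau$--map of the $(n+1)$--manifold $X$ into the $n$--manifold $N \times [0, 1]$. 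Since $F$ is then an $(n+1, n)$--map, the construction recalled above (following \cite[\S5]{Kazarian}) applies verbatim to $F$ and yields a homomorphism
$$
\tilde{\varphi}^*_F \co \mathbb{H}^\kappa(\mathcal{V}(n+1, n); G) \to H^\kappa(N \times [0, 1]; G)
$$
for every $\kappa$ and every abelian group $G$.

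The heart of the argument is the boundary identity
$$
j_i^* \circ \tilde{\varphi}^*_F = \tilde{\varphi}^*_{f_i} \circ \tilde{s}^*_\kappa \co \mathbb{H}^\kappa(\mathcal{V}(n+1, n); G) \to H^\kappa(N; G), \qquad i = 0, 1,
$$
where $j_i \co N \to N \times [0, 1]$, $j_i(y) = (y, i)$, is the inclusion onto $N \times \{i\}$. I would establish it by unwinding the definitions on a collar of $N \times \{i\}$. There $F$ coincides with the suspension $f_i \times \id$, so for each equivalence class $\mathfrak{F}$ of singular fibers of $(n+1, n)$--maps the locus $\mathfrak{F}(F) \subset N \times [0, 1]$ meets the collar exactly in $\bigl(\bigcup_{\mathfrak{G}} \mathfrak{G}(f_i)\bigr) \times [0, \varepsilon)$, the union being over those $(n, n-1)$--types $\mathfrak{G}$ whose suspension is $\mathfrak{F}$ — which is precisely the correspondence encoded by the suspension chain map $s$. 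Thus pulling back the cochain associated with $\mathfrak{F}$ and $F$ along $j_i$ computes the cochain associated with $f_i$ and the suspended class, i.e.\ $\tilde{\varphi}^*_{f_i} \circ \tilde{s}^*_\kappa$ at the chain level. Passing to the free approximations, the identity on hypercohomology follows from the commutative square defining the lift $\tilde{s}$ together with the uniqueness up to chain homotopy of lifts to free approximations (\cite[Chapter~2, Section~2, Lemma~13]{Spanier}, \cite[Proposition~2.2]{Kazarian}), since chain--homotopic maps induce identical homomorphisms on (hyper)cohomology.

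Granting this identity, the conclusion is immediate. The inclusions $j_0, j_1 \co N \to N \times [0, 1]$ are homotopic via $(y, s) \mapsto (y, s)$, hence induce the same map $j_0^* = j_1^*$ on $H^\kappa(-; G)$, and therefore
$$
\tilde{\varphi}^*_{f_0} \circ \tilde{s}^*_\kappa = j_0^* \circ \tilde{\varphi}^*_F = j_1^* \circ \tilde{\varphi}^*_F = \tilde{\varphi}^*_{f_1} \circ \tilde{s}^*_\kappa,
$$
which is the first assertion; restricting to $\mathrm{Im}\, \tilde{s}^*_\kappa$ gives the second.

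The main obstacle is the boundary identity $j_i^* \circ \tilde{\varphi}^*_F = \tilde{\varphi}^*_{f_i} \circ \tilde{s}^*_\kappa$. The geometric input — that $F$ is the suspension of $f_i$ near $N \times \{i\}$, so that the singular--fiber loci of $F$ restrict to suspended loci of $f_i$ with matching co--orientations — is transparent, but one must check that this compatibility is preserved under the passage to free approximations, where $\tilde{\varphi}_F$, $\tilde{\varphi}_{f_i}$ and $\tilde{s}$ are defined only up to chain homotopy; it is exactly the uniqueness statements for free approximations that make this rigorous. A subsidiary technical point, to be dispatched at the outset, is the reduction making $F$ a proper $C^0$ stable Thom $\tau$--map without disturbing the collars on which $F = f_i \times \id$.
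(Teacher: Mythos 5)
Your proposal is correct and follows essentially the same route as the paper: the chain-level identity $s \circ \varphi_{f_i} = \varphi_F \circ i_{i\sharp}$ coming from the suspension structure of $F$ on the collars, promoted to the free approximations via the uniqueness of lifts up to chain homotopy, and then concluded by the homotopy $i_0 \simeq i_1$. The only cosmetic difference is that you make explicit the preliminary perturbation of $F$ to a proper $C^0$ stable Thom $\tau$--map, which the paper leaves implicit.
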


\begin{proof}
Let $F \co W \to N \times [0, 1]$ be a $\tau$--cobordism
between $f_0$ and $f_1$. Let us fix cell complex
structures on $N$ and $N \times [0, 1]$ which
are compatible with each other.
We denote by $C(N)$ and $C(N \times [0, 1])$
the chain complexes (over the
integers) of $N$ and $N
\times [0, 1]$ respectively associated with their
cell complex structures.
Then as in \cite{Kazarian} we can
construct chain maps
\begin{equation*}
\begin{split}
& \varphi_{f_j} \co C(N) \to \mathcal{V}(n, n-1), \quad j = 0, 1, 
\quad \text{ and } \\
& \varphi_F \co C(N \times [0, 1]) \to \mathcal{V}(n+1, n).
\end{split}
\end{equation*}
(For this, we do not need any free
approximation.)
Note that then there exist
chain maps
\begin{equation*}
\begin{split}
& \tilde{\varphi}_{f_j} \co C(N) \to \mathcal{F}(n, n-1), \quad j = 0, 1, 
\quad \text{ and } \\
& \tilde{\varphi}_F \co C(N \times [0, 1]) \to \mathcal{F}(n+1, n),
\end{split}
\end{equation*}
unique up to chain homotopy,
such that $\varphi_{f_j} = \lambda_n \circ
\tilde{\varphi}_{f_j}$, $j = 0, 1$, and
$\varphi_F = \lambda_{n+1} \circ \tilde{\varphi}_F$.

Now, let us consider the diagram
of chain complexes as in \fullref{diagram},
where $i_{j\sharp}$ is the chain map induced by
$i_j \co N \to N \times [0, 1]$
defined by $i_j(x) = (x, j)$, $j = 0, 1$.

\begin{figure}[ht!]
\begin{center}
\setlength{\unitlength}{.75mm}
\begin{picture}(100,50)(0,0)
\put(-3,45){\small$\mathcal{F}(n, n-1)$}
\put(95,45){\small$\mathcal{F}(n+1, n)$}
\put(-3,0){\small$\mathcal{V}(n, n-1)$}
\put(95,0){\small$\mathcal{V}(n+1, n)$}
\put(20,46){\vector(1,0){73}}
\put(50,47){$\scriptstyle\tilde{s}$}
\put(10,40){\vector(0,-1){35}}
\put(5,22){$\scriptstyle\lambda_n$}
\put(100,40){\vector(0,-1){35}}
\put(102,22){$\scriptstyle\lambda_{n+1}$}
\put(20,1){\vector(1,0){73}}
\put(50,2){$\scriptstyle s$}
\put(24,22){\small$C(N)$}
\put(37,23){\vector(1,0){27}}
\put(50,25){$\scriptstyle i_{j \sharp}$}
\put(67,22){\small$C(N \times [0, 1])$}
\put(24,30){\vector(-1,1){10}}
\put(19,36){$\scriptstyle \tilde{\varphi}_{f_j}$}
\put(82,30){\vector(1,1){10}}
\put(83,36){$\scriptstyle \tilde{\varphi}_F$}
\put(24,17){\vector(-1,-1){10}}
\put(14,14){$\scriptstyle \varphi_{f_j}$}
\put(82,17){\vector(1,-1){10}}
\put(86,14){$\scriptstyle \varphi_F$}
\end{picture}
\end{center}
\caption{Diagram of chain complexes}
\label{diagram}
\end{figure}
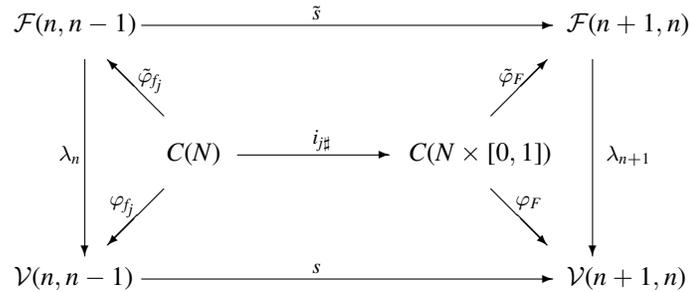

Note that we have
\begin{equation*}
\begin{split}
s \circ \lambda_n & = \lambda_{n+1} \circ \tilde{s}
\quad \text{ and}
\\
s \circ \varphi_{f_j} & = \varphi_F \circ i_{j\sharp},
\quad j = 0, 1,
\end{split}
\end{equation*}
where the latter equality follows from
the constructions of $\varphi_{f_j}$,
$\varphi_F$ and $s$. Therefore, we see that
$$\lambda_{n+1} \circ (\tilde{s} \circ \tilde{\varphi}_{f_j})
= \lambda_{n+1} \circ \tilde{\varphi}_F \circ i_{j\sharp}$$
holds for $j = 0, 1$.
Then by the uniqueness of lifts up to chain homotopy
(see \cite[Chapter 2, Section 2, Lemma~13]{Spanier}
and \cite[Proposition~2.2]{Kazarian}),
we see that the chain maps
$\tilde{s} \circ \tilde{\varphi}_{f_j}$ and
$\tilde{\varphi}_F \circ i_{j\sharp}$ are chain
homotopic. 

Hence, they induce the identical homomorphisms
in homology and cohomology. In particular, we have
the commutative diagram
$$
\begin{CD}
H^*(\mathcal{F}(n,n-1); G)
@< \tilde{s}^\ast
<< H^*(\mathcal{F}(n+1,n); G)
\\
@V \tilde{\varphi}_{f_j}^* VV @VV 
\tilde{\varphi}_F^* V \\
H^*(N; G) @< i_{j*} <<
H^*(N \times [0, 1]; G)
\end{CD}
$$
for any abelian group $G$.
Since $i_0$ and $i_1$
are homotopic, we have $i_0^* = i_1^*$, and the
result follows immediately.
\end{proof}

Thus, it is expected that
the hypercohomology of the
universal homology complex of singular fibers
gives more cobordism invariants
than the usual cohomology of the
universal cochain complex. However, in our situation,
by virtue of \fullref{lem:isom},
we get the same cobordism invariants.

It would be interesting to
study the hypercohomologies of the
higher dimensional analogues
to see if there is an essential difference
between the hypercohomologies and the
usual cohomologies.
If there is, then the corresponding
cobordism invariant would lead to
a ``hidden singular fiber''
in a sense similar to \cite{Kazarian}.

\begin{rmk}
In this section, we have considered only
$\tau$--maps (i.e.\ fold maps). However, this is not
essential, and the same theory
as in this section holds in the
general framework as in \cite{Saeki04}.
\end{rmk}

\section{A topological invariant
for map germs}\label{section6}

In this section, as an application
of the theory of universal
complexes of singular fibers as described
in Sections \ref{section4} and \ref{section5},
we give a new topological invariant
for generic smooth map germs
$(\R^3, 0) \to (\R^2, 0)$.

In what follows, we will not
distinguish a map germ from
its representative when there is no
confusion.

\begin{dfn}\label{dfn:generic}
We say that a smooth
map germ $g \co (\R^3, 0) \to (\R^2, 0)$
is \emph{generic} if
for any sufficiently small positive 
real numbers $\varepsilon$ and $\delta$,
the upper bound of $\delta$ depending on $g$ 
and the upper bound of $\varepsilon$
depending on $\delta$ and $g$, we have
\begin{itemize}
\item[(G1)] $D^3_\delta \cap g^{-1}(S^1_\varepsilon)$
is a smooth manifold possibly with boundary,
\item[(G2)] $g_\partial = g|_{D^3_\delta \cap g^{-1}(S^1_\varepsilon)} \co
D^3_\delta \cap g^{-1}(S^1_\varepsilon) \to
S^1_\varepsilon$
is $C^\infty$ stable,
\item[(G3)] $g|_{\partial D^3_\delta
\cap g^{-1}(D^2_\varepsilon)} \co
\partial D^3_\delta
\cap g^{-1}(D^2_\varepsilon) \to D^2_\varepsilon$
is a submersion, and
\item[(G4)] the restriction
$$g|_{D^3_\delta \cap g^{-1}(D^2_\varepsilon \setminus
\{0\})} \co D^3_\delta \cap g^{-1}(D^2_\varepsilon \setminus
\{0\})
\to D^2_\varepsilon \setminus \{0\}$$ 
is proper,
$C^\infty$ stable and $C^\infty$ equivalent
to the product map 
$$g_\partial \times \id_{(0,
\varepsilon)} \co (D^3_\delta \cap g^{-1}(S^1_\varepsilon))
\times (0, \varepsilon) \to S^1_\varepsilon \times (0, 
\varepsilon)$$ 
defined by $(x, t) \mapsto
(g(x), t)$,
\end{itemize}
where $D^3_\delta$ (or $D^2_\varepsilon$)
denotes the
$3$--dimensional ball in $\R^3$ (resp.\ 
$2$--dimensional disk in $\R^2$) with radius
$\delta$ (resp.\ $\varepsilon$)
centered at the origin.

Note that the set of non-generic
map germs has infinite codimension in
an appropriate sense. For details, see 
the results of Fukuda \cite{Fukuda1}
or Nishimura \cite{Nishimura}.
\end{dfn}

For a generic smooth map germ $g \co (\R^3, 0)
\to (\R^2, 0)$, set $U = D^3_\delta \cap
g^{-1}(\Int{D^2_\varepsilon})$ for
$\delta$ and $\varepsilon$ as above.
Note that $g|_U \co U \to \Int{D^2}$ is a
proper smooth map.
Let $\tilde{g} \co U \to \Int{D^2} \subset \R^2$
be a proper $C^\infty$ stable perturbation of $g|_U$
in a sense similar to that
in \cite{FI,FBS,BS,Ohsumi}.

For a cusp point $x \in U$ of $\tilde{g}$,
we define its sign $\mathrm{sign}(x) \in \{+1, -1\}$ 
as follows.
Let $J_0(x)$ (resp.\ $J_1(x)$)
be a short arc consisting
of the definite fold points (resp.\ 
indefinite fold points) of $\tilde{g}$
near $x$. Then the arcs $\tilde{g}(J_0(x))$
and $\tilde{g}(J_1(x))$ are situated 
in $\R^2$ near $\tilde{g}(x)$ as depicted
in \fullref{fig6-1}, and we define the sign
as in the figure.

\begin{figure}[ht!]\small
\begin{center}
\labellist\hair 5pt
\pinlabel {$\tilde{g}(x)$} [b] at 155 705
\pinlabel {$\tilde{g}(x)$} [b] at 512 705
\pinlabel {$\tilde{g}(J_0(x))$} [t] <-10pt,0pt> at 079 566
\pinlabel {$\tilde{g}(J_0(x))$} [t] <10pt,0pt> at 588 566
\pinlabel {$\tilde{g}(J_1(x))$} [t] at 232 566
\pinlabel {$\tilde{g}(J_1(x))$} [t] at 434 566
\pinlabel {$\mathrm{sign}(x) = +1$} [t] <0pt,-25pt> at 155 566
\pinlabel {$\mathrm{sign}(x) = -1$} [t] <0pt,-25pt> at 516 566 
\endlabellist
\includegraphics[width=0.5\linewidth]{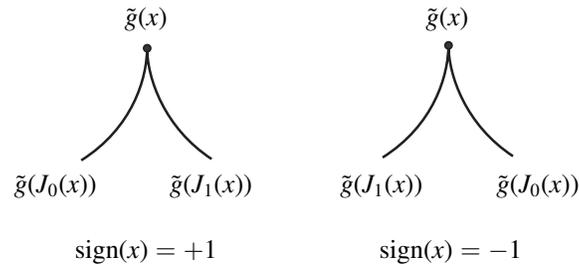}
\end{center}
\caption{Sign for a cusp point $x$}
\label{fig6-1}
\end{figure}

Note that by virtue of condition (G4) above,
$\tilde{g}$ has finitely many cusp points.
The total number of cusp points of
$\tilde{g}$, counted with signs, is called
the \emph{algebraic number of cusps} of $\tilde{g}$.
\eject

\begin{dfn}\label{dfn:topA}
Let $g$ and $g' \co (\R^3, 0) \to (\R^2, 0)$
be smooth map germs. We say that they
are \emph{topologically $\mathcal{A}$--equivalent}
if there exist homeomorphism germs
$\Phi \co (\R^3, 0) \to (\R^3, 0)$ and
$\varphi \co (\R^2, 0) \to (\R^2, 0)$
such that $g' = \varphi^{-1} \circ g \circ \Phi$.
Furthermore, if the homeomorphism germ
$\varphi$ can be chosen so that it preserves the
orientation of $\R^2$, then we say that $g$
and $g'$ are
\emph{topologically $\mathcal{A}_+$--equivalent}.
\end{dfn}

The main result of this section is the
following.

\begin{thm}\label{thm:new}
Let $g \co (\R^3, 0) \to (\R^2, 0)$ be
a generic smooth map germ. Then
the algebraic number of
cusps of a $C^\infty$ stable
perturbation $\tilde{g}$ of a representative of $g$
is an invariant of the topological
$\mathcal{A}_+$--equivalence class of $g$.
In particular, the absolute value of the
algebraic number of cusps of $\tilde{g}$
is an invariant of the topological
$\mathcal{A}$--equivalence class of $g$.
\end{thm}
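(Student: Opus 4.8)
The plan is to attach to $g$ a canonical $C^\infty$ stable map of a compact surface with boundary into $S^1$, to identify the algebraic number of cusps of $\tilde g$ with a single $\Z$--valued cobordism invariant of that map constructed from the universal complex of singular fibers, and then to use the fact that this invariant, being built from $C^0$ equivalence classes of singular fibers, is a \emph{topological} rather than merely a $C^\infty$ invariant.

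First I would fix $\delta$ and $\varepsilon$ as in \fullref{dfn:generic} and consider the associated map $g_\partial \co S \to S^1_\varepsilon$ with $S = D^3_\delta \cap g^{-1}(S^1_\varepsilon)$, a compact surface with boundary. By (G2) it is $C^\infty$ stable and by (G3) its restriction to $\partial S$ is a submersion, so $g_\partial$ is a circle--valued Morse function whose critical points are exactly the fold points of $g$ lying over $S^1_\varepsilon$, the definite (resp.\ indefinite) folds giving the critical points of index $0,2$ (resp.\ $1$). By (G4) one may arrange $\tilde g$ to coincide with $g$ over the annulus $D^2_\varepsilon \setminus \Int D^2_{r_0}$ for some small $r_0$, so that all cusp points of $\tilde g$ lie over $\Int D^2_{r_0}$ and the map of $\tilde g$ over $S^1_{r_0}$ is $C^\infty$ equivalent to $g_\partial$.

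The core of the argument is a Stokes--type identity in the universal complex of co-orientable singular fibers of stable maps of $3$--manifolds into surfaces, now retaining the cusp fiber (so this is the full complex, not the $\tau$--complex of \fullref{section4}). By \fullref{lem:coori} the cusp fiber $\tilde{\mathrm{II}}^a$ is strongly co-orientable, and inspection of the adjacent degenerations shows that $\tilde{\mathrm{II}}^a$ occurs in the coboundary of the codimension--one fold classes $\tilde{\mathrm{I}}^0,\tilde{\mathrm{I}}^1$. Hence, in the cochain realisation of \cite[Chapter~11]{Saeki04} applied to $\tilde g|_{\tilde g^{-1}(D^2_{r_0})}$ over the disk $D^2_{r_0}$, the codimension--two cusp cochain is the coboundary of a suitable fold cochain $\theta$, and its evaluation against the fundamental class of $(D^2_{r_0},S^1_{r_0})$ reduces to the evaluation of $\theta$ along $S^1_{r_0}$. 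With the co-orientations fixed by the orientation of the target $\R^2$, this boundary term is the signed count of fold arcs of $\tilde g$ crossing $S^1_{r_0}$, read off from $g_\partial$; by the computation behind \fullref{thm:main1}(1) it is precisely the $\Z$--valued invariant $\max(g_\partial)-\min(g_\partial)$, the analogue for $g_\partial$ of the invariant $s^{0\ast}_1\alpha_2$ of \fullref{section4}, which realises $\mathcal{M}^{SO}(2)\cong\Z$. In particular the algebraic number of cusps equals this invariant of $g_\partial$ and is independent of the chosen perturbation $\tilde g$.

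It remains to deduce topological invariance. Since every class in the universal complex is a $C^0$ equivalence class of singular fibers, the invariant $\max(g_\partial)-\min(g_\partial)$ is unchanged under $C^0$ conjugacy of $g_\partial$, not only under $C^\infty$ conjugacy. A topological $\mathcal{A}_+$--equivalence $(\Phi,\varphi)$ of $g$ restricts to homeomorphisms of the links carrying $g_\partial$ to $g'_\partial$ and, as $\varphi$ preserves the orientation of $\R^2$, preserving the orientation of $S^1_\varepsilon$; hence the invariant and therefore the algebraic number of cusps are preserved, proving the $\mathcal{A}_+$--statement. If instead $\varphi$ reverses the orientation of $\R^2$, it reverses the co-orientation of $\tilde{\mathrm{II}}^a$ and flips every $\mathrm{sign}(x)$ of \fullref{fig6-1}, negating the algebraic number of cusps, so its absolute value is a topological $\mathcal{A}$--invariant. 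The main obstacle is the Stokes--type identity above: one must pin down which fold cochain $\theta$ has the cusp cochain as coboundary (with coefficient $\pm 1$), use the closed--support cohomology of $(D^2_{r_0},S^1_{r_0})$ forced by the codimension--zero and boundary contributions, and match the resulting co-orientation signs with the convention of \fullref{fig6-1}, so that the boundary term is literally the algebraic cusp number and not merely a fixed multiple of it.
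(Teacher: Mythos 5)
Your overall strategy is the same as the paper's: identify the algebraic number of cusps of $\tilde{g}$ with a signed count of boundary fibers of $g_\partial$ via the coboundary relation $\delta_1(-\tilde{\mathrm{I}}^0_{\mathrm{o}}+\tilde{\mathrm{I}}^0_{\mathrm{e}})=\tilde{\mathrm{II}}^a_{\mathrm{o}}+\tilde{\mathrm{II}}^a_{\mathrm{e}}$ (your ``Stokes--type identity'' is exactly \fullref{prop:cusp}, obtained in the paper by taking boundaries of the closures of the fold loci as $1$--chains in $D^2$), and then invoke the $C^0$ nature of the singular--fiber theory. But there are two genuine gaps in the second half.

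First, the transfer of the invariant from $g_\partial$ to $g'_\partial$ is not a conjugacy. A topological $\mathcal{A}_+$--equivalence $g'=\varphi^{-1}\circ g\circ\Phi$ does \emph{not} ``restrict to homeomorphisms of the links carrying $g_\partial$ to $g'_\partial$'': $\Phi$ carries $g'^{-1}(S^1_{\varepsilon'})$ to $g^{-1}(\varphi(S^1_{\varepsilon'}))$, the preimage of a merely topological circle, not the link $g^{-1}(S^1_\varepsilon)$ at any radius. So what you obtain directly is a $C^0$ conjugacy between $g'_\partial$ and $g$ restricted over $\varphi(S^1_{\varepsilon'})$; to compare the latter with $g_\partial$ over $S^1_\varepsilon$ you still need an argument. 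The paper supplies it by exhibiting $g$ over the annulus $Y=D^2_\varepsilon\setminus\varphi(\Int{D^2_{\varepsilon'}})$ as a \emph{topological} fold cobordism between the two boundary maps and appealing to the cobordism invariance of the class $-[\tilde{\mathrm{I}}^0_{\mathrm{o}}]+[\tilde{\mathrm{I}}^0_{\mathrm{e}}]$ established in \fullref{section4}, extended to topological Thom maps as in \fullref{rmk:top}. Your write-up has all the ingredients for this (you already trust cobordism invariance of the singular--fiber classes) but asserts a false restriction statement in its place.

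Second, for a general generic germ the origin need not be isolated in $g^{-1}(0)$, so $S=D^3_\delta\cap g^{-1}(S^1_\varepsilon)$ is a surface with nonempty boundary and the fibers of $g_\partial$ and of the annulus cobordism contain interval components. The list of co-orientable equivalence classes and the coboundary formulas of \fullref{section4} (and \fullref{fig43}) no longer apply verbatim: one must redo the classification with the extra types $\tilde{\mathrm{I}}^\alpha$, $\tilde{\mathrm{II}}^{0\alpha}$, $\tilde{\mathrm{II}}^{1\alpha}$, $\tilde{\mathrm{II}}^\beta$, $\tilde{\mathrm{II}}^\gamma$ (the paper's \fullref{fig7}), note that $\tilde{\mathrm{II}}^\gamma$ also contributes cusps, and use the corners version of fold cobordism (\fullref{dfn:cob-b}). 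In particular the invariant is $-||\tilde{\mathrm{I}}^0_{\mathrm{o}}(g_\partial)||+||\tilde{\mathrm{I}}^0_{\mathrm{e}}(g_\partial)||$ computed with the parity of \emph{all} (circle and interval) components, which is not literally the closed-surface quantity $\max(g_\partial)-\min(g_\partial)$ from \fullref{thm:main1}~(1) that you quote. Your proposal as written only covers the isolated case, i.e.\ \fullref{thm:inv} but not \fullref{thm:inv2}.
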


In order to prove the above theorem, let
us first consider the
following situation.
Let $F \co W \to D^2$ be a smooth
map of a compact $3$--dimensional manifold
$W$ with nonempty boundary $\partial W = M$
with the following properties:
\begin{itemize}
\item[(1)] $F^{-1}(\partial D^2) = M$,
\item[(2)] $f = F|_M \co M \to \partial D^2 = S^1$ is $C^\infty$
stable,
\item[(3)] $F|_{M \times [0, 1)}
= f \times \mathrm{id}_{[0, 1)}$,
where we identify the small open collar neighborhood
of $M$ (or $\partial D^2$) in $W$ (resp.\ in
$D^2$) with $M \times [0, 1)$ (resp.\ 
$\partial D^2 \times [0, 1)$), and
\item[(4)] $F|_{\Int{W}} \co \Int{W}
\to \Int{D^2}$ is a proper $C^\infty$ stable
map.
\end{itemize}
Note that $F$ may have cusp
singular points. Thus, in general,
$F$ has singular fibers
as depicted in \fullref{fig43}.

In \fullref{section4}, we have seen that
the fibers $\tilde{\mathbf{0}}_*$,
$\tilde{\mathrm{I}}^0_*$,
$\tilde{\mathrm{I}}^1_*$ and
$\tilde{\mathrm{II}}^{01}_*$
are co-orientable. If cusp singular
points are allowed, then we see easily that
$\tilde{\mathrm{II}}^a_*$ is also co-orientable.
We give co-orientations to
$\tilde{\mathrm{II}}^a_*$ 
as depicted in \fullref{fig5}.

\begin{figure}[ht!]\small
\begin{center}
\labellist
\pinlabel {$\tilde{\mathrm{II}}^a_{\mathrm{o}}$} [b] <2pt,-2pt> at 147 626
\pinlabel {$\tilde{\mathrm{II}}^a_{\mathrm{e}}$} [b] <2pt,-2pt> at 438 626
\pinlabel {$\tilde{\mathrm{I}}^0_{\mathrm{o}}$} [t] at 361 467
\pinlabel {$\tilde{\mathrm{I}}^1_{\mathrm{o}}$} [t] <2pt,0pt> at 224 467
\pinlabel {$\tilde{\mathrm{I}}^0_{\mathrm{e}}$} [t] at 071 467
\pinlabel {$\tilde{\mathrm{I}}^1_{\mathrm{e}}$} [t] <2pt,0pt> at 514 467
\endlabellist
\includegraphics[width=0.5\linewidth]{\figdir/fig5}\vspace{2mm}
\end{center}
\caption{Co-orientations for 
$\tilde{\mathrm{II}}^a_{\mathrm{o}}$ and
$\tilde{\mathrm{II}}^a_{\mathrm{e}}$}
\label{fig5}
\end{figure}

In the following, we orient
$D^2$ and $S^1 = \partial D^2$
consistently so that $S^1$ gets the
counterclockwise orientation.
Then we have the following.

\begin{lem}
For the algebraic numbers
of singular fibers of $F$ and $f$,
we have the following:
\begin{equation*}
\begin{split}
||\tilde{\mathrm{I}}^0_{\mathrm{o}}(f)|| &
= -||\tilde{\mathrm{II}}^{01}_{\mathrm{o}}(F)||
+ ||\tilde{\mathrm{II}}^{01}_{\mathrm{e}}(F)||
- ||\tilde{\mathrm{II}}^a_{\mathrm{e}}(F)||, \\
||\tilde{\mathrm{I}}^0_{\mathrm{e}}(f)|| &
= -||\tilde{\mathrm{II}}^{01}_{\mathrm{o}}(F)||
+ ||\tilde{\mathrm{II}}^{01}_{\mathrm{e}}(F)||
+ ||\tilde{\mathrm{II}}^a_{\mathrm{o}}(F)||, \\
||\tilde{\mathrm{I}}^1_{\mathrm{o}}(f)|| &
= ||\tilde{\mathrm{II}}^{01}_{\mathrm{o}}(F)||
- ||\tilde{\mathrm{II}}^{01}_{\mathrm{e}}(F)||
- ||\tilde{\mathrm{II}}^a_{\mathrm{o}}(F)||, \\
||\tilde{\mathrm{I}}^1_{\mathrm{e}}(f)|| &
= ||\tilde{\mathrm{II}}^{01}_{\mathrm{o}}(F)||
- ||\tilde{\mathrm{II}}^{01}_{\mathrm{e}}(F)||
+ ||\tilde{\mathrm{II}}^a_{\mathrm{e}}(F)||.
\end{split}
\end{equation*}
\end{lem}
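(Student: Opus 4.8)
The plan is to read each equation as a Stokes-type conservation law for a co-oriented $1$--submanifold of $D^2$. First I would note that, by \fullref{lem:coori} together with the remark preceding the lemma (where $\tilde{\mathrm{II}}^a_\ast$ is seen to be co-orientable, with co-orientations fixed as in \fullref{fig5}), each of the four sets $\tilde{\mathrm{I}}^0_{\mathrm{o}}(F)$, $\tilde{\mathrm{I}}^0_{\mathrm{e}}(F)$, $\tilde{\mathrm{I}}^1_{\mathrm{o}}(F)$ and $\tilde{\mathrm{I}}^1_{\mathrm{e}}(F)$ is a co-oriented $1$--dimensional submanifold of $\Int{D^2}$. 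Its closure $V$ in $D^2$ is a compact $1$--manifold with boundary, and $\partial V$ splits into points lying on $S^1 = \partial D^2$ and points lying in the codimension-two strata of $F$. The only codimension-two strata that can occur as such endpoints are the $\tilde{\mathrm{II}}^{01}_\ast$-- and the cusp $\tilde{\mathrm{II}}^a_\ast$--fibers: the incidence coefficients of all the other codimension-two fibers with the co-orientable codimension-one fibers $\tilde{\mathrm{I}}^0_\ast,\tilde{\mathrm{I}}^1_\ast$ vanish, as one reads off from \eqref{eq:boundary} (note in particular that $\tilde{\mathrm{II}}^{00}_\ast,\tilde{\mathrm{II}}^{11}_\ast,\dots$ map to $0$, while $\tilde{\mathrm{II}}^{02}_\ast,\tilde{\mathrm{II}}^{12}_\ast,\dots$ are incident only to the non-co-orientable fiber $\tilde{\mathrm{I}}^2_\ast$).

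Next I would identify the two kinds of boundary points. By condition (3) the map $F$ equals the product $f \times \id$ near $S^1$, so the fibers of $F$ over points close to $S^1$ are suspensions of the fibers of $f$; hence the boundary points of $V$ on $S^1$ are exactly the points of the corresponding type $\tilde{\mathrm{I}}^j_\ast(f)$, and their algebraic count, taken with respect to the fixed co-orientation and the counterclockwise orientation of $S^1$, is $||\tilde{\mathrm{I}}^j_\ast(f)||$. The interior endpoints are governed by the incidence coefficients $[\tilde{\mathrm{I}}^j_\ast : \tilde{\mathrm{II}}^{01}_\ast]$ and $[\tilde{\mathrm{I}}^j_\ast : \tilde{\mathrm{II}}^a_\ast]$: the former are supplied by the boundary homomorphism \eqref{eq:boundary}, while the latter are computed from the co-orientations of \fullref{fig5} by inspecting the local degeneration of the cusp fiber, using the fact that near a cusp point the fold image consists of one definite branch $\tilde{\mathrm{I}}^0_\ast$ and one indefinite branch $\tilde{\mathrm{I}}^1_\ast$ meeting at the cusp.

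Then I would invoke the elementary fact that a compact co-oriented $1$--manifold in the oriented surface $D^2$ inherits an orientation, whence the signed total of its oriented boundary vanishes. For each of the four types this gives $||\tilde{\mathrm{I}}^j_\ast(f)|| = -\sum_\ast [\tilde{\mathrm{I}}^j_\ast : \tilde{\mathrm{II}}^{01}_\ast]\,||\tilde{\mathrm{II}}^{01}_\ast(F)|| - \sum_\ast [\tilde{\mathrm{I}}^j_\ast : \tilde{\mathrm{II}}^a_\ast]\,||\tilde{\mathrm{II}}^a_\ast(F)||$, the leading minus sign reflecting the usual opposition between the outward boundary on $S^1$ and the interior incidences. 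Substituting the incidence values obtained above yields the four displayed equations.

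The main obstacle I expect is pinning down the cusp incidence coefficients $[\tilde{\mathrm{I}}^j_\ast : \tilde{\mathrm{II}}^a_\ast]$ and keeping every sign consistent. Concretely, one must verify from \fullref{fig5}, together with the convention that the codimension-one co-orientations point from $\tilde{\mathbf{0}}_{\mathrm{e}}$ to $\tilde{\mathbf{0}}_{\mathrm{o}}$ and with the counterclockwise orientation of $S^1$, that $\tilde{\mathrm{II}}^a_{\mathrm{e}}$ is incident precisely to $\tilde{\mathrm{I}}^0_{\mathrm{o}}$ and $\tilde{\mathrm{I}}^1_{\mathrm{e}}$, and $\tilde{\mathrm{II}}^a_{\mathrm{o}}$ precisely to $\tilde{\mathrm{I}}^0_{\mathrm{e}}$ and $\tilde{\mathrm{I}}^1_{\mathrm{o}}$, with the signs that produce the asymmetric cusp terms in the four equations. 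Everything else is the standard boundary-counting bookkeeping of the universal complex of singular fibers.
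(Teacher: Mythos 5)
Your proposal is correct and is essentially the paper's own argument: the paper likewise takes the closures of $\tilde{\mathrm{I}}^0_{\ast}(F)$ and $\tilde{\mathrm{I}}^1_{\ast}(F)$ as co-oriented $1$--dimensional chains in $D^2$, writes their boundaries as the signed sum of the incident codimension-two loci ($\tilde{\mathrm{II}}^{01}_{\ast}$ and $\tilde{\mathrm{II}}^a_{\ast}$ being the only ones with nonzero integral incidence) plus the corresponding $\tilde{\mathrm{I}}^j_{\ast}(f)$ points on $S^1$, and concludes from the vanishing of the algebraic number of boundary points of a $1$--chain. The only cosmetic difference is that the paper phrases the objects as chains rather than compact $1$--manifolds (which is safer at the normal-crossing strata), but your signed counting handles those points in the same way.
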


\begin{proof}
Let us consider the 
closures of $\tilde{\mathrm{I}}^0_{\mathrm{o}}(F)$,
$\tilde{\mathrm{I}}^0_{\mathrm{e}}(F)$,
$\tilde{\mathrm{I}}^1_{\mathrm{o}}(F)$ and
$\tilde{\mathrm{I}}^1_{\mathrm{e}}(F)$ as
$1$--dimen\-sion\-al chains in $D^2$ with coefficients
in $\Z$.
Then by observing the adjacencies
for the singular fibers as we did
to obtain the formulae for
the coboundary homomorphism in
\eqref{eq:delta}, we get the
following equalities
as $0$--dimensional chains:
\begin{equation*}
\begin{split}
\partial \overline{\tilde{\mathrm{I}}^0_{\mathrm{o}}(F)}
& = \tilde{\mathrm{II}}^{01}_{\mathrm{o}}(F)
- \tilde{\mathrm{II}}^{01}_{\mathrm{e}}(F)
+ \tilde{\mathrm{II}}^a_{\mathrm{e}}(F)
+ \tilde{\mathrm{I}}^0_{\mathrm{o}}(f), \\
\partial \overline{\tilde{\mathrm{I}}^0_{\mathrm{e}}(F)}
& = \tilde{\mathrm{II}}^{01}_{\mathrm{o}}(F)
- \tilde{\mathrm{II}}^{01}_{\mathrm{e}}(F)
- \tilde{\mathrm{II}}^a_{\mathrm{o}}(F)
+ \tilde{\mathrm{I}}^0_{\mathrm{e}}(f), \\
\partial \overline{\tilde{\mathrm{I}}^1_{\mathrm{o}}(F)}
& = -\tilde{\mathrm{II}}^{01}_{\mathrm{o}}(F)
+ \tilde{\mathrm{II}}^{01}_{\mathrm{e}}(F)
+ \tilde{\mathrm{II}}^a_{\mathrm{o}}(F)
+ \tilde{\mathrm{I}}^1_{\mathrm{o}}(f), \\
\partial \overline{\tilde{\mathrm{I}}^1_{\mathrm{e}}(F)}
& = -\tilde{\mathrm{II}}^{01}_{\mathrm{o}}(F)
+ \tilde{\mathrm{II}}^{01}_{\mathrm{e}}(F)
- \tilde{\mathrm{II}}^a_{\mathrm{e}}(F)
+ \tilde{\mathrm{I}}^1_{\mathrm{e}}(f).
\end{split}
\end{equation*}
Since the algebraic number of
points in the boundary of a $1$--dimensional
chain is always equal to zero,
we get the desired equalities.
\end{proof}\eject

\begin{rmk}\label{rmk:newproof}
By the above lemma, we see easily that
$$
||\tilde{\mathrm{I}}^0_{\mathrm{o}}(f)||
+ ||\tilde{\mathrm{I}}^1_{\mathrm{e}}(f)|| = 0
\quad \text{ and } \quad
||\tilde{\mathrm{I}}^0_{\mathrm{e}}(f)||
+ ||\tilde{\mathrm{I}}^1_{\mathrm{o}}(f)|| = 0.
$$
This gives an alternative proof 
of the fact that $\beta_1(f) = 0$
for a $C^\infty$ stable map $f$
of a closed surface into $S^1$ or into $\R$
$(\subset S^1)$,
where $\beta_1$ is the
cohomology class described
in \fullref{lem:coho}
(see also \cite[Lemma~14.1]{Saeki04}).
\end{rmk}

For $\tilde{\mathrm{II}}^a$,
we consider the co-orientation
as depicted in \fullref{fig6}.
Since we have
$$||\tilde{\mathrm{II}}^a(F)||
= ||\tilde{\mathrm{II}}^a_{\mathrm{o}}(F)||
+ ||\tilde{\mathrm{II}}^a_{\mathrm{e}}(F)||,$$
we immediately get the following.

\begin{figure}[ht!]\small
\begin{center}
\labellist\hair 5pt
\pinlabel {$\tilde{\mathrm{II}}^a$} [b] <3pt,0pt> at 281 709
\pinlabel {$\tilde{\mathrm{I}}^0$} [t] at 204 558
\pinlabel {$\tilde{\mathrm{I}}^1$} [t] <2pt,0pt> at 358 558
\endlabellist
\includegraphics[width=0.2\linewidth]{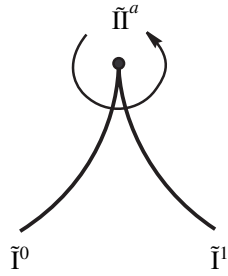}\vspace{2mm}
\end{center}
\caption{Co-orientation for 
$\tilde{\mathrm{II}}^a$}
\label{fig6}
\end{figure}

\begin{prop}\label{prop:cusp}
The algebraic number of singular fibers
of $F$ containing cusps is equal to
\begin{equation}
-||\tilde{\mathrm{I}}^0_{\mathrm{o}}(f)||
+ ||\tilde{\mathrm{I}}^0_{\mathrm{e}}(f)||
= -||\tilde{\mathrm{I}}^1_{\mathrm{o}}(f)||
+ ||\tilde{\mathrm{I}}^1_{\mathrm{e}}(f)||.
\label{eq:cob}
\end{equation}
\end{prop}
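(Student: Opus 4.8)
The plan is to observe that \fullref{prop:cusp} is an immediate consequence of the preceding lemma. First I would note that among the singular fibers listed in \fullref{fig43}, the only type that contains a cusp singular point is $\tilde{\mathrm{II}}^a$; hence the algebraic number of singular fibers of $F$ containing cusps is precisely $||\tilde{\mathrm{II}}^a(F)||$. With the co-orientation fixed in \fullref{fig6}, this count splits according to the parity of the number of fiber components as $||\tilde{\mathrm{II}}^a(F)|| = ||\tilde{\mathrm{II}}^a_{\mathrm{o}}(F)|| + ||\tilde{\mathrm{II}}^a_{\mathrm{e}}(F)||$, as recorded just above the statement. So it suffices to extract this sum from the formulae of the lemma.

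Next I would subtract the first equation of the lemma from the second. On the right-hand side the two $\tilde{\mathrm{II}}^{01}_{\mathrm{o}}(F)$ terms and the two $\tilde{\mathrm{II}}^{01}_{\mathrm{e}}(F)$ terms cancel, leaving only $||\tilde{\mathrm{II}}^a_{\mathrm{o}}(F)|| + ||\tilde{\mathrm{II}}^a_{\mathrm{e}}(F)||$; on the left one obtains $-||\tilde{\mathrm{I}}^0_{\mathrm{o}}(f)|| + ||\tilde{\mathrm{I}}^0_{\mathrm{e}}(f)||$. This yields the first equality of \eqref{eq:cob}. Subtracting the third equation of the lemma from the fourth produces, by the identical cancellation, $-||\tilde{\mathrm{I}}^1_{\mathrm{o}}(f)|| + ||\tilde{\mathrm{I}}^1_{\mathrm{e}}(f)|| = ||\tilde{\mathrm{II}}^a_{\mathrm{o}}(F)|| + ||\tilde{\mathrm{II}}^a_{\mathrm{e}}(F)||$, giving the second equality and completing the identification of both expressions in \eqref{eq:cob} with $||\tilde{\mathrm{II}}^a(F)||$.

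Since the arithmetic is entirely routine, the only point requiring care --- and the place I would expect to spend real effort --- is the compatibility of co-orientations. The signs in the lemma were computed using the co-orientations of $\tilde{\mathrm{II}}^a_{\mathrm{o}}$ and $\tilde{\mathrm{II}}^a_{\mathrm{e}}$ fixed in \fullref{fig5}, whereas the cusp count is taken with respect to the co-orientation of $\tilde{\mathrm{II}}^a$ fixed in \fullref{fig6}. I would verify that the \fullref{fig6} co-orientation restricts to the two \fullref{fig5} co-orientations with the \emph{same} sign, so that the local contributions genuinely add rather than partially cancel; this is exactly what makes the decomposition $||\tilde{\mathrm{II}}^a(F)|| = ||\tilde{\mathrm{II}}^a_{\mathrm{o}}(F)|| + ||\tilde{\mathrm{II}}^a_{\mathrm{e}}(F)||$ hold, and is the substantive content behind the word ``immediately'' in the text preceding the statement.
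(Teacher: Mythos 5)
Your proposal is correct and follows exactly the paper's (largely implicit) argument: the paper likewise derives \eqref{eq:cob} by combining the four equalities of the preceding lemma with the decomposition $||\tilde{\mathrm{II}}^a(F)|| = ||\tilde{\mathrm{II}}^a_{\mathrm{o}}(F)|| + ||\tilde{\mathrm{II}}^a_{\mathrm{e}}(F)||$ under the co-orientation of Figure 6, stating that the proposition follows ``immediately.'' Your explicit attention to the compatibility of the co-orientations in Figures 5 and 6 is precisely the point the paper addresses by fixing that decomposition just before the statement.
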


Note that the integer given by \eqref{eq:cob}
is a fold cobordism invariant as shown
in \fullref{section4}.

Now, let $g \co (\R^3, 0) \to (\R^2, 0)$
be a generic smooth map germ.
Suppose that the origin
is isolated in $g^{-1}(0)$, i.e.\ 
$0 \not\in \overline{g^{-1}(0) \setminus
\{0\}}$.
Then for $\varepsilon > 0$ sufficiently
small, $\tilde{S}^2_\varepsilon
= g^{-1}(S^1_\varepsilon)$ is
diffeomorphic to $S^2$, and $g|_{g^{-1}
(D^2_\varepsilon \setminus \{0\})}$
is $C^\infty$ equivalent to $g_\partial
\times \id_{(0, \varepsilon)}$, where
$$g_\partial = g|_{g^{-1}(S^1_\varepsilon)} \co
\tilde{S}^2_\varepsilon \to S^1_\varepsilon$$
is a $C^\infty$ stable map.

Then, we have the following.
\eject

\begin{prop}\label{thm:inv}
Let $g \co (\R^3, 0) \to (\R^2, 0)$ be
a generic smooth map germ such that $0$
is isolated in $g^{-1}(0)$. Then
the algebraic number of
cusps of a $C^\infty$ stable
perturbation $\tilde{g}$ of a representative of $g$
is an invariant of the topological
$\mathcal{A}_+$--equivalence class of $g$, and is
equal to
\begin{equation}
-||\tilde{\mathrm{I}}^0_{\mathrm{o}}(g_\partial)||
+ ||\tilde{\mathrm{I}}^0_{\mathrm{e}}(g_\partial)||
= -||\tilde{\mathrm{I}}^1_{\mathrm{o}}(g_\partial)||
+ ||\tilde{\mathrm{I}}^1_{\mathrm{e}}(g_\partial)||,
\label{eq:inv}
\end{equation}
where 
$$g_\partial = g|_{g^{-1}(S^1_\varepsilon)} \co
\tilde{S}^2_\varepsilon \to S^1_\varepsilon$$
and $\varepsilon > 0$ is sufficiently small.
In particular, the absolute value of the
algebraic number of cusps of $\tilde{g}$
is an invariant of the topological
$\mathcal{A}$--equivalence class of $g$.
\end{prop}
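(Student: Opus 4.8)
The plan is to reduce the statement to \fullref{prop:cusp} by realizing the stable perturbation $\tilde{g}$ as a map $F$ of the type considered there, and then to argue that the resulting integer, which depends only on the ``link'' $g_\partial$, is a topological invariant because it is computed from $C^0$ fiber types together with the orientation of $S^1$.

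First I would carry out the cobordism-theoretic computation establishing the formula \eqref{eq:inv}. Since $0$ is isolated in $g^{-1}(0)$, for small $\varepsilon > 0$ the manifold $\tilde{S}^2_\varepsilon = g^{-1}(S^1_\varepsilon)$ is a $2$--sphere lying in $\Int D^3_\delta$, and it bounds a compact $3$--manifold $W$ with $\partial W = \tilde{S}^2_\varepsilon$. By condition (G4) of \fullref{dfn:generic}, $g$ is the product $g_\partial \times \id_{(0,\varepsilon)}$ near $\partial W$, so a $C^\infty$ stable perturbation $\tilde{g}$ of $g|_W$ may be chosen to agree with $g$, hence to remain such a product, near $\partial W$. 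Setting $F = \tilde{g}|_W$, the conditions (1)--(4) preceding \fullref{prop:cusp} hold with $f = F|_{\partial W} = g_\partial$. Every cusp point of $\tilde{g}$ lies on a singular fiber of type $\tilde{\mathrm{II}}^a$, and a direct comparison of the sign convention of \fullref{fig6-1} with the co-orientation of \fullref{fig6} shows that the algebraic number of cusps of $\tilde{g}$ equals $||\tilde{\mathrm{II}}^a(F)||$. Applying \fullref{prop:cusp} then gives that this number equals \eqref{eq:inv}, which is the asserted formula.

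Next I would prove the topological invariance. The integer \eqref{eq:inv} is precisely $s^{0\ast}_1\alpha_2(g_\partial) = -||\tilde{\mathrm{I}}^0_{\mathrm{o}}(g_\partial)|| + ||\tilde{\mathrm{I}}^0_{\mathrm{e}}(g_\partial)||$, and so depends only on the map $g_\partial \co \tilde{S}^2_\varepsilon \to S^1_\varepsilon$. Given a topological $\mathcal{A}_+$--equivalence $g' = \varphi^{-1} \circ g \circ \Phi$, the topologically conical structure of a generic germ away from the origin, guaranteed by (G1)--(G4), shows that $\Phi$ and $\varphi$ induce a topological conjugacy between $g_\partial$ and $g'_\partial$. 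The classes $\tilde{\mathrm{I}}^0_{\mathrm{o}}$ and $\tilde{\mathrm{I}}^0_{\mathrm{e}}$ are defined via $C^0$ equivalence modulo two circle components and are distinguished only by the parity of the number of connected components of the fiber, so a conjugacy by homeomorphisms preserves them; moreover the sign attached to each such fiber is determined by the chosen co-orientation, which depends only on the orientation of $S^1 = \partial D^2$. Since $\varphi$ preserves the orientation of $\R^2$, it preserves that of $S^1$, so every contribution to \eqref{eq:inv} is preserved. Hence \eqref{eq:inv}, and therefore the algebraic number of cusps, is a topological $\mathcal{A}_+$--invariant; passing to absolute values removes the dependence on the orientation of $\R^2$ and yields the topological $\mathcal{A}$--invariance.

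The main obstacle I expect lies in two places. On the computational side one must check carefully that the signed cusp count of \fullref{fig6-1} agrees with $||\tilde{\mathrm{II}}^a(F)||$ for the co-orientation of \fullref{fig6}; this is a purely local check at a cusp, but it is where a sign error would most naturally arise. On the conceptual side, the genuinely delicate point is justifying that a topological $\mathcal{A}_+$--equivalence descends to a topological conjugacy of links that preserves the parity of the number of fiber components, since a homeomorphism need not respect the smooth product structure of (G4); this is exactly where the genericity hypotheses (G1)--(G4) and the resulting conical structure must be used in full.
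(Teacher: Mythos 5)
Your derivation of the formula \eqref{eq:inv} is essentially the paper's: one applies \fullref{prop:cusp} to the stable perturbation viewed as a map $F \co W \to D^2$ whose boundary restriction is (topologically) $g_\partial$, after checking that the signed cusp count of \fullref{fig6-1} equals $||\tilde{\mathrm{II}}^a_{\mathrm{o}}(F)||+||\tilde{\mathrm{II}}^a_{\mathrm{e}}(F)||$ for the co-orientation of \fullref{fig6}. (The paper treats an \emph{arbitrary} stable perturbation by observing, via (G2) and (G4), that $g$ and $\tilde g$ are topologically $\mathcal{A}_+$--equivalent over the annulus $\Int{D^2_\varepsilon}\setminus D^2_{\varepsilon/2}$, rather than choosing $\tilde g$ to agree with $g$ near the boundary; your version only covers one choice of perturbation, but that is a minor point.)

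The genuine gap is in your invariance argument. A topological $\mathcal{A}_+$--equivalence $g'=\varphi^{-1}\circ g\circ\Phi$ does \emph{not} induce a topological conjugacy between $g_\partial$ and $g'_\partial$. What $\Phi$ and $\varphi$ give directly is a conjugacy between $g'_\partial$ and the restriction of $g$ over the topological circle $\varphi(S^1_{\varepsilon'})$; this circle is in general not a round level circle $S^1_t$, and it can cross a given radial branch of the discriminant of $g$ (which, in the cone structure of (G4), is a union of radial arcs) several times, producing extra singular fibers. Hence $g|_{g^{-1}(\varphi(S^1_{\varepsilon'}))}$ need not be conjugate to $g_\partial$, and the cone structure cannot repair this, since $\varphi$ need not respect the radial foliation. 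Indeed, if the conjugacy class of the link map were a topological invariant of the germ, the proposition would be an immediate triviality. What is true --- and what the paper actually proves --- is the weaker statement that the two maps are topologically fold-cobordant: $g$ restricted over the annulus $Y=D^2_\varepsilon\setminus\varphi(\Int{D^2_{\varepsilon'}})$ is a (topological) fold cobordism between $g_\partial$ and $\varphi\circ g'_\partial\circ\Phi^{-1}$ (see \fullref{rmk:top}); the integer \eqref{eq:inv} is a cobordism invariant by the results of \fullref{section4} extended to maps into $S^1$; and the algebraic counts for $\varphi\circ g'_\partial\circ\Phi^{-1}$ and for $g'_\partial$ coincide because $\varphi$ preserves the orientation of $\R^2$. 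You need to replace the conjugacy claim by this cobordism step; only the signed counts of singular fibers, not the conjugacy class of the link, survive a topological $\mathcal{A}_+$--equivalence.
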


For the proof, we need the following observation.

\begin{rmk}\label{rmk:top}
The notion of a singular fiber and the
corresponding $C^0$ equivalence relation
can be generalized to
continuous maps between topological spaces.
In particular, the theory as developed in
\cite{Saeki04} can be generalized to
proper continuous maps between topological
manifolds which are topologically equivalent
to smooth Thom maps between smooth manifolds, 
and their topological cobordisms.
This is because the classification of
singular fibers is based on the ``$C^0$ equivalence''
and not on the ``$C^\infty$ equivalence''
in the theory developed in \cite{Saeki04}.
\end{rmk}

\begin{proof}[Proof of Proposition~\textup{\ref{thm:inv}}]
Let $g' \co (\R^3, 0) \to (\R^2, 0)$ be a smooth
map germ which is generic and is topologically
$\mathcal{A}_+$--equivalent to $g$.
Thus there exist homeomorphism germs
$\Phi \co (\R^3, 0) \to (\R^3, 0)$ and
$\varphi \co (\R^2, 0) \to (\R^2, 0)$
such that $g' = \varphi^{-1} \circ g \circ \Phi$
and $\varphi$ is orientation preserving.

Let $\varepsilon$ (resp.\ 
$\varepsilon'$) be a small
positive real number as above
for $g$ (resp.\ for $g'$). 
We can take $\varepsilon'$
sufficiently small so that
$\varphi(D^2_{\varepsilon'})
\subset \Int{D^2_\varepsilon}$.
Set 
$$Y = D^2_\varepsilon \setminus
\varphi(\Int{D^2_{\varepsilon'}}) \quad
\text{ and } \quad W = g^{-1}(Y).$$ 
We can
show that $Y$ is homeomorphic
to $S^1 \times [0, 1]$ and
$W$ is homeomorphic to $S^2 \times [0, 1]$.
Note that $g^{-1}(\varphi(S^1_{\varepsilon'}))
= \Phi(g'^{-1}(S^1_{\varepsilon'}))$.

Then the map $g|_W \co W \to Y$ gives a
(fold) cobordism between 
$$g_\partial = g|_{g^{-1}(S^1_\varepsilon)} \co
g^{-1}(S^1_\varepsilon) \to S^1_\varepsilon$$
and 
\begin{equation}
\varphi \circ g'_\partial \circ
\Phi^{-1} \co \Phi(g'^{-1}(S^1_{\varepsilon'}))
\to \varphi(S^1_{\varepsilon'}).
\label{eq:g'}
\end{equation}
(Precisely speaking, $W$ and $Y$ are only topological
manifolds with boundary and $g|_W$
is merely a continuous map. Nevertheless,
we can regard $g|_W$  as a topological fold cobordism
in an appropriate sense. See \fullref{rmk:top}.)

Then by the results obtained in \fullref{section4},
we have\footnote{In \fullref{section4} we have considered
Morse functions on surfaces: however, almost all
the arguments work also for $C^\infty$ stable maps
into $S^1$.}
$$-||\tilde{\mathrm{I}}^0_{\mathrm{o}}(g_\partial)||
+ ||\tilde{\mathrm{I}}^0_{\mathrm{e}}(g_\partial)||
= -||\tilde{\mathrm{I}}^0_{\mathrm{o}}(g'_\partial)||
+ ||\tilde{\mathrm{I}}^0_{\mathrm{e}}(g'_\partial)||.$$
This is because $\varphi$ preserves the orientation of
$\R^2$ and hence
the algebraic number of singular fibers of
a given type for the map \eqref{eq:g'} is
equal to that for $g'_\partial$.
Therefore, the integer \eqref{eq:inv}
is an invariant of the topological
$\mathcal{A}_+$--equivalence class.
(Note that the equality in \eqref{eq:inv}
follows from \fullref{prop:cusp}.)

On the other hand, let $\tilde{g} \co U \to \Int{D^2_\varepsilon}$
be a proper $C^\infty$ stable perturbation of $g|_U$,
where $U = g^{-1}(\Int{D^2_\varepsilon})$.
Set 
$$U' = g^{-1}(\Int{D^2_\varepsilon} \setminus
D^2_{\varepsilon/2}) \quad
\text{ and } \quad U'' = \tilde{g}^{-1}
(\Int{D^2_\varepsilon} \setminus
D^2_{\varepsilon/2}).$$
Then, by virtue of conditions (G2) and (G4) of
\fullref{dfn:generic}, we see that
$g|_{U'}$ and $\tilde{g}|_{U''}$ are
topologically $\mathcal{A}_+$--equivalent
in a sense similar to \fullref{dfn:topA}.
Then we see that
the integer \eqref{eq:inv}
is equal to the algebraic number of
cusps of $\tilde{g}$
by \fullref{prop:cusp}.

It is easy to observe that if we
reverse the orientation of $\R^2$, then
the algebraic number of cusps changes the sign.
Thus the last assertion of \fullref{thm:inv}
follows immediately.
This completes the proof.
\end{proof}

In order to generalize the above result
to the case where the origin may
not necessarily be isolated in
$g^{-1}(0)$, let us consider the
following situation.
Let $F \co W \to D^2$ be a smooth
map of a compact $3$--dimensional manifold
$W$ with nonempty boundary $\partial W$
with the following properties:
\begin{itemize}
\item[(1)] $\partial W = M \cup P$, where
$M$ is a compact surface with boundary,
$P$ is a finite disjoint union of $2$--dimensional
disks, and $M \cap P = \partial M = \partial P$,
\item[(2)] $F^{-1}(\partial D^2) = M$,
\item[(3)] $F|_P \co P \to D^2$ is a submersion
so that, in particular, it is a diffeomorphism
on each component of $P$,
\item[(4)] $f = F|_M \co M \to \partial D^2 = S^1$ is $C^\infty$
stable,
\item[(5)] $F|_{M \times [0, 1)}
= f \times \mathrm{id}_{[0, 1)}$,
where we identify the small open ``collar neighborhood''
of $M$ (or $\partial D^2$) in $W$ (resp.\ in
$D^2$) with $M \times [0, 1)$ (resp.\ 
$\partial D^2 \times [0, 1)$), and
\item[(6)] $F|_{W \setminus M} \co W \setminus M
\to \Int{D^2}$ is a proper $C^\infty$ stable
map.
\end{itemize}

In what follows, for simplicity
we assume that $W$ and $M$ are
orientable, which is
enough for our purpose.

Then we can get a list of the $C^0$ equivalence
classes of singular fibers that appear
for $F$ as above, which is similar
to \fullref{fig43}. For these fibers,
let us consider the following
equivalence relation: two fibers are equivalent
if one is
$C^0$ equivalent to the other one after adding
even numbers of regular components to
both of the fibers. Note that
in contrast to the case where
$\partial M = \emptyset$, regular fibers
consist of circles and intervals. However,
when we count the number of regular
components, we do not distinguish them.

Then we easily get the following.

\begin{lem}
Those equivalence classes of singular fibers
which are co-orientable are $\tilde{\mathfrak{F}}_\ast$,
where $\ast = {\mathrm{o}}$ or ${\mathrm{e}}$,
and $\tilde{\mathfrak{F}}$ are
as depicted in \fullref{fig7}.
\end{lem}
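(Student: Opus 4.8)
The plan is to reproduce, in the present bounded setting, the line of argument used for \fullref{lem:coori}, the only new ingredient being that fibers may now contain interval components whose endpoints lie on $P$. First I would assemble the complete list of $C^0$ equivalence classes of singular fibers occurring for a map $F$ as above. Since $F|_P$ is a submersion and $F$ restricts to the product $f \times \mathrm{id}$ on a collar of $M$, the singular points of $F$ lying over the interior of $D^2$ are interior points of $W$, and the local model of the fiber near such a point is exactly one of those of \fullref{fig43}; the difference from the closed case is only that some of the strands emanating from the singular point, as well as the remaining components of the fiber, may now be arcs terminating on $P$ rather than circles. Enumerating the resulting connected local configurations produces the list of singular fibers for $F$, of which \fullref{fig7} records the co-orientable members.

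Next I would decide co-orientability of each class in the sense of \cite[Definition~10.5]{Saeki04}, precisely as in the proof of \fullref{lem:coori}, by reading off the local degenerations of fibers of the type shown in \cite[Figs.~3.5--3.8]{Saeki04}. The key observation is that the chosen equivalence relation quotients by an \emph{even} number of regular components, with circles and arcs counted on the same footing; hence each stratum acquires the two labels $\mathrm{o}$ and $\mathrm{e}$ according to the parity of the total number of regular components of the fiber. For a codimension-one class, crossing the corresponding curve in $D^2$ changes this total by one exactly when the two local sides carry regular fibers of opposite parity, so that an adjacency-preserving local homeomorphism must then fix each side and the normal direction is co-oriented; when the parity is unchanged the two sides are indistinguishable and can be interchanged, so the class fails to be co-orientable. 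This dichotomy, together with the corresponding degeneration analysis for the codimension-two classes (cusp fibers and crossing fibers), reproduces the asserted list.

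The step I expect to be the main obstacle is the treatment of the genuinely new fibers, namely those in which an interior fold or cusp point sits on a fiber some of whose strands are boundary arcs: here I must verify that incorporating arcs does not disturb the parity count, which is precisely where counting arcs and circles together is essential, and I must exhibit, for every class \emph{absent} from \fullref{fig7}, an explicit adjacency-preserving local homeomorphism reversing the normal orientation (respectively permuting the adjacent strata nontrivially in the codimension-two case). Once these local pictures are drawn, the verification is a finite case check entirely parallel to the closed case, and it yields exactly the classes $\tilde{\mathfrak{F}}_\ast$, $\ast = \mathrm{o}, \mathrm{e}$, depicted in \fullref{fig7}.
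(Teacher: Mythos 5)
Your proposal is correct and follows essentially the same route as the paper, which offers no written proof of this lemma beyond ``we easily get the following,'' implicitly invoking the same degeneration analysis used for \fullref{lem:coori}. Your parity dichotomy for the codimension-one classes (co-orientable exactly when the two adjacent regular fibers have different component parities, with arcs and circles counted together) and the finite case-check for the codimension-two classes are exactly the intended argument.
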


\begin{figure}[ht!]\small
\centering
\labellist
\pinlabel {$\kappa = 1$} [rB] <-30pt,0pt> at -75 733
\pinlabel {$\kappa = 2$} [rB] <-30pt,0pt> at -75 604
\pinlabel {$\tilde{\mathrm{I}}^0$} [rB] at -12 733
\pinlabel {$\tilde{\mathrm{I}}^1$} [rB] at 162 733
\pinlabel {$\tilde{\mathrm{I}}^\alpha$} [rB] at 410 733
\pinlabel {$\tilde{\mathrm{II}}^{01}$} [rB] at -75 604
\pinlabel {$\tilde{\mathrm{II}}^{0\alpha}$} [rB] at 162 604
\pinlabel {$\tilde{\mathrm{II}}^{1\alpha}$} [rB] at 410 604
\pinlabel {$\tilde{\mathrm{II}}^\beta$} [r] at -40 470
\pinlabel {$\tilde{\mathrm{II}}^\gamma$} [r] at 410 470
\pinlabel {$\tilde{\mathrm{II}}^a$} [r] at 162 470
\endlabellist
\includegraphics[width=0.65\linewidth]{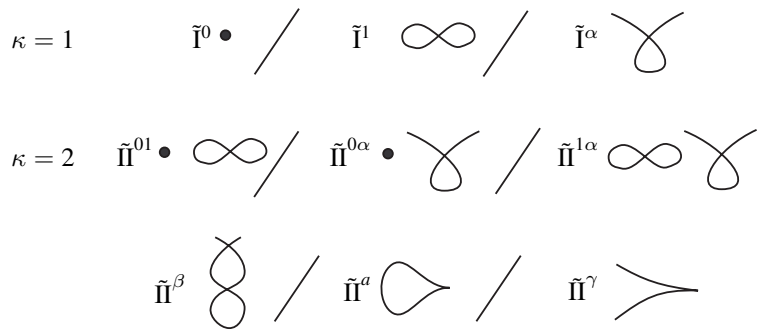}
\caption{List of co-orientable singular fibers for $F$}
\label{fig7}
\end{figure}

We denote by $\tilde{\mathbf{0}}_\ast$
the equivalence classes corresponding to
regular fibers. Note that they
are also co-orientable.
We fix a co-orientation for each
co-orientable equivalence
class of singular fibers as above.
Then for the coboundary
homomorphism, we get the following:
\begin{equation*}
\begin{split}
\delta_1(\tilde{\mathrm{I}}^0_{\mathrm{o}}) &
= \tilde{\mathrm{II}}^{01}_{\mathrm{o}}
- \tilde{\mathrm{II}}^{01}_{\mathrm{e}}
- \tilde{\mathrm{II}}^a_{\mathrm{e}}
- \tilde{\mathrm{II}}^{0\alpha}_{\mathrm{o}}
+ \tilde{\mathrm{II}}^{0\alpha}_{\mathrm{e}}
- \tilde{\mathrm{II}}^\gamma_{\mathrm{e}}, \\
\delta_1(\tilde{\mathrm{I}}^0_{\mathrm{e}}) &
= \tilde{\mathrm{II}}^{01}_{\mathrm{o}}
- \tilde{\mathrm{II}}^{01}_{\mathrm{e}}
+ \tilde{\mathrm{II}}^a_{\mathrm{o}}
- \tilde{\mathrm{II}}^{0\alpha}_{\mathrm{o}}
+ \tilde{\mathrm{II}}^{0\alpha}_{\mathrm{e}}
+ \tilde{\mathrm{II}}^\gamma_{\mathrm{o}}, \\
\delta_1(\tilde{\mathrm{I}}^1_{\mathrm{o}}) &
= - \tilde{\mathrm{II}}^{01}_{\mathrm{o}}
+ \tilde{\mathrm{II}}^{01}_{\mathrm{e}}
- \tilde{\mathrm{II}}^a_{\mathrm{o}}
- \tilde{\mathrm{II}}^{1\alpha}_{\mathrm{o}}
+ \tilde{\mathrm{II}}^{1\alpha}_{\mathrm{e}}
- \tilde{\mathrm{II}}^\beta_{\mathrm{e}}, 
\end{split}
\end{equation*}

\begin{equation*}
\begin{split}
\delta_1(\tilde{\mathrm{I}}^1_{\mathrm{e}}) &
= - \tilde{\mathrm{II}}^{01}_{\mathrm{o}}
+ \tilde{\mathrm{II}}^{01}_{\mathrm{e}}
+ \tilde{\mathrm{II}}^a_{\mathrm{e}}
- \tilde{\mathrm{II}}^{1\alpha}_{\mathrm{o}}
+ \tilde{\mathrm{II}}^{1\alpha}_{\mathrm{e}}
+ \tilde{\mathrm{II}}^\beta_{\mathrm{o}}, \\
\delta_1(\tilde{\mathrm{I}}^\alpha_{\mathrm{o}}) &
= \tilde{\mathrm{II}}^{0\alpha}_{\mathrm{o}}
- \tilde{\mathrm{II}}^{0\alpha}_{\mathrm{e}}
+ \tilde{\mathrm{II}}^{1\alpha}_{\mathrm{o}}
- \tilde{\mathrm{II}}^{1\alpha}_{\mathrm{e}}
+ \tilde{\mathrm{II}}^\beta_{\mathrm{e}}
- \tilde{\mathrm{II}}^\gamma_{\mathrm{o}}, \\
\delta_1(\tilde{\mathrm{I}}^\alpha_{\mathrm{e}}) &
= \tilde{\mathrm{II}}^{0\alpha}_{\mathrm{o}}
- \tilde{\mathrm{II}}^{0\alpha}_{\mathrm{e}}
+ \tilde{\mathrm{II}}^{1\alpha}_{\mathrm{o}}
- \tilde{\mathrm{II}}^{1\alpha}_{\mathrm{e}}
- \tilde{\mathrm{II}}^\beta_{\mathrm{o}}
+ \tilde{\mathrm{II}}^\gamma_{\mathrm{e}}.
\end{split}
\end{equation*}
Then by the same argument as before,
we see that 
\begin{equation}
||\tilde{\mathrm{I}}^\alpha_{\mathrm{o}}(f)||
- ||\tilde{\mathrm{I}}^\alpha_{\mathrm{e}}(f)||
+ ||\tilde{\mathrm{I}}^1_{\mathrm{o}}(f)||
- ||\tilde{\mathrm{I}}^1_{\mathrm{e}}(f)||
\quad \text{ and } \quad
- ||\tilde{\mathrm{I}}^0_{\mathrm{o}}(f)||
+ ||\tilde{\mathrm{I}}^0_{\mathrm{e}}(f)||
\label{eq:cusps}
\end{equation}
are fold cobordism invariants of $f$
in the following sense. 

\begin{dfn}\label{dfn:cob-b}
Let $f_i \co M_i \to S^1$, $i = 0, 1$, be
proper $C^\infty$ stable maps of compact
surfaces with boundary such that $f_i|_{\partial M_i}$
are submersions. We say that $f_0$ and $f_1$ are
(fold) \emph{cobordant} if there exist a compact
$3$--dimensional manifold $X$ with corners and a fold
map $F \co X \to S^1 \times [0, 1]$ with
the following properties:
\begin{itemize}
\item[(1)] $\partial X = M_0 \cup Q \cup M_1$,
where $M_0 \cap M_1 = \emptyset$ and $Q$
is a compact surface with boundary $\partial
Q = (Q \cap M_0) \cup (Q \cap M_1)$,
\item[(2)] $X$ has corners along $\partial Q$,
\item[(3)] $F|_Q \co Q \to S^1 \times [0, 1]$ is
a submersion, and
\item[(4)] we have
\begin{align*}
F|_{M_0 \times [0, \varepsilon)} & = f_0 \times \id_{[0,
\varepsilon)} \co M_0 \times [0, \varepsilon) \to S^1 \times
[0, \varepsilon), \quad \text{\rm and} \\
F|_{M_1 \times (1-\varepsilon, 1]} 
& = f_1 \times \id_{(1-\varepsilon,
1]} \co M_1 \times (1-\varepsilon, 1] \to S^1 \times
(1-\varepsilon, 1]
\end{align*}
for some sufficiently small $\varepsilon > 0$, where
we identify the open 
``collar neighborhoods'' of $M_0$ and $M_1$ in
$X$ with $M_0 \times [0, \varepsilon)$ and $M_1 
\times (1-\varepsilon,
1]$ respectively.
\end{itemize}
\end{dfn}

Furthermore,
we see that the algebraic number of
cusps of $F$ given by
$$
||\tilde{\mathrm{II}}^a_{\mathrm{o}}(F)||
+ ||\tilde{\mathrm{II}}^a_{\mathrm{e}}(F)||
+ ||\tilde{\mathrm{II}}^\gamma_{\mathrm{o}}(F)||
+ ||\tilde{\mathrm{II}}^\gamma_{\mathrm{e}}(F)||
$$
is equal to both of the integers \eqref{eq:cusps}.

Now, let $g \co (\R^3, 0) \to (\R^2, 0)$ be
a generic smooth map germ.
Then by applying the above
observations to the stable perturbation of the map
$$g|_{D^3_\delta \cap g^{-1}(\Int{D^2_\varepsilon})} \co
D^3_\delta \cap g^{-1}(\Int{D^2_\varepsilon}) \to 
\Int{D^2_\varepsilon},$$
we get the following, where $\delta$ and $\varepsilon$
are as in \fullref{dfn:generic}.\eject

\begin{prop}\label{thm:inv2}
Let $g \co (\R^3, 0) \to (\R^2, 0)$ be
a generic smooth map germ. Then
the algebraic number of
cusps of a $C^\infty$ stable
perturbation $\tilde{g}$ of a representative of $g$
is an invariant of the topological
$\mathcal{A}_+$--equivalence class of $g$, and is
equal to
\begin{equation*}
-||\tilde{\mathrm{I}}^0_{\mathrm{o}}(g_\partial)||
+ ||\tilde{\mathrm{I}}^0_{\mathrm{e}}(g_\partial)||
= ||\tilde{\mathrm{I}}^\alpha_{\mathrm{o}}(g_\partial)||
- ||\tilde{\mathrm{I}}^\alpha_{\mathrm{e}}(g_\partial)||
+ ||\tilde{\mathrm{I}}^1_{\mathrm{o}}(g_\partial)||
- ||\tilde{\mathrm{I}}^1_{\mathrm{e}}(g_\partial)||,
\end{equation*}
where 
$$g_\partial = g|_{D^3_\delta \cap g^{-1}(S^1_\varepsilon)} \co
D^3_\delta \cap g^{-1}(S^1_\varepsilon) \to S^1_\varepsilon$$
and $0 < \varepsilon << \delta$ are sufficiently small.
In particular, the absolute value of the
algebraic number of cusps of $\tilde{g}$
is an invariant of the topological
$\mathcal{A}$--equivalence class of $g$.
\end{prop}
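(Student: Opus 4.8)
The plan is to run the argument from the proof of \fullref{thm:inv} in the boundary setting prepared immediately above, replacing the closed surface $\tilde{S}^2_\varepsilon$ by the compact surface with boundary $D^3_\delta \cap g^{-1}(S^1_\varepsilon)$ and the cobordism of \fullref{dfn:cob} by that of \fullref{dfn:cob-b}. I would lean on the two facts recorded just before the statement: that the integer $-||\tilde{\mathrm{I}}^0_{\mathrm{o}}(f)|| + ||\tilde{\mathrm{I}}^0_{\mathrm{e}}(f)||$ (equivalently, the other expression in \eqref{eq:cusps}) is a fold cobordism invariant in the sense of \fullref{dfn:cob-b}, and that for a map $F$ of the boundary type this integer equals the algebraic number of cusps of $F$.

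First I would take a generic germ $g'$ that is topologically $\mathcal{A}_+$--equivalent to $g$, with homeomorphism germs $\Phi, \varphi$ as in \fullref{dfn:topA} and $\varphi$ orientation preserving. Fixing $\delta, \varepsilon$ for $g$ and $\delta', \varepsilon'$ for $g'$ as in \fullref{dfn:generic} and shrinking so that $\varphi(D^2_{\varepsilon'}) \subset \Int D^2_\varepsilon$, I would set $Y = D^2_\varepsilon \setminus \varphi(\Int D^2_{\varepsilon'})$ and $W = D^3_\delta \cap g^{-1}(Y)$. Conditions (G1)--(G4) then present $g|_W \co W \to Y \cong S^1 \times [0,1]$ as a cobordism of the type in \fullref{dfn:cob-b}: the faces $M_0, M_1$ of $\partial W$ over the two boundary circles of $Y$ carry $g_\partial$ and $\varphi \circ g'_\partial \circ \Phi^{-1}$, the face $Q = \partial D^3_\delta \cap g^{-1}(Y)$ is mapped submersively by (G3) as required by condition (3) of that definition, and the corners of $W$ run along $\partial M_0 \cup \partial M_1$. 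Since $\Phi$ and $\varphi$ are only homeomorphisms, $W$ and $Y$ are a priori only topological, so I would invoke \fullref{rmk:top} to read $g|_W$ as a topological fold cobordism.

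Invariance of the integer under \fullref{dfn:cob-b} then equates $-||\tilde{\mathrm{I}}^0_{\mathrm{o}}|| + ||\tilde{\mathrm{I}}^0_{\mathrm{e}}||$ for $g_\partial$ with the corresponding count for $\varphi \circ g'_\partial \circ \Phi^{-1}$; because $\varphi$ preserves the orientation of $\R^2$, the latter count equals that for $g'_\partial$, so the integer is a topological $\mathcal{A}_+$--invariant. To identify it with the number of cusps of $\tilde{g}$, I would compare $g$ with a stable perturbation $\tilde{g}$ over the annular region $\Int D^2_\varepsilon \setminus D^2_{\varepsilon/2}$: conditions (G2) and (G4) make the two restrictions topologically $\mathcal{A}_+$--equivalent, so they yield the same integer, while the cusp-count identity recorded before the statement shows this integer is precisely the algebraic number of cusps of $\tilde{g}$. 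The final assertion about $\mathcal{A}$--equivalence follows since reversing the orientation of $\R^2$ reverses every co-orientation fixed via \fullref{fig7} and thus flips the sign of the algebraic number of cusps.

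The step I expect to be the main obstacle is the geometric verification that $g|_W$ genuinely fits \fullref{dfn:cob-b} --- in particular, checking that the submersive face $Q$ and the corners along $\partial M_0 \cup \partial M_1$ behave as required, and that (G1)--(G4) really supply the product structure along the $M_i$ demanded by condition (4) of that definition. Once this is in place the invariance and the passage to $\tilde{g}$ are formal, exactly parallel to \fullref{thm:inv}; the only remaining subtlety is the consistent bookkeeping of co-orientations needed so that orientation-preservation of $\varphi$ yields an equality of signed counts rather than merely an equality up to sign.
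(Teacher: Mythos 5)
Your proposal follows essentially the same route as the paper's own proof: build the topological fold cobordism $g|_W \co W \to Y$ over the annulus $Y = D^2_\varepsilon \setminus \varphi(\Int D^2_{\varepsilon'})$ in the sense of \fullref{dfn:cob-b} (via \fullref{rmk:top}), use invariance of the signed fiber counts and orientation-preservation of $\varphi$, and then compare with the stable perturbation over an annular region to invoke the cusp-count identity. The only slight imprecision is that the inner boundary face of $W$ is $g^{-1}(\varphi(S^1_{\varepsilon'})) \cap D^3_\delta$, which carries the map the paper calls $\bar{g}_\partial$ rather than $\varphi \circ g'_\partial \circ \Phi^{-1}$ itself; the paper bridges this by choosing $\delta', \varepsilon'$ so that $\Phi(g'^{-1}(D^2_{\varepsilon'}) \cap D^3_{\delta'}) \subset g^{-1}(\Int D^2_\varepsilon) \cap \Int D^3_\delta$ and observing that the two maps have $C^0$ equivalent fibers over every point, which is exactly the verification you flagged as the remaining obstacle.
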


\begin{proof}
Let $g' \co (\R^3, 0) \to (\R^2, 0)$ be a smooth
map germ which is generic and is topologically
$\mathcal{A}_+$--equivalent to $g$.
Thus there exist homeomorphism germs
$\Phi \co (\R^3, 0) \to (\R^3, 0)$ and
$\varphi \co (\R^2, 0) \to (\R^2, 0)$
such that $g' = \varphi^{-1} \circ g \circ \Phi$
and $\varphi$ is orientation preserving.

Let $\delta$ and $\varepsilon$ (resp.\ 
$\delta'$ and $\varepsilon'$) be small
positive real numbers as in \fullref{dfn:generic}
for $g$ (resp.\ for $g'$). 
We can take $\delta'$ and $\varepsilon'$
sufficiently small so that
$\varphi(D^2_{\varepsilon'})
\subset \Int{D^2_\varepsilon}$
and $\Phi(g'^{-1}(D^2_{\varepsilon'}) \cap D^3_{\delta'})
\subset g^{-1}(\Int{D^2_\varepsilon}) \cap 
\Int{D^3_\delta}$.

Set 
$$\bar{g}_{\partial}
= g|_{g^{-1}(\varphi(S^1_{\varepsilon'})) \cap D^3_\delta}
\co g^{-1}(\varphi(S^1_{\varepsilon'})) \cap D^3_\delta
\to \varphi(S^1_{\varepsilon'}).$$
It is easy to observe that for each 
$y \in \varphi(S^1_{\varepsilon'})$,
the fiber
of $\bar{g}_\partial$ over $y$ and that of
$$\varphi \circ g'_\partial \circ \Phi^{-1} \co
\Phi(g'^{-1}(S^1_{\varepsilon'}) \cap D^3_{\delta'})
\to \varphi(S^1_{\varepsilon'})$$
over $y$ is $C^0$ equivalent.

Set $Y = D^2_\varepsilon \setminus
\varphi(\Int{D^2_{\varepsilon'}})$
and $W = g^{-1}(Y) \cap D^3_\delta$. We see
that $Y$ is homeomorphic
to $S^1 \times [0, 1]$
and $g|_W \co W \to Y$ gives a (fold)
cobordism between $g_\partial$
and $\bar{g}_\partial$ in the sense of
\fullref{dfn:cob-b} (see also
\fullref{rmk:top}).
(Note that the boundary causes no
problem by virtue of condition (G3)
of \fullref{dfn:generic}.)

Then by the same argument as in the proof
of \fullref{thm:inv} together with
the above results, we get the desired result.
\end{proof}

Now \fullref{thm:new} follows from
Propositions~\ref{thm:inv} and \ref{thm:inv2}.

Compare \fullref{thm:new} with the
results obtained in \cite{FI,FBS,BS,Ohsumi},
etc. It would be an interesting problem
to find a formula expressing the algebraic
number of cusps of a $C^\infty$ stable
perturbation in algebraic terms: e.g.\ as
the signature of a certain quadratic form
associated with a generic map germ.

It would also be interesting to find
topological invariants of generic
smooth map germs $(\R^n, 0) \to
(\R^p, 0)$ with $n > p$ arising from
the number of certain singular fibers
of a stable (or Thom--Boardman generic)
perturbation.

\bibliographystyle{gtart}
\bibliography{link}

\end{document}